\title{Free Inhomogeneous Wreath Product of Quantum Groups}
\author[J. \MakeLowercase{\textsc{van}} Dobben de Bruyn]{Josse \MakeLowercase{\textsc{van}} Dobben de Bruyn}
\address{Technical University of Denmark}
\curraddr{Charles University}
\email{josse.van-dobben-de-bruyn@matfyz.cuni.cz}
\thanks{Josse van Dobben de Bruyn, Prem Nigam Kar, David E. Roberson and Peter Zeman are supported by Carlsberg Semper Ardens Accelerate CF21-0682 Quantum Graph Theory.}
\author[A. Freslon]{Amaury Freslon}
\address{Université Paris-Saclay}
\email{amaury.freslon@universite-paris-saclay.fr}
\author[P.N. Kar]{Prem Nigam Kar}
\address{Technical University of Denmark}
\email{pkar@dtu.dk}
\author[D.E. Roberson]{David E. Roberson}
\address{Technical University of Denmark}
\address{QMATH, University of Copenhagen}
\email{dero@dtu.dk}
\author[P. Zeman]{Peter Zeman}
\address{Technical University of Denmark}
\address{Department of Algebra, Faculty of Mathematics and Physics, Charles University}
\email{peter.zeman@matfyz.cuni.cz}
\thanks{Peter Zeman was funded by the European Union (ERC, POCOCOP, 101071674). Views and opinions expressed are however those of the author(s) only and do not necessarily reflect those of the European Union or the European Research Council Executive Agency. Neither the European Union nor the granting authority can be held responsible for them.}
\subjclass[2020]{Primary 46L67; Secondary 05C60, 05C25, 05E99}
\keywords{Compact quantum group, free wreath product, inhomogeneous wreath product, quantum automorphism group, finite graph, block-cut tree, outerplanar graph}
\newcommand{\fwr}{\mathbin{\wr_\ast}}
\newcommand{\gwr}{\mathbin{\wr\wr}}
\newcommand{\fgwr}{\mathbin{\gwr_\ast}}
\DeclarePairedDelimiter\card{\lvert}{\rvert}
\DeclareMathOperator{\Qut}{Qut}
\DeclareMathOperator{\Aut}{Aut}
\DeclareMathOperator{\QIso}{QIso}
\DeclareMathOperator{\rel}{rel}
\DeclareMathOperator{\lev}{lev}
\DeclareMathOperator{\dist}{dist}
\DeclareMathOperator{\wl}{WL}
\DeclareMathOperator{\conn}{conn}
\newcommand{\bbC}{{\mathbb{C}}}
\newcommand{\bbF}{{\mathbb{F}}}
\newcommand{\bbG}{{\mathbb{G}}}
\newcommand{\bbH}{{\mathbb{H}}}
\newcommand{\bbN}{{\mathbb{N}}}
\newcommand{\bbS}{{\mathbb{S}}}
\newcommand{\bbX}{{\mathbb{X}}}
\def\calA{{\mathcal A}} \def\calB{{\mathcal B}} \def\calC{{\mathcal C}}
\def\calG{{\mathcal G}}
\def\calS{{\mathcal S}} \def\calT{{\mathcal T}}
\DeclareFontFamily{U}{mathb}{\hyphenchar\font45}
\DeclareFontShape{U}{mathb}{m}{n}{
<5> <6> <7> <8> <9> <10> gen * mathb
<10.95> mathb10 <12> <14.4> <17.28> <20.74> <24.88> mathb12
}{}
\DeclareSymbolFont{mathb}{U}{mathb}{m}{n}
\DeclareMathSymbol{\bigastglyph}{2}{mathb}{"06}
\DeclareMathOperator*{\normalbigast}{\bigastglyph}
\DeclareRobustCommand\bigop[2][\displaystyle]{%
  \mathop{\vphantom{#1\sum}\mathpalette\bigop@{#2}}\slimits@
}
\newcommand{\bigop@}[2]{%
  \vcenter{%
    \sbox\z@{$#1\sum$}%
    \hbox{\resizebox{\ifx#1\displaystyle.9\fi\dimexpr\ht\z@+\dp\z@}{!}{$\m@th#2$}}%
  }%
}
\newcommand{\displaybigast}{\DOTSB\bigop{\bigastglyph}}
\newcommand{\textbigast}{\DOTSB\bigop[\textstyle]{\bigastglyph}}
\DeclareMathOperator*{\bigast}{\mathchoice{\displaybigast}{\textbigast}{\normalbigast}{\normalbigast}}
\def\oversortoftilde#1{\mathop{\vbox{\m@th\ialign{##\crcr\noalign{\kern3\p@}%
      \sortoftildefill\crcr\noalign{\kern3\p@\nointerlineskip}%
      $\hfil\displaystyle{#1}\hfil$\crcr}}}\limits}
\def\sortoftildefill{$\m@th \setbox\z@\hbox{$\braceld$}%
  \braceld\leaders\vrule \@height\ht\z@ \@depth\z@\hfill\braceru$}
\newlength{\depthofsumsign}
\newlength{\totalheightofsumsign}
\newlength{\heightanddepthofargument}
\newcommand*{\DivideLengths}[2]{%
  \strip@pt\dimexpr\number\numexpr\number\dimexpr#1\relax*65536/\number\dimexpr#2\relax\relax sp\relax
}
\DeclareSymbolFont{bbold}{U}{bbold}{m}{n}
\DeclareSymbolFontAlphabet{\mathbbold}{bbold}
\newcommand{\one}{\ensuremath{\mathbbold{1}}}
\declaretheorem[style=definition,numberwithin=section]{definition}
\declaretheorem[style=definition,numberlike=definition]{construction}
\declaretheorem[style=plain,numberlike=definition]{theorem}
\declaretheorem[style=plain,numberlike=definition]{lemma}
\declaretheorem[style=plain,numberlike=definition]{proposition}
\declaretheorem[style=plain,numberlike=definition]{corollary}
\declaretheorem[style=definition,numberlike=definition]{remark}
\colorlet{Amaury_colour}{blue!80!black}
\colorlet{David_color}{purple!80!black}
\colorlet{Josse_colour}{orange!70!black}
\colorlet{Prem_colour}{red!80!black}
\colorlet{Peter_colour}{green!75!black}
\colorlet{TODO_colour}{pink!80!black}
\begin{document}

\begin{abstract}
	We introduce the free inhomogeneous wreath product of compact matrix quantum groups, which generalizes the free wreath product (Bichon 2004).
	We use this to present a general technique to determine quantum automorphism groups of connected graphs in terms of their maximal biconnected subgraphs, provided that we have sufficient information about their quantum automorphism groups.
	We show that this requirement is met for outerplanar graphs, leading to algorithms to compute the quantum automorphism groups of these graphs, as well as recovering results for forests and block graphs.
\end{abstract}

\maketitle

\section{Introduction}
In \cite{bichon2004free}, Bichon defined the \emph{free wreath product} $\bbG \wr_* \bbS_n^+$ of a compact quantum group $\bbG$ by the quantum symmetric group $\bbS_n^+$.
This is the quantum analogue of the wreath product of classical groups.
The free wreath product of a compact quantum group $\bbG$ by a quantum permutation group $\bbH \leq \bbS_n^+$ is characterized by the following property: whenever $\bbG$ acts on a quantum set $\bbX$, the free wreath product $\bbG \wr_* \bbH$ acts on $n$ disjoint copies of $\bbX$ by independently acting with $\bbG$ on each copy of $\bbX$, while also acting with $\bbH$ to quantum permute the different copies of $\bbX$.

Apart from receiving interest from researchers in quantum groups and operator algebras (e.g.~\cite{Banica-Vergnioux,Tarrago-Wahl,Fima-Troupel-OA,Brownlowe-Robertson}), the free wreath product also has several applications, in particular to quantum automorphism groups of graphs \cite{bichon2004free,Q_Aut_Trees,Quantum_Sabidussi,meunier2023quantum,Brownlowe-Robertson} and free probability \cite{Banica-et-al-Bessel,Lemeux-Tarrago}.
However, in other settings, the free wreath product is too restrictive and cannot be used.
Several extensions and generalizations of the free wreath product have been given; see for instance \cite{Fima-Pittau,Freslon-Skalski,Freslon-amalgamation,Fima-Troupel-generalized}.

In this paper, we present another generalization that we call the \emph{free inhomogeneous wreath product}, named after its classical counterpart \cite{inhom_wr_prod}.
Here we have a slightly more general setting than the one outlined above.
We still have a quantum permutation group $\bbH \leq \bbS_n^+$, but now we have for each orbit $\Omega_i \subseteq [n]$ of $\bbH$ a separate compact matrix quantum group $\bbG_i$.
The free inhomogeneous wreath product is then a compact quantum group, with the following property: if all the quantum groups $\bbG_i$ act on finite (classical) sets $X_i$, the free inhomogeneous wreath product $(\bbG_1, \ldots, \bbG_m) \fgwr \bbH$ acts on the disjoint union $\bigsqcup_{i=1}^m \bigsqcup_{\alpha \in \Omega_i} X_i$ by independently acting with the respective $\bbG_i$ on the individual copies of the $X_i$, while also acting with $\bbH$ to quantum permute the different copies of the $X_i$.
In other words, we generalize the free wreath product by allowing the quantum group $\bbG$ to differ between different orbits of $\bbH$.

Our first result is the definition of the free inhomogeneous wreath product ${(\bbG_1, \ldots, \bbG_m) \fgwr \bbH}$, and the proof that it is a compact quantum group.
We note that it contains two well-known products of quantum groups as special cases: the free wreath product (by setting $\bbG_1 = \cdots = \bbG_m$), but also the free product (by setting $\bbH = 1$).
Therefore, it is not just another generalization of the free wreath product, but also a new product of quantum groups that is interesting in its own right.

After defining the free inhomogeneous wreath product and proving some basic properties, the remainder of this paper is dedicated to applications in graph theory.

First, in \cref{subsec:disjoint-union}, we revisit disjoint unions of graphs.
If $X_1,\ldots,X_n$ is a finite collection of graphs such that for all $i,j \in [n]$, the graphs $X_i$ and $X_j$ are either isomorphic or not quantum isomorphic%
	\footnote{When some of the pairs $X_i,X_j$ are quantum isomorphic but not isomorphic, $\Qut(X_1 \sqcup \cdots \sqcup X_n)$ depends not only on $\Qut(X_1),\ldots,\Qut(X_n)$ but also on the quantum isomorphism algebras $\QIso(X_i,X_j)$, so the free inhomogeneous wreath product cannot be used.
	See \autoref{rmk:quantum-isomorphic-pair} and \cite[Theorem 5.3]{Quantum_Sabidussi} for more details.}%
, then we show that $\Qut(X_1 \sqcup \cdots \sqcup X_n)$ is isomorphic to a free inhomogeneous wreath product of the $\Qut(X_i)$ by the quantum permutation group that quantum permutes the sets of isomorphic graphs (among the $X_i)$.
A precise statement of this result can be found in \autoref{thm:disjoint-union}.

Next, we move on to a more advanced application.
Apart from the decomposition of a graph into its connected components, every connected graph can be further decomposed into its \emph{blocks} (also known as \emph{biconnected components}).
Every edge in a graph belongs to exactly one block, and the blocks and cut vertices together form a tree, called the \emph{block tree} (or \emph{block-cut tree}).
Using a mix of combinatorial and quantum group techniques, we show that the quantum automorphisms of a connected graph $X$ quantum permute the blocks of $X$ and restrict to quantum automorphisms of the block tree of $X$. This provides an alternate approach to some of the results of \cite{freslon2025block}.

As a consequence, we prove that $\Qut(X)$ can be described recursively in terms of free inhomogeneous wreath products of the quantum automorphism groups of the blocks further down the block tree, provided that the rooted subgraphs of $X$ that appear in the process are either isomorphic or not quantum isomorphic. 

Finally, we exhibit some examples of graphs for which this condition is met.
In doing so, we derive algorithms to compute the quantum automorphism groups of two classes of graphs: \emph{block graphs} (in which every block is a clique) and \emph{outerplanar graphs} (that can be drawn in the plane without crossing edges in such a way that all vertices lie on the outer face).
This is one of the largest class of graphs for which we know how to compute the quantum automorphism groups; previously such techniques were known for forests \cite{Q_Aut_Trees,meunier2023quantum}, tree-cographs \cite{meunier2023quantum}, and certain particular families of graphs (e.g. \cite{Schmidt-distance-transitive, gromada2023quantum}). At the same time as the present work, quantum automorphism groups of block-cographs (a class which strictly contains block graphs) were also computed in \cite{FMPbis2025} as part of a general study of quantum properties of 0-hyperbolic graphs.

\section{Preliminaries}

\subsection{Graph theory}
\label{subsec:prelim:graphs}

Throughout this article, all graphs are assumed to be finite and \emph{simple} (no parallel edges and no self-loops).
For a graph $X$, we denote the number of connected components by $\conn(X)$.
A \emph{cut vertex} is a vertex $v \in V(X)$ such that $\conn(X \setminus \{v\}) > \conn(X)$.
A \emph{block} (or \emph{biconnected component}) is a maximal subgraph $B \subseteq X$ that is connected and remains connected after removing any one vertex of $B$.
Every edge of $X$ belongs to exactly one block, and two edges belong to the same block if and only if they lie on a common cycle (see e.g.~\cite[\S{}5.2]{Bondy-Murty}).
A graph $X$ is called \emph{biconnected} if it has exactly one block.

Assume now that $X$ is connected.
Let $\calB(X)$ be the set of blocks of $X$ and let $\calC(X)$ be the set of cut vertices in $X$.
The \emph{block tree $T(X)$ of $X$} is a bipartite graph with vertex set
$$V(T(X)) = \calB(X) \sqcup \calC(X)$$
and there is an edge $\{B,v\} \in E(T(X))$ if $v \in V(B)$.
It is easy to see that $T(X)$ is in fact a tree.
We refer to the vertices $V(T(X))$ of $T(X)$ as \emph{nodes} to avoid confusion.
There are two types of nodes: \emph{block nodes} $B \in \calB(X)$ and \emph{cut nodes} $v \in \calC(X)$.

The \emph{distance} $\dist(v,w)$ between two vertices is the length of a shortest path between $v$ and $w$, where $\dist(v,w) = \infty$ if $v$ and $w$ belong to different connected components.
The \emph{eccentricity} of a vertex $v \in V(X)$ is defined as $\epsilon(v) := \max_{w \in V(X)} \dist(v,w)$; that is, the maximum distance from $v$ to any other vertex in $V(X)$.
The \emph{(Jordan) center} $Z(X)$ of a graph $X$ is the set of vertices with minimum eccentricity.

It is well known that the Jordan center of a tree $T$ always has size $1$ or $2$, and that it coincides with the midpoint(s) of every longest path in $T$.
In the case of a block tree $T(X)$ of a connected graph $X$, the center $Z(T(X))$ always has size $1$.
To see this, note that every cut vertex in $X$ belongs to at least two blocks, so all leaves of $T(X)$ are block nodes.
Therefore every longest path in $T(X)$ has an odd number of vertices (alternating between block nodes and cut nodes, starting and ending with a block node), so its midpoint is a single vertex.

If the center of the block tree $T$ is a cut node $v\in \calC(X)$, then we say that $v$ is the \emph{central cut vertex} of $X$.
If the center of the tree $T$ is a block node $B \in \calB(X)$, then we say that $B$ is the \emph{central block} of $X$.
We consider the tree $T$ to be a rooted tree, where the root is its center.
Given any node $N$ of the block tree $T$, the subtree of $T$ rooted at $N$ and consisting in all nodes of $T$ that, when starting at the root, require you to travel through $N$ to visit, defines a unique induced subgraph of $X$, which we denote by $X^{\leq N}$. 

For a block $B \in \calB(X)$, we say that a cut vertex $v\in V(B)\cap\calC(X)$ is a \emph{child cut vertex} of $B$ if $v$ is farther than $B$ from the root of the block tree $T$.
Otherwise, $v$ is a \emph{parent cut vertex} of $B$.
Note that a central block has only children cut vertices and a non-central block has a unique parent cut vertex.
Analogously, we define the \emph{child blocks} and the \emph{parent block} of a cut vertex.

\subsection{Compact Matrix Quantum Groups}  

In this subsection, we provide some of the necessary background on compact matrix quantum groups, referring the reader to~\cite{neshveyev_tuset, freslon2023compact} for a comprehensive exposition. We begin with the definition of a compact quantum group. A \emph{compact quantum group} $\bbG$ is an ordered pair $(\mathcal{A},\Delta)$, where $\mathcal{A}$ is a unital $C^*$\nobreakdash-algebra and $\Delta: \mathcal{A} \to \mathcal{A} \otimes \mathcal{A}$ is a unital $*$\nobreakdash-homomorphism (the symbol $\otimes$ will always denote the minimal tensor product of C*-algebras), known as the \emph{comultiplication} of $\bbG$, satisfying the following conditions:
	\begin{enumerate}
		\item \emph{co-associativity:} $(\Delta \otimes id) \Delta = (id \otimes \Delta) \Delta$;
		\item \emph{cancellation property:} the sets $\Delta(\mathcal{A})(1 \otimes \mathcal{A}) $ and $\Delta(\mathcal{A})(\mathcal{A} \otimes 1)$ are dense in $\mathcal{A} \otimes \mathcal{A}$.
	\end{enumerate}

Let $G$ be a compact group and $C(G)$ denote the set of all continuous complex-valued functions on $G$.
Then, one can define a $\ast$-homomorphism $\Delta$ from $C(G)$ to $C(G) \otimes C(G) \cong C(G \times G)$ as follows: $\Delta(f)(g,h) = f(gh)$, for $(g,h) \in G\times G$.
Then the ordered pair $(C(G), \Delta)$ is a compact quantum group, and both the set $G$ and the multiplication of $G$ can be recovered from $(C(G), \Delta)$.
Moreover, every compact quantum group $(\calA, \Delta)$ with $\calA$ commutative is of this type.
In light of this example, given a compact quantum group $\bbG = (\calA, \Delta)$, it is customary to denote the $C^*$-algebra $\calA$ as $C(\bbG)$. 

We will be interested in a specific family of compact quantum groups, which we now introduce.

\begin{definition}
    Let $C(\bbG)$ be a unital $C^*$\nobreakdash-algebra generated by $\{u_{ij}:i,j\in [n]\}$ such that the matrices $[u_{ij}]_{i,j=1}^n$ and $[u_{ij}^*]_{i,j=1}^n$ are invertible and $\Delta: C(\bbG) \to C(\bbG) \otimes C(\bbG)$ is a unital $*$\nobreakdash-homomorphism satisfying
	\begin{equation}
		\Delta(u_{ij}) = \sum_{k=1}^n u_{ik}\otimes u_{kj}, \qquad i,j\in [n]. \label{eqdef:comult-map}
	\end{equation}
	Then, $\bbG = (C(\bbG), \Delta)$ forms a compact quantum group (see {\cite[Proposition 1.1.4]{neshveyev_tuset}}). Such compact quantum groups are known as \emph{compact matrix quantum groups}. They are usually specified by the pair $\bbG = (C(\bbG), u)$, as the comultiplication only depends on $u$. The matrix $u$ will be called the \emph{fundamental representation} of the compact matrix quantum group $\bbG$.
\end{definition}

The prototypical example of a compact matrix quantum group is a compact matrix group. Indeed, given a compact matrix group $G\subseteq GL_n(\bbC)$, the set of continuous complex-valued functions on $G$ is generated by the functions $\{u_{ij}\}_{i,j \in [n]}$, where $u_{ij}(g)$ is the $(i,j)$-th coordinate of the matrix $g\in G$. It is easy to see that the matrices $[u_{ij}]_{i,j=1}^n$ and $[u_{ij}^*]_{i,j=1}^n$ are invertible. One can also show that $\Delta: C(G) \to C(G) \otimes C(G)$ defined by $u_{ij} \to \sum_{k} u_{ik} \otimes u_{kj}$ is a $\ast$-homomorphism, thus turning $(C(G),u)$ into a compact matrix quantum group. One can also show that any compact matrix quantum group $\bbG = (C(\bbG), \Delta)$, where $C(\bbG)$ is commutative, is of this type. 

We now give an example of a genuinely quantum compact matrix quantum group; that is, one with a non-commutative $C^*$-algebra 
This example is particularly important to us because the quantum automorphism group of a graph, defined in the next subsection, is a quotient of this quantum group. It was introduced in~\cite{Wang}.

\begin{definition}
	\label{def:comp-mat-quantum-group}

	Let $C(\bbS_n^+)$ be the universal $C^*$-algebra generated by $\{u_{ij}\}_{i,j \in [n]}$ satisfying the following conditions:
	\begin{enumerate}[label = \roman*.]
		\item $u_{ij} = u_{ij}^* = u_{ij}^2$, i.e., the $u_{i,j}$ are self-adjoint projections. 
		\item $\sum_{j'=1}^n u_{ij'} = \sum_{i'=1}^n u_{i'j} = 1$ for all $i,j \in [n]$. 
	\end{enumerate}
	Then, $\bbS_n^+ = (C(\bbS_n^+), [u_{ij}])$ is a compact matrix quantum group known as the \emph{quantum symmetric group} acting on $n$ points.
\end{definition}

It is also shown in~\cite{Wang} that $\bbS_n^+$ is the universal quantum group acting on $n$ points.
Given any $C^*$-algebra $\calA$, a matrix $u = [u_{ij}] \in M_n(\mathcal{A})$ satisfying \textit{i, ii} in \cref{def:comp-mat-quantum-group} is known as a \emph{magic unitary}.
A \emph{quantum permutation group} acting on $n$ points, is a compact matrix quantum group $\bbG = (C(\bbG), v)$, such that the fundamental representation $v \in M_n(C(\bbG))$ is a magic unitary. Similarly, when we write a quantum permutation group acting on a set $\Omega$, we mean a quantum permutation group acting on $\left\lvert \Omega\right\rvert$.

For such a quantum permutation group $\bbG$, by the universal property of $C(\bbS_n^+)$, we see that there is a surjective $*$-homomorphism $\pi: C(\bbS_n^+) \to C(\bbG)$ mapping $u_{ij}$ to $v_{ij}$ for all $i,j \in [n]$.
In addition to this, if we denote the comultiplications of $\bbG$ and $\bbS_n^+$ by $\Delta_\bbG$ and $\Delta_{\bbS_n^+}$ respectively, then
\begin{equation*}
    \Delta_\bbG \circ \rho = ( \rho \otimes \rho ) \circ \Delta_{\bbS_n^+}.
\end{equation*}
In other words, $\bbG$ is a \emph{quantum subgroup} of $\bbS_n^+$. 

The classical notion of orbits and orbitals can be extended to quantum permutation groups in a natural way. Given a quantum permutation group $\bbG = (C(\bbG), u)$ acting on $n$ points,~\cite{lupini2020nonlocal} introduced the following relations: 
\begin{itemize}
    \item $i \sim_1 j$ if $u_{ij} \neq 0$, for $i,j \in [n]$ and
    \item $(i,j) \sim_2 (k,l)$ if $u_{ik}u_{jl} \neq 0$ for $(i,j), (k,l) \in [n] \times [n]$.
\end{itemize}
These can be shown to be equivalence relations; the equivalence classes of $[n]$ induced by $\sim_1$ are known as the \emph{orbits} of $\bbG$ and the equivalence classes of $[n] \times [n]$ induced by $\sim_2$ are known as the \emph{orbitals} of $\bbG$.

\subsection{Quantum Automorphism Groups of Graphs}
This subsection is dedicated to introducing quantum automorphism groups of graphs, which will serve as the primary avenue for application of the free inhomogeneous wreath product. 

Let $X$ be a finite graph on $n$ vertices. The quantum automorphism group of $X$, denoted by $\Qut(X)$, was defined by Banica~\cite{banica2005quantum} to be the quantum permutation group $\Qut(X) = (C(\Qut(X)), u)$, where $C(\Qut(X))$ is the universal $C^*$-algebra generated by $\{u_{ij}\}_{i,j \in [n]}$ subject to the following conditions: 
\begin{enumerate}[label = \roman*.]
    \item $u$ is a magic unitary and
    \item $A_X u = u A_X$, where $A_X$ is the adjacency matrix of $X$.
\end{enumerate}

If $X$ is a vertex coloured graph with colouring $c: V(X) \to \{1,2,\dots,k\}$, for some $k \in \bbN$, then the quantum automorphism group of the coloured graph $X$, denoted by $\Qut_c(X)$, is the quantum permutation group $\Qut_c(X) = (C(\Qut_c(X)), u)$, where $C(\Qut_c(X))$ is the universal $C^*$-algebra generated by $\{u_{ij}\}_{i,j \in }$ such that: 
\begin{enumerate}[label = \roman*.]
    \item $u$ is a magic unitary,
    \item $A_X u = u A_X$, and
    \item $u_{ij} = 0$, whenever $c(i) \neq c(j)$.
\end{enumerate}

We will have to check several times in the sequel the commutation relation with the adjacency matrix, and we now give an equivalent property which will prove more practical. Using notations from \cite{lupini2020nonlocal}, we write $\mathrm{rel}(i, k) = \mathrm{rel}(j, l)$ if one of the following holds : $i = k$ and $j = l$, $i\sim k$ and $j\sim l$, $i\nsim k$ and $j\nsim l$. 

Let $X$ be a coloured graph, with colouring $c: V(X)\to [m]$, and let $x \in V(X)$, be a vertex. By the \emph{quantum stabiliser} of $x$, we mean the quantum subgroup $\Qut_c(X)_x$ of $\Qut(X)$, which is the quantum automorphism group of the coloured graph $X$, $\Qut_{c'}(X)$, with colouring $c': V(X) \to [m+1]$ defined as follows: 
\begin{align*}
    c(y) = \begin{cases}
        m+1 & \text{ if } y = x, \\
        c(y) & \text{ otherwise.}
    \end{cases}
\end{align*}
In other words, if $u$ is the fundamental representation of $\Qut_c(X)$, $C(\Qut_c(X)_x)$ is the quotient of $C(\Qut_c(X))$ by the relation $u_{xx}=1$, and the fundamental representation of $\Qut_c(X)_x$ is $[q(u_{xy})]_{x,y\in V(X)}$, where $q: C(\Qut_c(X)) \to C(\Qut_c(X)_x)$ is the quotient map. In particular, it should be noted that if $u_{xx}=1$, then $\Qut_c(X)_x \cong \Qut_c(X)$. 
\begin{lemma}
	\label{lem:commutation_adjacency}
	Let $u$ be a magic unitary and let $X$ be a graph.
	Then, $uA_{X} = A_{X}u$ if and only if, for any four vertices $i, j, k, l$, we have
	\begin{equation*}
		u_{ij}u_{kl} = 0
	\end{equation*}
	whenever $\mathrm{rel}(i, k)\neq \mathrm{rel}(j, l)$.
\end{lemma}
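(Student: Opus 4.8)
The plan is to unwind both conditions into statements about the projection entries $u_{ij}$ and prove the two implications separately. First I would record the entrywise form of the commutation relation: writing $A = A_X$, the $(i,l)$ entry of $uA = Au$ reads
\[
\sum_{k \,:\, k \sim l} u_{ik} \;=\; \sum_{k \,:\, k \sim i} u_{kl},
\]
so $uA = Au$ is equivalent to this family of identities ranging over all $i,l$. I would also observe that, since $\mathrm{rel}$ takes exactly three values (equality, adjacency, non-adjacency), the mismatches $\mathrm{rel}(i,k) \neq \mathrm{rel}(j,l)$ that involve the equality class are already forced by the magic unitary relations: if $i = k$ but $j \neq l$ then $u_{ij}u_{il} = \delta_{jl}u_{ij} = 0$, and if $j = l$ but $i \neq k$ then $u_{ij}u_{kj} = \delta_{ik}u_{ij} = 0$. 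Hence the genuine content of the vanishing condition lies in the two edge/non-edge mismatches: $i \sim k$ with $j \nsim l$ (and $j \neq l$), and $i \nsim k$ (with $i \neq k$) with $j \sim l$.

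For the direction \emph{vanishing $\Rightarrow$ commutation}, I would fix $i,l$ and set $P = \sum_{k \sim l} u_{ik}$ and $Q = \sum_{k \sim i} u_{kl}$. Both are projections, being sums of pairwise orthogonal entries within a fixed row and a fixed column respectively, and the goal is $P = Q$. Using $\sum_k u_{kl} = 1$ I would write $1 - Q = u_{il} + \sum_{k \nsim i,\, k \neq i} u_{kl}$ and compute $P(1-Q)$: the term $P\,u_{il}$ vanishes by row orthogonality, while each $P\,u_{kl}$ with $k \nsim i$, $k \neq i$ vanishes because every summand $u_{im}u_{kl}$ (with $m \sim l$) has $\mathrm{rel}(i,k)$ non-adjacent but $\mathrm{rel}(m,l)$ adjacent, so it is killed by hypothesis. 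Thus $P = PQ$, and the mirror computation using $1 - P = u_{il} + \sum_{m \nsim l,\, m \neq l} u_{im}$ gives $Q = QP$; as $P,Q$ are self-adjoint projections, this forces $P = Q$, which is precisely the $(i,l)$ entry of the commutation relation.

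For the converse \emph{commutation $\Rightarrow$ vanishing}, I would start from the entrywise identity above and left-multiply by $u_{ij}$. Row orthogonality collapses the left-hand side to $[\,j \sim l\,]\,u_{ij}$, so when $j \nsim l$ (the first mismatch, with $i \sim k$ still to be used) I obtain $\sum_{k' \sim i} u_{ij} u_{k'l} = 0$. Here is the step that needs the most care: a vanishing sum of products, which are not individually positive, does not obviously force each term to vanish. I would resolve this by multiplying on the right by $u_{ij}$ as well, so that each summand becomes $u_{ij} u_{k'l} u_{ij} = (u_{k'l}u_{ij})^{*}(u_{k'l}u_{ij}) \geq 0$; a sum of positive elements equal to zero has every term zero, whence $u_{k'l}u_{ij} = 0$ and so $u_{ij}u_{k'l} = 0$ for all $k' \sim i$, in particular for $k' = k$ since $i \sim k$. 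The remaining mismatch ($i \nsim k$, $j \sim l$) then follows by applying this result to the transpose $u^{T}$, which is again a magic unitary and satisfies $u^{T}A = Au^{T}$ because $A$ is symmetric; unwinding $(u^{T})_{ij} = u_{ji}$ reproduces exactly the missing vanishing relations.

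The main obstacle I anticipate is precisely this positivity argument in the converse: extracting the vanishing of a single product $u_{ij}u_{kl}$ from one scalar identity coming from an entry of $uA = Au$. The sandwiching trick $b \mapsto b^{*}b$ is what makes this work, and everything else reduces to bookkeeping with the magic-unitary orthogonality relations and the three-way case analysis for $\mathrm{rel}$.
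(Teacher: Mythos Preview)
Your proof is correct. The paper does not actually give a proof of this lemma; it is stated as a known equivalence (in the notation of \cite{lupini2020nonlocal}) and used freely thereafter. So there is no ``paper's own proof'' to compare against.

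That said, your argument is sound in both directions. The reduction of the equality cases to the magic-unitary row/column orthogonality is correct, and your handling of the two genuine edge/non-edge mismatches is clean. In the forward direction, the computation $P(1-Q)=0$ and $Q(1-P)=0$ is right, and for self-adjoint projections $P=PQ$ together with $Q=QP$ indeed forces $P=Q$ (take adjoints of $P=PQ$ to get $P=QP=Q$). In the converse, the sandwiching trick is exactly what is needed: $u_{ij}u_{kl}u_{ij}=(u_{kl}u_{ij})^{*}(u_{kl}u_{ij})\geq 0$ in the ambient $C^{*}$-algebra, and a vanishing finite sum of positive elements forces each summand to vanish, whence $u_{kl}u_{ij}=0$. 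The transpose argument for the remaining mismatch is also fine, since $u^{T}$ is again a magic unitary and $A_{X}$ is symmetric.
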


Given a (coloured) graph $X$, we shall sometimes refer to the orbits/orbitals of $\Qut(X)$ as the \emph{quantum orbits/orbitals of $X$}.

We now give a brief overview of the Weisfeiler-Leman algorithm, which will be used frequently in this article as a heuristic for quantum isomorphism of graphs. 

Given a graph $X$, the Weisfeiler--Leman algorithm \cite{Weisfeiler} constructs a colouring $\wl_X : V(X) \times V(X) \to C$ via the following iterative procedure:

First, construct the initial colouring
\[ c_0(x, y) \coloneqq
	\begin{cases}
		0 \quad \text{if $x = y$,}\\
		1 \quad \text{if $xy \in E(X)$,}\\
		2 \quad \text{if $xy \in E(\overline{X})$.}
	\end{cases} \]
Then, we repeat the iterate procedure.
Given a colouring $c_k : V(X) \times V(X) \to C$, for every pair of colours $i,j \in C$ we define
\[ \Delta_{ij}(x,y) \coloneqq |\{z \in V(X) : c(x, z) = i \text{ and } c(z, y) = j\}|. \]
Then the \emph{refinement} of $c_k$ is the colouring $c_{k+1}$ given by
\[ c_{k+1}(x,y) \coloneqq (c_k(x,y), (\Delta_{ij}(x,y))_{i,j\in C}). \]

Note that each step refines the initial colouring $c_0$. Hence, after a finite number of steps we end up with a colouring $c_l$ that is \emph{stable} in the sense that $c_l$ and $c_{l+1}$ induce the same partition of $V(X) \times V(X)$.
Once we have reached this stable colouring, the algorithm returns this colouring and terminates.
The stable colouring $\overline{c}$ returned by the Weisfeiler--Leman algorithm will be called the \emph{stable colouring of $X$} and be denoted by $\wl_X$.

The following result from \cite{lupini2020nonlocal}, says that if two ordered pairs of vertices in $V(X)\times V(X)$ are distinguished by the Weisfeiler-Leman algorithm \cite{Weisfeiler}, then they lie in different orbitals of $\Qut(X)$.

\begin{lemma}[{\cite[Corollary 3.13]{lupini2020nonlocal}}]
	\label{lem:wl-quantum-automorphims}
	Let $X$ be a graph and let $u$ denote the fundamental representation of $\Qut(X)$.
	Then, $\wl_X(p,p')\neq \wl_X(q,q')$ implies $u_{pq}u_{p'q'} = 0$.
\end{lemma}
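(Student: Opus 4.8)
The plan is to argue by induction on the number of refinement rounds of the Weisfeiler--Leman algorithm. Writing $c_0, c_1, c_2, \ldots$ for the successive colourings produced by the procedure, it suffices to prove the following statement for every $k \geq 0$: if $c_k(p,p') \neq c_k(q,q')$, then $u_{pq} u_{p'q'} = 0$. Since the stable colouring $\wl_X$ coincides with one of the $c_\ell$, this yields the lemma.

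For the base case $k = 0$, observe that the initial colouring $c_0(x,y)$ records exactly whether $x = y$, whether $xy \in E(X)$, or whether $xy \in E(\overline{X})$, which is precisely the data encoded by $\rel(x,y)$. Hence $c_0(p,p') \neq c_0(q,q')$ is equivalent to $\rel(p,p') \neq \rel(q,q')$, and the desired conclusion $u_{pq} u_{p'q'} = 0$ is exactly \cref{lem:commutation_adjacency} under the index identification $i = p$, $j = q$, $k = p'$, $l = q'$, using that $u$ is the fundamental representation of $\Qut(X)$ and therefore commutes with $A_X$.

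For the inductive step, assume the statement for $c_k$ and suppose $c_{k+1}(p,p') \neq c_{k+1}(q,q')$. If already $c_k(p,p') \neq c_k(q,q')$ we are done, so we may assume $c_k(p,p') = c_k(q,q')$; then, by the definition of the refinement, there are colours $i,j$ with $\Delta_{ij}(p,p') \neq \Delta_{ij}(q,q')$. The key idea is to evaluate the product $\Delta_{ij}(p,p') \cdot u_{pq} u_{p'q'}$ in two ways. Writing it as a sum over the $\Delta_{ij}(p,p')$ intermediate vertices $z$ satisfying $c_k(p,z) = i$ and $c_k(z,p') = j$, and inserting into each summand the resolution of identity $\sum_{z'} u_{zz'} = 1$ coming from the magic unitary $u$, the inductive hypothesis forces $u_{pq} u_{zz'} = 0$ unless $c_k(q,z') = i$, and $u_{zz'} u_{p'q'} = 0$ unless $c_k(z',q') = j$. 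This collapses the expression to a double sum over pairs $(z,z')$ lying in both prescribed sets. Performing the symmetric computation on $\Delta_{ij}(q,q') \cdot u_{pq} u_{p'q'}$, this time summing over the $z'$ with $c_k(q,z') = i$ and $c_k(z',q') = j$ and inserting $\sum_z u_{zz'} = 1$, produces the very same double sum. Consequently,
\begin{equation*}
	\Delta_{ij}(p,p') \cdot u_{pq} u_{p'q'} = \Delta_{ij}(q,q') \cdot u_{pq} u_{p'q'}.
\end{equation*}
As the two integer coefficients differ, their difference is a nonzero scalar, and we conclude $u_{pq} u_{p'q'} = 0$, completing the induction.

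I expect the main obstacle to be the bookkeeping of index conventions. Both $\rel$ and the inductive hypothesis are sensitive to the order of their two arguments, so one must verify carefully that applying the hypothesis to the pairs $(p,z)$ versus $(q,z')$ and to $(z,p')$ versus $(z',q')$ selects exactly the intended colour constraints $c_k(q,z') = i$ and $c_k(z',q') = j$, and that the two resolutions of identity—one summing along a row of $u$ and one along a column—are the correct choices to make the two double sums literally coincide. Once these are pinned down, everything else is a routine manipulation inside the magic unitary relations.
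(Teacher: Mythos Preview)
The paper does not supply its own proof of this lemma; it is quoted verbatim as \cite[Corollary 3.13]{lupini2020nonlocal} and used as a black box throughout. Your inductive argument is correct and is essentially the standard proof of that result: the base case is precisely \cref{lem:commutation_adjacency}, and the inductive step---inserting a row sum $\sum_{z'} u_{zz'}=1$ on one side and a column sum $\sum_{z} u_{zz'}=1$ on the other to show that $\Delta_{ij}(p,p')\,u_{pq}u_{p'q'}$ and $\Delta_{ij}(q,q')\,u_{pq}u_{p'q'}$ collapse to the same double sum---is exactly the counting trick used in the original reference. The only thing to be mindful of is what you already flagged: the triple product $u_{pq}u_{zz'}u_{p'q'}$ vanishes as soon as either adjacent pair vanishes, so the induction hypothesis may legitimately be applied to both $(p,z)$ vs.\ $(q,z')$ and $(z,p')$ vs.\ $(z',q')$ simultaneously. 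With that observation made explicit, the argument is complete.
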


We now introduce quantum isomorphism of graphs, which is a topic that is closely related to quantum automorphism groups of graphs. The original definition of quantum isomorphism of graphs was in terms of existence of a perfect quantum strategy for the graph isomorphism game introduced in \cite{atserias2019quantum}. We shall be working with an equivalent definition introduced in \cite{lupini2020nonlocal}. We refer the reader to \cite{atserias2019quantum, lupini2020nonlocal} for a detailed exposition of these topics.

\begin{definition}
Two graphs $X$ and $Y$ are said to be \emph{quantum isomorphic}, written as $X \cong_q Y$, if there is a non-zero unital $C^*$\nobreakdash-algebra $\mathcal{A}$ and a magic unitary $u = [u_{xy}]_{x \in V(X), y \in V(Y)}$ with entries from $\mathcal{A}$ such that $A_Xu = uA_Y$.
\end{definition}

Given graphs $X$ and $Y$, along with vertices $x \in V(X)$ and $y \in V(Y)$, we write $X_x \cong Y_y$, if there is an isomorphism $\sigma: V(X) \to V(Y)$ such that $\sigma(x) = y$. Similarly, we write $X_x \cong_q Y_y$, if there is a non-zero unital $C^*$\nobreakdash-algebra $\mathcal{A}$ and a magic unitary $u = [u_{x'y'}]_{x' \in V(X), y' \in V(Y)}$ with entries from $\mathcal{A}$ such that $A_Xu = uA_Y$ and $u_{xy} = 1$. 

The following result from \cite{lupini2020nonlocal} establishes a strong connection between the theory of quantum automorphism groups of graphs and the notion of quantum isomorphism: 

\begin{theorem}[{\cite[Theorem 4.5]{lupini2020nonlocal}}]
    Two connected graphs $X$ and $Y$ are quantum isomorphic if and only if there are vertices $x \in V(X)$ and $y \in V(Y)$ that are in the same orbit of $\Qut(X \sqcup Y)$.
\end{theorem}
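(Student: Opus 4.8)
The plan is to prove both implications, treating them quite differently. The forward implication is a direct construction and does not use connectedness; the reverse implication is where the hypothesis that $X$ and $Y$ are connected is essential, and it is also the harder of the two.

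For the forward implication, suppose $X \cong_q Y$, witnessed by a magic unitary $u = [u_{xy}]_{x \in V(X), y \in V(Y)}$ over a nonzero $C^*$-algebra $\calA$ with $A_X u = u A_Y$. I would assemble the anti-block-diagonal matrix
\[ w = \begin{pmatrix} 0 & u \\ u^* & 0 \end{pmatrix} \]
indexed by $V(X) \sqcup V(Y)$. Since the entries of $u$ are projections, $w$ is again a magic unitary, and because $A_X$ and $A_Y$ are real symmetric, taking adjoints in $A_X u = u A_Y$ gives $u^* A_X = A_Y u^*$, so $w$ commutes with $A_{X \sqcup Y} = A_X \oplus A_Y$. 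By the universal property of $C(\Qut(X \sqcup Y))$ there is a unital $*$-homomorphism sending the fundamental magic unitary to $w$. As $\calA \neq 0$ and the rows of $u$ sum to $1$, some $u_{x_0 y_0} \neq 0$; pulling this back shows the corresponding generator of $C(\Qut(X \sqcup Y))$ is nonzero, i.e.\ $x_0 \sim_1 y_0$, so $x_0$ and $y_0$ lie in a common orbit.

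For the reverse implication, suppose the fundamental magic unitary $w$ of $\Qut(X \sqcup Y)$ has $w_{x_0 y_0} \neq 0$ for some $x_0 \in V(X)$, $y_0 \in V(Y)$. First I would record that, since $X$ and $Y$ are each connected, the stable Weisfeiler--Leman colouring of $X \sqcup Y$ separates pairs lying in the same component from pairs lying in different components (the colour of a pair determines, for every $\ell$, the number of walks of length $\ell$ between its endpoints, and cross-component pairs admit no walk at all). Combined with \cref{lem:wl-quantum-automorphims}, this yields $w_{x_0 y_0} w_{x x'} = 0$ and $w_{x_0 y_0} w_{y y'} = 0$ for all $x,x' \in V(X)$ and $y,y' \in V(Y)$; equivalently, writing $g_x = \sum_{y \in V(Y)} w_{xy}$ and $h_y = \sum_{x \in V(X)} w_{xy}$, the projection $p := w_{x_0 y_0}$ satisfies $p \le g_x$ and $p \le h_y$ for every $x$ and $y$. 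A short ``edge-following'' argument using \cref{lem:commutation_adjacency} then propagates nonvanishing along the graph: if $x_0 \sim x_1$ then some neighbour $y_1$ of $y_0$ has $w_{x_1 y_1} \neq 0$, and connectedness spreads this over all of $X$ and $Y$.

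To finish, I would pass to the quotient of $C(\Qut(X \sqcup Y))$ in which the relation $w_{x_0 y_0} = 1$ is imposed. In this quotient the inequalities $p \le g_x$ and $p \le h_y$ force $\overline{g_x} = \overline{h_y} = 1$, so every row and column sum of the off-diagonal block $\overline{b} = [\overline{w_{xy}}]_{x \in V(X), y \in V(Y)}$ equals $1$; its entries are projections, so $\overline{b}$ is an honest magic unitary, and it intertwines $A_X$ and $A_Y$ because $w$ commutes with $A_{X \sqcup Y}$. Thus $\overline{b}$ is a quantum isomorphism $X \cong_q Y$, provided the quotient is nonzero. The main obstacle is exactly this nontriviality: a nonzero projection $p$ cannot in general be promoted to the unit, and naive compression $p\,w_{xy}\,p$ in the corner $p\,C(\Qut(X \sqcup Y))\,p$ does not return projections, so one cannot extract the magic unitary by compression alone. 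Resolving this requires genuine compact-quantum-group input --- for instance the orbit theory detecting $\sim_1$ through the Haar state, or identifying the off-diagonal corner as a bi-Galois (linking) object --- to guarantee that the relation $w_{x_0 y_0} = 1$ is consistent whenever $w_{x_0 y_0} \neq 0$.
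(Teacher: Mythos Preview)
The paper does not give its own proof of this cited result; the reverse implication is, however, essentially the content of the proof of \cref{lem:disjoint-union-magic-unitary}~\textit{iii}. Your forward implication is correct and standard.

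For the reverse implication you have a genuine gap, and you are right to flag it --- but the resolution is far simpler than the Haar-state or bi-Galois machinery you reach for at the end. Your Weisfeiler--Leman argument already yields more than you extract from it: for any $x_{1},x_{2}\in V(X)$, $x'\in V(X)$ and $y\in V(Y)$, the pair $(x_{1},x_{2})$ is same-component while $(y,x')$ is cross-component, so $w_{x_{1}y}w_{x_{2}x'}=0$ and hence $g_{x_{1}}(1-g_{x_{2}})=0$. Since each $g_{x}$ is a projection (sum of orthogonal projections in a row of $w$), this forces $g_{x_{1}}=g_{x_{2}}$; the same argument gives $h_{y_{1}}=h_{y_{2}}$ and $g_{x}=h_{y}=:e$ for all $x,y$. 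Now observe that $e\,w_{xy}=g_{x}w_{xy}=w_{xy}$ and $w_{xy}\,e=w_{xy}h_{y}=w_{xy}$, so $e$ commutes with and dominates every entry of the off-diagonal block. Consequently compression by $e$ --- not by $p=w_{x_{0}y_{0}}$ --- \emph{does} return projections: in the corner $e\,C(\Qut(X\sqcup Y))\,e$ (which is nonzero because $e\geq p\neq 0$), the block $[w_{xy}]_{x\in V(X),\,y\in V(Y)}$ is an honest magic unitary satisfying $A_{X}[w_{xy}]=[w_{xy}]A_{Y}$, witnessing $X\cong_{q}Y$. This is exactly the mechanism behind \cref{lem:disjoint-union-magic-unitary}~\textit{i}--\textit{iii}; no quotient by $w_{x_{0}y_{0}}=1$ and no structural compact-quantum-group input is required. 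Your error was to compress by the small projection $p$, which does not commute with the block entries, instead of the large projection $e$, which does.
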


The above result is a quantum analogue of the fact that two connected graphs are isomorphic if and only if there are vertices $x \in V(X)$ and $y \in V(Y)$ that are in the same orbit of $\Aut(X \sqcup Y)$.

\subsection{The Free Product and The Free Wreath Product} In this subsection, we introduce two products of compact matrix quantum groups: the free product and the free wreath product. We will also discuss how they can be used to describe quantum automorphism groups of disjoint unions. We first introduce the free product of compact matrix quantum groups, which is an analogue of the direct product of groups.
\begin{definition}
	Let $\bbG = (C(\bbG), u)$ and $\bbH = (C(\bbH), v)$ be compact matrix quantum groups. Then, their \emph{free product} $\bbG \ast \bbH$ is defined as the compact matrix quantum group $(C(\bbG) \ast_{\bbC} C(\bbH), u \oplus v)$.
\end{definition}

It is not too difficult to see that the free product of two quantum permutation groups is again a quantum permutation group. We can state the following general result relating quantum automorphism groups of disjoint unions of non quantum isomorphic graphs and free products of compact matrix quantum groups:

\begin{theorem}[{\cite[Lemma 6.4]{Q_Aut_Trees}}]
	Let $X_1, \dots, X_n$ be vertex coloured graphs such that for any $i \ne j$, no connected component of $X_i$ is quantum isomorphic to a connected component of $X_j$. Then,
	\begin{equation*}
		\Qut_c\left(\bigsqcup_{i=1}^n X_i\right) = \bigast_{i=1}^n\Qut_c(X_i) 
	\end{equation*}
	where $\bigsqcup_{i=1}^n X_i$ denotes the disjoint union of $X_1, \dots, X_n$.
\end{theorem}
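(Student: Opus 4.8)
The plan is to prove that the two $C^*$-algebras $C(\Qut_c(\bigsqcup_i X_i))$ and $C(\bbF)$, where $\bbF := \bigast_{i=1}^n \Qut_c(X_i)$ has underlying algebra $C(\bbF) = \ast_\bbC\{C(\Qut_c(X_i))\}_{i=1}^n$, are isomorphic via a pair of mutually inverse $*$\nobreakdash-homomorphisms, and that this isomorphism intertwines the comultiplications, so that the stated equality of compact matrix quantum groups follows. Write $X = \bigsqcup_{i=1}^n X_i$, so that $V(X) = \bigsqcup_i V(X_i)$ and $A_X$ is block diagonal with diagonal blocks $A_{X_i}$. Let $u$ denote the fundamental representation of $\Qut_c(X)$, and let $u^{(i)}$ denote that of $\Qut_c(X_i)$.

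One direction is straightforward. Inside $C(\bbF)$, assemble the block-diagonal matrix $v = \bigoplus_i u^{(i)}$ indexed by $V(X)$, with $v_{xy} = 0$ whenever $x,y$ lie in different $X_i$. Since each $u^{(i)}$ is a magic unitary, $v$ is a magic unitary; since $A_X$ and $v$ are both block diagonal and $u^{(i)} A_{X_i} = A_{X_i} u^{(i)}$ holds in each factor, we obtain $A_X v = v A_X$; and $v$ respects the colouring $c$ because each $u^{(i)}$ does so on $X_i$ while the off-diagonal blocks vanish. By the universal property of $\Qut_c(X)$ there is a $*$\nobreakdash-homomorphism $\phi : C(\Qut_c(X)) \to C(\bbF)$ with $\phi(u_{xy}) = v_{xy}$.

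The reverse direction is the crux, and rests on showing that the off-diagonal blocks of $u$ vanish, i.e.\ $u_{xy} = 0$ whenever $x \in V(X_i)$ and $y \in V(X_j)$ with $i \neq j$. I would argue as follows. The relation $A_X u = u A_X$ forces $\Qut_c(X)$ to quantum permute the connected components of $X$: commuting with powers of $A_X$, the magic unitary $u$ respects the reachability relation, so for components $C, D$ of $X$ the sums $\sum_{x \in V(C)} u_{xy}$ (for $y \in V(D)$) form a magic unitary on the set of components that controls the support of the individual entries $u_{xy}$. Consequently, if $u_{xy} \neq 0$ for $x$ in a component $C \subseteq X_i$ and $y$ in a component $D \subseteq X_j$, then $x \sim_1 y$, so $x$ and $y$ lie in the same orbit of $\Qut(X)$; compressing $u$ to the corner cut out by the associated component-level projection yields a non-zero magic unitary intertwining $A_C$ and $A_D$, witnessing $C \cong_q D$. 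This contradicts the hypothesis that no component of $X_i$ is quantum isomorphic to a component of $X_j$ for $i \neq j$. I expect the main obstacle to be exactly this step, namely extracting from a single non-zero off-diagonal entry a genuine quantum isomorphism of the two ambient components; for this I would use the corner/compression construction together with the orbit characterisation of quantum isomorphism \cite[Theorem 4.5]{lupini2020nonlocal}.

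Once the off-diagonal blocks are known to vanish, each diagonal block $[u_{xy}]_{x,y \in V(X_i)}$ is a magic unitary in $C(\Qut_c(X))$: the row and column sums collapse to sums over $V(X_i)$ because the cross terms are $0$, the commutation $A_{X_i}[u_{xy}] = [u_{xy}]A_{X_i}$ follows from $A_X u = u A_X$ restricted to the $i$-th block, and the colour relations are inherited from those for $u$. By the universal property of $\Qut_c(X_i)$ there is a $*$\nobreakdash-homomorphism $C(\Qut_c(X_i)) \to C(\Qut_c(X))$ sending the generators of the $i$-th factor to the corresponding $u_{xy}$, and by the universal property of the free product these combine into a single $*$\nobreakdash-homomorphism $\psi : C(\bbF) \to C(\Qut_c(X))$. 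Checking $\phi$ and $\psi$ on generators shows they are mutually inverse, hence isomorphisms of $C^*$\nobreakdash-algebras; since under both algebras the fundamental representation is the block-diagonal matrix $\bigoplus_i u^{(i)}$, and the comultiplication of a compact matrix quantum group is determined by its fundamental representation via \eqref{eqdef:comult-map}, the map $\phi$ intertwines the comultiplications. Therefore $\Qut_c(\bigsqcup_i X_i) = \bigast_{i=1}^n \Qut_c(X_i)$, as claimed.
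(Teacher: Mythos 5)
The paper does not prove this statement itself (it is quoted from \cite[Lemma 6.4]{Q_Aut_Trees}), but your proposal is correct and follows essentially the same strategy the paper uses for its own closely related results (\cref{lem:disjoint-union-magic-unitary} and \cref{thm:disjoint-union}): two $*$-homomorphisms built from universal properties, with the crux being that a non-zero off-diagonal entry $u_{xy}$ compresses, via the component-level magic unitary $[e_{CD}]$, to a magic unitary intertwining $A_C$ and $A_D$ and hence to a quantum isomorphism of components, contradicting the hypothesis. The step you flag as the main obstacle is exactly the content of \cref{lem:disjoint-union-magic-unitary} (i)--(iii), imported from \cite{Quantum_Sabidussi} and \cite{meunier2023quantum}, so your outline is sound.
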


Next, we introduce the free wreath product of compact matrix quantum groups, first introduced in \cite{bichon2004free}. This is a quantum analogue of the wreath product of groups. 

\begin{definition}
	\label{def:free-wreath-product}
	Let $\bbG=(C(\bbG), [u_{ij}]_{i,j \in [m]})$ and $\bbH=(C(\bbH), [v_{ab}]_{a,b \in [n]})$ be two compact matrix quantum groups. The \emph{free wreath product of $\bbG$ and $\bbH$}, denoted by $\bbG \wr_* \bbH$, is the compact matrix quantum group
	\[ C(\bbG \wr_* \bbH) \coloneqq (C(\bbG^{*n}) \ast_\bbC C(\bbH))/ \langle [u_{ij}^{(a)}, v_{ab}] = 0 \mid i,j \in [m], \ a,b \in [n] \rangle , \]
	where $u^{(a)}$ denotes the $a$-th diagonal block of the magic unitary $u^{(1)} \oplus \cdots \oplus u^{(n)}$ of $\bbG^{*n}$, with fundamental representation $[w_{(a,i)(b,j)}]_{(a,i),(b,j) \in [n] \times [m]}$ (of $\bbG \wr_* \bbH$) given by
	\[ w_{(a,i)(b,j)} \coloneqq u_{ij}^{(a)}v_{ab}. \]
\end{definition}

We have the following result on quantum automorphism groups of disjoint unions of multiple copies of a connected graph:

\begin{theorem}[{\cite[Theorem 6.1]{BanicaBichon}}]
	Let $X$ be a connected vertex coloured graph and $n \in \bbN$. Let $\bigsqcup_{i=1}^n X$ denote the disjoint union of $n$ copies of $X$, all with the same colouring. Then, $\Qut_c(\bigsqcup_{i=1}^n X) = \Qut_c(X) \wr_* \bbS_n^+$, where $\wr_*$ denotes the free wreath product.
\end{theorem}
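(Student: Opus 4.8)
The plan is to realise the isomorphism as a pair of mutually inverse surjective $*$-homomorphisms, one produced by each of the two universal properties in play. Write the vertices of $Y := \bigsqcup_{a=1}^n X$ as pairs $(a,i)$ with $a \in [n]$ indexing the copy and $i \in [m]$ a vertex of $X$, so that the adjacency matrix is block-diagonal, $A = I_n \otimes A_X$. Let $W = [W_{(a,i)(b,j)}]$ denote the fundamental representation of $\Qut_c(Y)$, and let $u^{(a)}_{ij}, v_{ab}$ be the canonical generators of $\Qut_c(X) \wr_* \bbS_n^+$. The first map $\phi$ is the easy direction: one verifies directly that $w_{(a,i)(b,j)} = u^{(a)}_{ij} v_{ab}$ is a magic unitary, that it respects the colouring, and that it commutes with $A$. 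Since $A$ is block-diagonal, the last condition reduces via \cref{lem:commutation_adjacency} to each $u^{(a)}$ commuting with $A_X$, which holds because $u^{(a)}$ generates a copy of $\Qut_c(X)$. The universal property of $\Qut_c(Y)$ then yields $\phi \colon C(\Qut_c(Y)) \to C(\Qut_c(X) \wr_* \bbS_n^+)$ sending $W_{(a,i)(b,j)} \mapsto u^{(a)}_{ij} v_{ab}$.

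The substance lies in the reverse map $\psi$, for which I must recover the ``component'' and ``internal'' generators from $W$. Set $V_{ab} := \sum_i W_{(a,i)(b,j)}$ and $U^{(a)}_{ij} := \sum_b W_{(a,i)(b,j)}$. The crucial point — that quantum automorphisms quantum-permute the connected components — is that $V_{ab}$ is independent of $j$. I would prove this from connectivity of $X$ together with \cref{lem:commutation_adjacency}: if $j \sim j'$ within copy $b$ but the targets lie in distinct copies $a \neq a'$, then the source pair is adjacent while the target pair is non-adjacent, so $\mathrm{rel}$ differs and $W_{(a,i)(b,j)} W_{(a',i')(b,j')} = 0$; combined with the column relations of the magic unitary this forces $\sum_i W_{(a,i)(b,j)} = \sum_i W_{(a,i)(b,j')}$, and connectivity propagates the equality along paths. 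The symmetric argument, using connectivity of the \emph{target} copy, shows that $\sum_j W_{(a,i)(b,j)}$ is independent of $i$. Summing the $j$-independent family over $j$ and the $i$-independent family over $i$ both produce $\sum_{i,j} W_{(a,i)(b,j)} = m V_{ab}$, so the two families coincide; this single family inherits the column relations from one description and the row relations from the other, whence $[V_{ab}]$ is a magic unitary.

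With $V$ in hand, the remaining verifications are comparatively mechanical: that $[U^{(a)}_{ij}]_{ij}$ is a magic unitary commuting with $A_X$ and respecting the colouring (so it defines a representation of $\Qut_c(X)$), that $[U^{(a)}_{ij}, V_{ab}] = 0$, and the key factorisation $U^{(a)}_{ij} V_{ab} = W_{(a,i)(b,j)} = V_{ab} U^{(a)}_{ij}$. Here the orthogonality $V_{ab} V_{ab'} = 0$ for $b \neq b'$, inherited from the row relations, is exactly what annihilates the off-diagonal terms $\sum_{b' \neq b} W_{(a,i)(b',j)} V_{ab}$ in these computations. Feeding $U^{(a)}$ and $V$ into the universal property of the free product, and then checking the defining commutation relations, produces $\psi \colon C(\Qut_c(X) \wr_* \bbS_n^+) \to C(\Qut_c(Y))$. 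Finally $\phi$ and $\psi$ are seen to be mutually inverse by evaluating on generators, using $\sum_b v_{ab} = 1$ and $\sum_i u^{(a)}_{ij} = 1$ on one side and the factorisation on the other. I expect the main obstacle to be precisely the component-permutation lemma of the second paragraph — establishing that $V_{ab}$ is well-defined and magic — since this is the only step that genuinely uses connectivity of $X$ and the characterisation \cref{lem:commutation_adjacency}; everything else is bookkeeping with magic-unitary relations.
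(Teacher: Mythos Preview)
The paper does not supply its own proof of this theorem: it is quoted as a preliminary result from \cite{BanicaBichon}, with only the remark that the edge-coloured argument there adapts to the vertex-coloured setting. There is therefore no proof in the paper to compare against directly.

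That said, your argument is correct and aligns precisely with the machinery the paper develops for its generalisation in \autoref{thm:disjoint-union}. The ``component permutation'' step you flag as the crux --- that the block row- and column-sums of $W$ agree, yield a magic unitary $[V_{ab}]$, and commute with the individual entries --- is exactly the content of \autoref{lem:disjoint-union-magic-unitary}, which the paper imports from \cite{Quantum_Sabidussi} and \cite{meunier2023quantum} rather than proving. Your direct derivation via connectivity and \autoref{lem:commutation_adjacency} is the standard route to that lemma, and the subsequent construction of the two inverse $*$-homomorphisms via universal properties mirrors the proof of \autoref{thm:disjoint-union} in structure. One small point worth making explicit in your ``mechanical'' verifications: showing that $U^{(a)}$ commutes with $A_X$ does require the orthogonality $W_{(a,i)(b,j)}W_{(a,k)(b',l)} = 0$ for $b \neq b'$, which is not a direct consequence of \autoref{lem:commutation_adjacency} alone but follows once you sandwich with $V_{ab}V_{ab'} = 0$ using the two descriptions of $V$ you have already established.
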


Note that \cite{BanicaBichon} works with edge-coloured graphs, but the proof is easily adapted to the vertex-coloured case. The free wreath product is a useful tool when describing quantum automorphism groups of lexicographic products of graphs. We refer the reader to~\cite{Quantum_Sabidussi} for further details.

\section{The free Inhomogeneous Wreath Product}

In this section, we will introduce our main tool for describing the quantum automorphism groups of connected graphs in terms of the quantum automorphism groups of their blocks.
It is a generalization of the classical \emph{inhomogeneous wreath product}, which we first briefly recall.

If $H$ is a group and $\Omega$ is an $H$-set (that is, a set acted upon by $H$), we denote the \emph{orbit} of $x\in\Omega$ under $H$ by the set $x^{H} \coloneq \{x^{h} : h \in H\}$.
These form a partition $\Omega_{1}\cup \dots\cup \Omega_{m}$ of $\Omega$.
The inhomogeneous wreath product, introduced in~\cite{inhom_wr_prod}, is a generalization of the wreath product, where different groups are allowed on the left side, given by the orbits of an action on the right side.

\begin{definition}[\cite{inhom_wr_prod}]
	Let $G_{1}, \dots, G_{m}$ and $H$ be groups, let $\Omega$ be finite $H$-set with orbits $\Omega_{1}, \dots, \Omega_{m}$, and set
	\begin{equation*}
		K = \prod_{i=1}^{m}\prod_{\alpha\in\Omega_i}G_{i,\alpha}
	\end{equation*}
	where $G_{i,\alpha} = G_{i}$ for all $\alpha \in \Omega_{i}$.
	The \emph{inhomogeneous wreath product}
	\begin{equation*}
		(G_{1}, \dots, G_{m})\gwr H
	\end{equation*}
	is the semidirect product of $K$ by $H$, where $H$ acts on $K$ by $(d_{i,\alpha})_{i\in [m], \alpha\in \Omega_{i}}^{q} \coloneq (d_{i,\alpha^{q}})_{i\in [m], \alpha\in \Omega_{i}}$.
\end{definition}

Our goal is now to define a compact quantum group analogue of this, following the construction of the free wreath product by Bichon in \cite{bichon2004free}.
We also refer the reader to \cite[Sec 7.2.2]{freslon2023compact} for a detailed treatment.

\begin{construction}
    \label{cstr:FIWP}
    Let $\bbG_{1}, \dots, \bbG_{m}$ be compact matrix quantum groups with fundamental representations $(g^{(i)}_{pq})_{pq\in \Lambda_{i}}$, and let $\bbH$ be a quantum permutation group acting on a finite set $\Omega$ with orbits $\Omega_{1}, \dots, \Omega_{m}$.
    For each $1\leqslant i\leqslant m$ and $\alpha\in \Omega_{i}$, let $\bbG_{i,\alpha}$ be an isomorphic copy of $\bbG_{i}$ with fundamental representation $[g_{pq}^{(i,\alpha)}]_{p,q \in \Lambda_i}$.
    Let also $[h_{\alpha\beta}]_{\alpha, \beta\in \Omega}$ be a magic unitary fundamental representation of $\bbH$.
    Define the $C^{*}$-algebra $C(\bbF)$ to be the quotient of the free product
    \begin{equation*}
    	\mathcal{A} = \left(\bigast_{i=1}^{n} \left(\bigast_{\alpha\in\Omega_{i}} C(\bbG_{i,\alpha}) \right) \right) \ast C(\bbH)
    \end{equation*}
    by the relations
    \begin{equation*}
    	[g_{pq}^{(i,\alpha)},h_{\alpha\beta}] = 0 \quad \text{for all $i\in[m]$, $p,q\in\Lambda_{i}$, $\alpha\in\Omega_{i}$},
    \end{equation*}
    and let $\Lambda = \bigcup_{i=1}^{n}\Omega_{i}\times\Lambda_{i}$.
    Moreover, define the matrix $f = [f_{\alpha p,\beta q}]_{(\alpha, p), (\beta, q) \in \Lambda}$ as follows:
    \begin{equation*}
    	f_{\alpha p,\beta q} =
    	\begin{cases}
    		h_{\alpha\beta}g_{pq}^{(i,\alpha)} \quad &\text{if $\alpha,\beta \in \Omega_{i}$, for some $i\in[m]$,}\\
    		0 \quad &\text{otherwise.}
    	\end{cases}
    \end{equation*}
\end{construction}

As for usual free wreath products, the main subtelty of the construction is in the definition of the comultiplication.
We collect for convenience all the results that we need in the following statement.

\begin{theorem}
    \label{thm:FIWP}
	Let $\bbG_{1}, \dots, \bbG_{m}, \bbH$, $C(\bbF)$, and $f$ be as in \autoref{cstr:FIWP}.
	Then, the following hold true:
	\begin{enumerate}[label = \roman*.]
		\item There exists a $\ast$-homomorphism $\Delta: C(\bbF) \to C(\bbF)\otimes C(\bbF)$ satisfying the following:
		\[ \Delta(f_{\alpha p, \beta q}) = \sum_{\gamma r \in \Lambda}f_{\alpha p, \gamma r}\otimes f_{\gamma r, \beta q}. \]
		
		\item The pair $(C(\bbF), \Delta)$ is a compact quantum group,
		
		\item If all the matrices $g^{i}$ are magic unitaries, then so is the matrix $f$.
	\end{enumerate}
	In particular $(C(\bbF), \Delta)$ is a compact quantum group with fundamental representation $f$.
\end{theorem}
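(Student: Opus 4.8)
The plan is to define $\Delta$ on the free product $\calA$ one free factor at a time, check that it descends to the quotient $C(\bbF)$, and read off (i) from the resulting formula; then (ii) and the final assertion follow from the standard criterion \cite[Proposition 1.1.4]{neshveyev_tuset} once I verify that the entries of $f$ generate $C(\bbF)$ and that both $f$ and its entrywise adjoint $\overline{f}=[f_{\alpha p,\beta q}^{*}]$ are invertible. Part (iii) is then a short direct computation. As the authors note, essentially all the difficulty is concentrated in constructing $\Delta$.

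To build $\Delta$ I would use the universal property of the free product: it suffices to give compatible unital $*$-homomorphisms out of each free factor into $C(\bbF)\otimes C(\bbF)$. On $C(\bbH)$ I take $\nu(h_{\alpha\beta})=\sum_{\gamma}h_{\alpha\gamma}\otimes h_{\gamma\beta}$, i.e.\ $\Delta_{\bbH}$ followed by the canonical inclusion; this is a $*$-homomorphism because $\Delta_{\bbH}$ is. On $C(\bbG_{i,\alpha})$ I set $\mu_{i,\alpha}(g_{pq}^{(i,\alpha)})=\sum_{\gamma\in\Omega_{i}}\sum_{r\in\Lambda_{i}}g_{pr}^{(i,\alpha)}h_{\alpha\gamma}\otimes g_{rq}^{(i,\gamma)}$. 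The key structural remark is that $\mu_{i,\alpha}=m\circ(\phi\otimes\theta)\circ\Delta_{i}$, where $\Delta_{i}$ is the comultiplication of $\bbG_{i}$, $m$ is multiplication in $C(\bbF)\otimes C(\bbF)$, $\phi(g_{pr}^{(i)})=g_{pr}^{(i,\alpha)}\otimes 1$, and $\theta$ is the map $x\mapsto\sum_{\gamma\in\Omega_{i}}h_{\alpha\gamma}\otimes\iota_{\gamma}(x)$ obtained by gluing the inclusions $\iota_{\gamma}\colon C(\bbG_{i})\cong C(\bbG_{i,\gamma})\hookrightarrow C(\bbF)$ along the row $(h_{\alpha\gamma})_{\gamma}$. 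Here $\theta$ is a genuine $*$-homomorphism because the $h_{\alpha\gamma}$ are orthogonal projections with $\sum_{\gamma\in\Omega_{i}}h_{\alpha\gamma}=1$ (entries outside the orbit of $\alpha$ vanish), so $h_{\alpha\gamma}h_{\alpha\delta}=\delta_{\gamma\delta}h_{\alpha\gamma}$ collapses products onto the diagonal copy; and $m\circ(\phi\otimes\theta)$ is multiplicative because the ranges of $\phi$ and $\theta$ commute, which is exactly the relation $[g_{pr}^{(i,\alpha)},h_{\alpha\gamma}]=0$.

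The hard part will be checking that the combined map $\tilde\Delta\colon\calA\to C(\bbF)\otimes C(\bbF)$ kills the defining relations $[g_{pq}^{(i,\alpha)},h_{\alpha\beta}]$. Expanding $\mu_{i,\alpha}(g_{pq}^{(i,\alpha)})\,\nu(h_{\alpha\beta})$ and $\nu(h_{\alpha\beta})\,\mu_{i,\alpha}(g_{pq}^{(i,\alpha)})$ and simplifying with the same two identities, both reduce to $\sum_{\gamma,r}g_{pr}^{(i,\alpha)}h_{\alpha\gamma}\otimes X_{\gamma r}$, with $X_{\gamma r}=g_{rq}^{(i,\gamma)}h_{\gamma\beta}$ in one case and $X_{\gamma r}=h_{\gamma\beta}g_{rq}^{(i,\gamma)}$ in the other; their equality is precisely the relation $[g_{rq}^{(i,\gamma)},h_{\gamma\beta}]=0$ holding in the target. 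Thus the comultiplication exists exactly because the commutation relations are self-reproducing under $\tilde\Delta$, and $\tilde\Delta$ descends to $\Delta\colon C(\bbF)\to C(\bbF)\otimes C(\bbF)$. A final short computation, using that $h_{\alpha\gamma}$ and $g_{pr}^{(i,\alpha)}$ of the same copy commute, rewrites $\Delta(f_{\alpha p,\beta q})=\nu(h_{\alpha\beta})\mu_{i,\alpha}(g_{pq}^{(i,\alpha)})$ as $\sum_{\gamma r\in\Lambda}f_{\alpha p,\gamma r}\otimes f_{\gamma r,\beta q}$, proving (i).

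For the remaining parts I first observe that $f$ generates $C(\bbF)$: summing a row over the orbit gives $\sum_{\beta\in\Omega_{i}}f_{\alpha p,\beta q}=g_{pq}^{(i,\alpha)}$ since $\sum_{\beta\in\Omega_{i}}h_{\alpha\beta}=1$, and because each $g^{(i,\alpha)}$ is invertible and a unital $C^{*}$-subalgebra is inverse-closed, the entries of $(g^{(i,\alpha)})^{-1}$ lie in $C^{*}(f)$, whence $h_{\alpha\beta}=\sum_{q}f_{\alpha p,\beta q}\,(g^{(i,\alpha)})^{-1}_{qp}\in C^{*}(f)$ as well. For invertibility I note that $f$ is block-diagonal across orbits and that on $\Omega_{i}$ it factors as $f|_{\Omega_{i}}=D_{i}\,(H_{i}\otimes I_{\Lambda_{i}})$, where $D_{i}=\operatorname{diag}(g^{(i,\alpha)})_{\alpha\in\Omega_{i}}$ and $H_{i}=[h_{\alpha\beta}]_{\alpha,\beta\in\Omega_{i}}$; both factors are invertible (the $g^{(i,\alpha)}$ by hypothesis, and $H_{i}$ because a magic unitary is a unitary matrix), and the analogous factorization with $\overline{g^{(i,\alpha)}}$ handles $\overline{f}$. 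With $f$ generating, $f,\overline{f}$ invertible, and the coproduct formula from (i), \cite[Proposition 1.1.4]{neshveyev_tuset} gives at once that $(C(\bbF),\Delta)$ is a compact matrix quantum group with fundamental representation $f$, establishing (ii) and the final claim. Finally, for (iii), if every $g^{(i)}$ is a magic unitary then each $f_{\alpha p,\beta q}=h_{\alpha\beta}g_{pq}^{(i,\alpha)}$ is a product of two commuting self-adjoint projections, hence a self-adjoint projection, and $\sum_{\beta\in\Omega_{i}}\sum_{q\in\Lambda_{i}}h_{\alpha\beta}g_{pq}^{(i,\alpha)}=\sum_{\beta}h_{\alpha\beta}=1$ using $\sum_{q}g_{pq}^{(i,\alpha)}=1$, with the column sums identical, so $f$ is a magic unitary.
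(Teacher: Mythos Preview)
Your proof is correct and follows essentially the same architecture as the paper's: construct $\Delta$ factor-by-factor on the free product, check it descends to the quotient, verify the coproduct formula, then show that $f$ generates $C(\bbF)$ and that $f,\overline{f}$ are invertible. The only notable organizational difference is in part~(ii): where the paper computes directly that $f$ is unitary (tacitly using that each $g^{(i,\alpha)}$ is unitary, which is not literally part of the stated hypotheses), you instead exhibit the block factorization $f|_{\Omega_i}=D_i\,(H_i\otimes I_{\Lambda_i})$ and deduce invertibility of $f$ and $\overline{f}$ from invertibility of the factors---this is slightly cleaner and matches the hypotheses as stated.
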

\begin{proof}
	We follow the lines of the original argument of Bichon in \cite{bichon2004free}.
	Let first $i_{\bbH}: C(\bbH) \to C(\bbF)$, be the injective embedding of $C(\bbH)$ in $C(\bbF)$ coming from the universal property of the free product and let $\Delta_{\bbH}: C(\bbH) \to C(\bbH) \otimes C(\bbH)$ be the comultiplication of $\bbH$.
	We define $\widetilde{\Delta}_\bbH: C(\bbH) \to C(\bbF) \otimes C(\bbF)$ as $\widetilde{\Delta}_{\bbH} = (i_{\bbH} \otimes i_{\bbH}) \circ \Delta_{\bbH}$.
	In particular, we have
	\begin{equation*}
		\widetilde{\Delta}_\bbH(h_{\alpha \beta}) = \sum_{\gamma \in \Omega} h_{\alpha \gamma} \otimes h_{\gamma \beta}
	\end{equation*}
	for all $\alpha, \beta \in \Omega$.
	
	We now want to define similarly a $*$-homomorphism $\widetilde{\Delta}_{\bbG_{i,\alpha}}: C(\bbG_{i,\alpha}) \to C(\bbF) \otimes C(\bbF)$, but this requires more care.
	First, we consider the $*$-homomorphism $\Phi_{\alpha, \beta}^{i} : C(\bbG_{i, \alpha}) \to C(\bbG_{i, \alpha})\otimes C(\bbG_{i, \beta})$, defined by
	\begin{equation*}
		\Phi_{\alpha, \beta}^{i}(g_{pq}^{i, \alpha}) = \sum_{r\in \Lambda_{i}}g_{pr}^{i, \alpha}\otimes g_{rq}^{i, \beta}.
	\end{equation*}
	Second, we observe that on $C(\bbG_{i, \alpha}) \otimes C(\bbG_{i, \beta}) \subset C(\bbF)\otimes C(\bbF)$, the map
	\begin{equation*}
		\Psi_{\alpha, \beta}^{i} : x\otimes y\mapsto (xh_{\alpha, \beta})\otimes y = (h_{\alpha, \beta}x)\otimes y
	\end{equation*}
	defines a $*$-homomorphism.
	Third, we set
	\begin{equation*}
		\Phi_{\alpha}^{i} = \sum_{\beta\in \Omega_{i}}\Psi_{\alpha, \beta}^{i}\circ\Phi_{\alpha, \beta}^{i} : C(\bbG_{i, \alpha})\to C(\bbF)\otimes C(\bbF).
	\end{equation*}
	and once again this is a $*$-homomorphism because of the commutation relations in $C(\bbF)$ and the orthogonality properties of the generators of $C(\bbH)$.
	The universal property of the free product now implies the existence of a unique $*$-homomorphism $\Delta : \mathcal{A} \to C(\bbF)\otimes C(\bbF)$ (recall that $\mathcal{A}$ is the free product appearing in the definition of $C(\bbF)$) which coincides with $\widetilde{\Delta}_{\bbH}$ on $C(\bbH)$ and with $\Phi_{i, \alpha}$ on $C(\bbG_{i, \alpha}))$ so that all that remains to be checked is that $\Delta$ vanishes on elements of the form $[g_{pq}^{i\alpha}, h_{\alpha \beta}]$.
	And indeed,
	\begin{align*}
		\Delta\left(h_{\alpha \beta}g_{pq}^{i\alpha}\right) & = \Delta (h_{\alpha \beta}) \Delta\left(g_{pq}^{(i,\alpha)}\right) \\
		& = \widetilde{\Delta}_\bbH (h_{\alpha \beta})\widetilde{\Delta}_{\bbG_{i,\alpha}}\left(g_{pq}^{(i,\alpha)}\right)\\
		& = \left(\sum_{\gamma \in \Omega} h_{\alpha \gamma} \otimes h_{\gamma \beta}\right) \left(\sum_{\delta\in \Omega_{i}}\sum_{r \in \Lambda_i} h_{\alpha, \delta}g_{pr}^{(i,\alpha)}\otimes g_{rq}^{(i,\delta)}\right) \\
		& = \sum_{\gamma \in \Omega, \delta\in \Omega_{i}}\sum_{r \in \Lambda_i} h_{\alpha \gamma} h_{\alpha, \delta}g_{pr}^{(i,\alpha)}\otimes h_{\gamma\beta}g_{rq}^{(i,\delta)} \\
		& = \sum_{\gamma \in \Omega_i, r \in \Lambda_i} h_{\alpha \gamma}g_{pr}^{(i,\alpha)} \otimes h_{\gamma \beta}g_{rq}^{(i,\gamma)}\\
		& = \sum_{\gamma \in \Omega_i, r \in \Lambda_i} g_{pr}^{(i,\alpha)}h_{\alpha \gamma} \otimes g_{rq}^{(i,\gamma)}h_{\gamma \beta}\\
		& = \Delta\left(g_{pq}^{i\alpha}h_{\alpha \beta}\right).
	\end{align*}
	
	To conclude, we now compute $\Delta(f_{\alpha p, \beta q})$ to check that the formula in the statement holds.
	If there is no $i$ such that $\alpha, \beta \notin \Omega_{i}$, then $f_{\alpha p, \beta q} = 0$, which implies that $\Delta(f_{\alpha p, \beta q}) = 0$.
	In this case, we note that $\sum_{\gamma r \in \Lambda} f_{\alpha p, \gamma r} \otimes f_{\gamma r, \beta q} = 0$, as at least one of the terms in the tensor product is zero in each of the summands.
	If $\alpha, \beta \in \Omega_{i}$, then $f_{\alpha p, \beta q} = h_{\alpha\beta} g_{p,q}^{(i,\alpha)}$.
	Since $\Delta$ is a $*$-homomorphism, we have using the computations above
	\begin{align*}
		\Delta(f_{\alpha p, \beta q}) & = \Delta \left(h_{\alpha \beta} g_{pq}^{(i,\alpha)}\right) \\
		& = \sum_{\gamma \in \Omega_i, r \in \Lambda_i} h_{\alpha \gamma}g_{pr}^{(i,\alpha)} \otimes h_{\gamma \beta}g_{rq}^{(i,\gamma)}\\
		& = \sum_{\gamma \in \Omega_i, r \in \Lambda_i} f_{\alpha p, \gamma r} \otimes f_{\gamma r, \beta q} \\
		& = \sum_{(\gamma, r) \in \Lambda} f_{\alpha p, \gamma r} \otimes f_{\gamma r, \beta q},
	\end{align*}
	where the last line holds because $f_{\alpha p, \gamma r} = 0$  if $\gamma \notin \Omega_{i}$.
	This finishes the proof of the first point.
	
	To prove point $ii.$, we will prove that $f$ is a fundamental representation, in the sense that it is unitary, has unitary conjugate and that its coefficients generate $C(\bbF)$ as a C*-algebra.
	Unitarity follows from a straightforward computation: for $\alpha, \beta\in \Omega_{i}$, using the fact that $g^{(i, \alpha)}$ is unitary we have
	\begin{align*}
		\sum_{(\gamma, r)\in \Lambda} f_{\alpha p, \gamma r}f_{\beta q, \gamma q}^{*} & = \sum_{\gamma\in \Omega_{i}}\sum_{r\in \Lambda_{i}}h_{\alpha\gamma}g_{pr}^{(i, \alpha)}g_{qr}^{(i, \beta)\ast}h_{\beta\gamma} \\
		& = \sum_{\gamma\in \Omega_{i}}\sum_{r\in \Lambda_{i}}h_{\alpha\gamma}h_{\beta\gamma}g_{pr}^{(i, \alpha)}g_{qr}^{(i, \beta)\ast} \\
		& = \delta_{\alpha, \beta}\sum_{r\in \Lambda_{i}}g_{pr}^{(i, \alpha)}g_{qr}^{(i, \alpha)\ast} \\
		& = \delta_{\alpha, \beta}\delta_{p, q}
	\end{align*}
	and the sum vanishes if $\alpha$ and $\beta$ are not in the same orbit.
	Similar computations show that $f$ is unitary, as well as $\overline{f}$.
	As for the generation property, observe that by definition of the free product, $C(\bbF)$ is generated by the set of elements 
	\begin{equation*}
		\{h_{\alpha, \beta}\}_{\alpha, \beta \in \Omega}\sqcup \{g_{pq}^{(i,\alpha)}\}_{i \in [m], \alpha \in \Omega_{i}, p,q \in \Lambda_{i}}.
	\end{equation*}
	It is therefore enough to prove that all these lie in the linear span of $\{f_{\alpha p, \beta q}\}_{\alpha p, \beta q \in \Lambda}$.
	And indeed, we have
	\begin{equation*}
		g_{pq}^{(i, \alpha)} = \sum_{\beta\in \Omega_{i}}f_{\alpha p, \beta q} \qquad \text{and} \qquad h_{\alpha\beta} = \sum_{q\in \Lambda_{i}}f_{\alpha p, \beta q}\left(g_{pq}^{(i,\alpha)}\right)^{*},
	\end{equation*}
	which establishes $ii.$
	
	We now assume that all the fundamental representations are magic unitaries.
	We start by showing that each entry $f_{\alpha p, \beta q}$ is a projection in $C(\bbF)$.
	We may assume that $\alpha, \beta \in \Omega_{i}$, for some $ i \in [m]$.
	Then, 
	\begin{align*}
		f_{\alpha p, \beta q}^{2} & = h_{\alpha \beta } g_{pq}^{(i,\alpha)} h_{\alpha \beta } g_{pq}^{(i,\alpha)} \\
		& =  h_{\alpha \beta } h_{\alpha \beta } g_{pq}^{(i,\alpha)}  g_{pq}^{(i,\alpha)} \\
		& = h_{\alpha \beta } g_{pq}^{(i,\alpha)} \\
		& = f_{\alpha p, \beta q}
	\end{align*}
	since $h_{\alpha \beta },\ g_{pq}^{(i,\alpha)}$ are commuting projections.
	Similarly, one can also show that $f_{\alpha p, \beta q}^{*} = f_{\alpha p, \beta q}$, so that $f_{\alpha p, \beta q}$ is projection for every $(\alpha, p), (\beta, q) \in \Lambda$.
	
	Now, consider $\sum_{(\beta, q) \in \Lambda} f_{\alpha p, \beta q}$ for some fixed $(\alpha, p) \in \Lambda$.
	If $\alpha \in \Omega_{i}$, then we have the following: 
	\begin{align*}
		\sum_{(\beta, q) \in \Lambda} f_{\alpha p, \beta q} & = \sum_{\beta \in \Omega_{i}, q \in \Lambda_{i}} f_{\alpha p, \beta q} \\
		& = \sum_{\beta \in \Omega_{i}, q \in \Lambda_{i}} h_{\alpha,\beta} g_{pq}^{(i, \alpha)} \\
		& = \left(\sum_{\beta \in \Omega_{i}} h_{\alpha,\beta}\left(\sum_{q \in \Lambda_{i}} g_{pq}^{(i, \alpha)}\right)\right) \\
		& = \sum_{\beta \in \Omega_{i}} h_{\alpha,\beta} = 1
	\end{align*}
	Similarly, one can show that for a fixed $(\beta, q) \in \Lambda$, $\sum_{(\alpha, p) \in \Lambda} f_{\alpha p, \beta q} = 1$.
	This shows that $f$ is a magic unitary, thus establishing $iii.$
\end{proof}

We are now ready for the definition of the quantum counterpart of inhomogeneous wreath products.
In the sequel, we will only use it when $\bbG_{1}, \dots, \bbG_{n}$ are quantum permutation groups, hence we state it in that restricted setting.

\begin{definition}
	Let $\bbG_{1}, \dots, \bbG_{m}$ be quantum permutation groups acting on $\Lambda_{1}, \dots, \Lambda_{m}$, respectively, and let $\bbH$ be a quantum permutation group acting on $\Omega$ with orbits $\Omega_{1}, \dots, \Omega_{m}$.
    The \emph{free inhomogeneous wreath product} of $\bbG_{1}, \dots, \bbG_{m}$ with $\bbH$, denoted $(\bbG_{1}, \dots , \bbG_{m}) \fgwr \bbH$, is the compact quantum group $\bbF = (C(\bbF), \Delta)$ acting on $\Lambda = \bigsqcup_{i \in [m]} \Omega_i \times \Lambda_i$, as defined in \autoref{cstr:FIWP} and \autoref{thm:FIWP}.
\end{definition}

Let us briefly explain how this relates to the classical case, the details are direct generalizations of those given in \cite[Sec 7.2.2]{freslon2023compact} for free wreath products.
Assume that $\bbG_1,\ldots,\bbG_m,\bbH$ are all classical, and assume that for each $i \in [m]$ we have a faithful unitary representation $\rho_{i}$ of $\bbG_{i}$ on a finite-dimensional Hilbert space $V_{i}$.
Write
\begin{equation*}
	V = \bigoplus_{i=1}^{m}\bigoplus_{\alpha\in \Omega_{i}}V_{i, \alpha},
\end{equation*}
where $V_{i, \alpha} = V_{i}$.
For each $i\in [m]$ and $\alpha\in \Omega_{i}$, we define a representation $\rho_{i, \alpha}$ of $\bbG_{i, \alpha}$ on $V$ by letting an element $g$ act by $\rho_{i}(g)$ on $V_{i, \alpha}$ and by the identity on all other summands.
Furthermore, denoting by $(e_{p}^{(i, \alpha)})$ an orthonormal basis of $V_{i, \alpha}$, we define a unitary representation of $\bbH$ on $V$ by the formula
\begin{equation*}
	\pi(h)\big(e_{p}^{(i, \alpha)}\big) = e_{p}^{(i, \alpha^{h})}.
\end{equation*}
Then, a simple computation shows that the map sending $(g^{i, \alpha}, h)_{(i, \alpha)\in \Lambda}$ to
\begin{equation*}
	\pi(h)\prod_{i=1}^{m}\prod_{\alpha\in \Omega_{i}}\rho_{i, \alpha}(g^{i, \alpha})
\end{equation*}
defines a faithful unitary representation of the (classical) inhomogeneous wreath product $(\bbG_{1}, \dots, \bbG_{m}) \gwr \bbH$ on $V$.
The coefficients of that representation are
\begin{equation*}
	\rho_{(p\alpha, q\beta)}(g^{i, \alpha}, h) =
	\begin{cases}
		h_{\alpha\beta}\rho\big(g^{(i,\alpha)}\big)_{pq} &\quad \text{if $\alpha,\beta \in \Omega_{i}$, for some $i\in[m]$,}\\
		0 &\quad \text{otherwise.}
	\end{cases}
\end{equation*}
This shows that the free inhomogeneous wreath product is the natural noncommutative analogue of the inhomogeneous wreath product.

It should also be noted that the free inhomogeneous wreath product is a generalization of both the free product and the free wreath product.
Indeed, one can easily prove the following using universal properties: 

\begin{proposition}\label{prop:one-prod-to-rule-them-all}
	\label{prop:free_hom_are_inhom}
	The following hold true for the free inhomogeneous wreath product:
	\begin{enumerate}[label = \roman*.]
		\item If $\bbG_{1} = \dots = \bbG_{m} = \bbG$, then $(\bbG_{1}, \dots , \bbG_{m}) \gwr_{*} \bbH \cong \bbG \wr_{*} \bbH$.
		\item If $\bbH$ is the trivial group, then $(\bbG_{1}, \dots, \bbG_{n}) \gwr_{*} \bbH \cong \bigast_{i=1}^{n} \bbG_{i}$.
	\end{enumerate}
\end{proposition}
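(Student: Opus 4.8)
The plan is to prove both statements via universal properties, by comparing the two universal presentations in each case. Recall that a compact matrix quantum group is completely determined by its fundamental representation together with the relation \eqref{eqdef:comult-map} that fixes the comultiplication on the entries. Hence in each case it suffices to produce a $*$-isomorphism of the two underlying universal $C^*$-algebras which sends the fundamental representation of one side to that of the other entrywise; the comultiplications then automatically agree, and no separate argument for intertwining is needed. Since both sides are presented by generators and relations, I would obtain this isomorphism from the universal property by checking that each quotient satisfies the other's defining relations.

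For \textit{i.}, assume $\bbG_1 = \dots = \bbG_m = \bbG$. Then all the sets $\Lambda_i$ coincide with the set on which $\bbG$ acts, the orbits satisfy $\bigsqcup_{i=1}^m \Omega_i = \Omega$, and the inner free product $\bigast_{i=1}^m \bigast_{\alpha \in \Omega_i} C(\bbG_{i,\alpha})$ of \cref{cstr:FIWP} is just $C(\bbG^{*|\Omega|})$, with one copy $\bbG_\alpha$ of $\bbG$ for each $\alpha \in \Omega$. Under this identification (and $h \leftrightarrow v$, $g \leftrightarrow u$) the relations $[g^{(\alpha)}_{pq}, h_{\alpha\beta}] = 0$ become precisely the relations $[u^{(\alpha)}_{pq}, v_{\alpha\beta}] = 0$ defining $\bbG \wr_* \bbH$ in \cref{def:free-wreath-product}, so the two quotient $C^*$-algebras are canonically identified. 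It then remains to match fundamental representations by checking $f_{\alpha p, \beta q} = h_{\alpha\beta} g^{(\alpha)}_{pq} = u^{(\alpha)}_{pq} v_{\alpha\beta} = w_{(\alpha,p)(\beta,q)}$. The only point needing a word of care is when $\alpha$ and $\beta$ lie in different orbits: there $f_{\alpha p, \beta q} = 0$ by definition, and this is consistent because distinct orbits are distinct $\sim_1$-classes, forcing $h_{\alpha\beta} = 0$ and hence $w_{(\alpha,p)(\beta,q)} = 0$ as well.

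For \textit{ii.}, assume $\bbH$ is trivial. As a quantum permutation group on $\Omega$ its magic unitary is the identity matrix, so $h_{\alpha\beta} = \delta_{\alpha\beta} 1$ and $C(\bbH) = \bbC$; moreover every orbit is a singleton, whence $m = |\Omega|$ and $\Omega_i = \{\alpha_i\}$. The free factor $C(\bbH) = \bbC$ is absorbed by the free product, and the commutation relations $[g^{(i,\alpha_i)}_{pq}, h_{\alpha_i\beta}] = 0$ hold automatically since each $h_{\alpha_i\beta}$ is a scalar; therefore $C(\bbF) \cong \bigast_{i=1}^m C(\bbG_i)$. Finally $f$ is block diagonal, $f_{\alpha_i p, \alpha_j q} = \delta_{ij} g^{(i)}_{pq}$, so $f = g^{(1)} \oplus \dots \oplus g^{(m)}$, which is exactly the fundamental representation of $\bigast_{i=1}^m \bbG_i$. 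This yields the desired isomorphism of compact matrix quantum groups.

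In both cases the defining relations match on the nose, so the substantive verification---and the step I would treat as the crux---is the entrywise identification of the fundamental representations, in particular the orbit-vanishing bookkeeping in \textit{i.} Everything else is routine: once $f$ is matched with $w$ (respectively with $g^{(1)} \oplus \dots \oplus g^{(m)}$), the equality of comultiplications follows from \eqref{eqdef:comult-map} with no further work.
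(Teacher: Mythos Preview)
Your proposal is correct and is precisely the argument the paper has in mind: the paper does not spell out a proof at all, merely stating that one ``can easily prove the following using universal properties,'' and your write-up carries this out by matching the presentations and fundamental representations on both sides. The orbit-vanishing check in \textit{i.} (that $h_{\alpha\beta}=0$ when $\alpha,\beta$ lie in different orbits) is exactly the right bookkeeping.
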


\subsection{Application to Disjoint Unions of Graphs}
\label{subsec:disjoint-union}

As a first application of the free inhomogeneous free wreath product, we consider disjoint unions of connected graphs that are either isomorphic or non-quantum isomorphic.
We will show that this disjoint union can be expressed as a free inhomogeneous wreath product of a free product of $\bbS_{N}^{+}$ with the quantum automorphism groups of the individual graphs.
First, we recall some fundamental facts from~\cite[Section 3]{meunier2023quantum} (see also~\cite[Lemma 5.2]{Quantum_Sabidussi}).

\begin{lemma}
	\label{lem:disjoint-union-magic-unitary}
	Let $\{X_i\}_{i \in [n]}$ be coloured connected graphs.
	Let $u$ denote the fundamental representation of the quantum automorphism group of the disjoint union $X = \bigsqcup_{i=1}^n \left(\bigsqcup_{\alpha=1}^{k_i} X_i^\alpha\right)$, where each $X_i^\alpha$ is an isomorphic copy of $X_i$.
	Then, the following hold true: 
	\begin{enumerate}[label = \roman*.]
		\item For any $i \in [n]$, $\alpha \in [k_i]$ and $j \in [n]$, $\beta \in [k_{j}]$, denote the submatrix of $u$ with rows and columns indexed by $V(X_i^\alpha)$ and $V(X_{j}^{\beta})$ by $u^{i\alpha, j\beta}$.
		Then, the rows and columns sum to the same projection, say $e_{i\alpha, j\beta}$ and $A_{X_i}u^{i\alpha,j\beta} = u^{i\alpha,j\beta}A_{X_j}$.
		
		\item The matrix $[e_{i\alpha, j\beta}]$ is a magic unitary, and $[e_{i\alpha, j\beta}, u_{pq}] = 0$ for any $p \in V(X_i^\alpha)$ and $q \in V(X_{j}^{\beta})$.
		
		\item In particular, if $X_i$ and $X_{j}$ are not quantum isomorphic then, $u_{pq} = 0$ for all $p \in V(X_i^\alpha)$, $q \in V(X_{j}^{\beta})$, $\alpha \in [k_i]$ and $\beta \in [k_{j}]$.
		
		\item If $\alpha \in [k_i]$ and $\beta \in [k_j]$ for some $i,j$ such that $X_i$ and $X_j$ are not quantum isomorphic, then $e_{i\alpha, j\beta} = 0$.
	\end{enumerate}
\end{lemma}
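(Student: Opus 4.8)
The plan is to exploit the block-diagonal structure of $A_X$ and the connectedness of the components, so that each assertion reduces to a manipulation of magic-unitary relations plus a single application of the Weisfeiler--Leman bound \cref{lem:wl-quantum-automorphims}. For \emph{(i)}, I would first note that since $X$ is a disjoint union, $A_X$ is block diagonal with one block $A_{X_i}$ per copy $X_i^\alpha$; reading off the $(i\alpha,j\beta)$ block of $A_X u = u A_X$ then yields $A_{X_i}u^{i\alpha,j\beta} = u^{i\alpha,j\beta}A_{X_j}$ at once. For the equality of row and column sums I would set $r_p^{(j\beta)} := \sum_{q\in V(X_j^\beta)}u_{pq}$, which is a projection as a partial row sum of a magic unitary, and show it is independent of $p\in V(X_i^\alpha)$. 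The input is that for $p,p'\in V(X_i^\alpha)$ and $q,q'$ lying in \emph{different} components one has $u_{pq}u_{p'q'}=0$; this gives the orthogonality $r_p^{(j\beta)}r_{p'}^{(j'\beta')}=0$ whenever $(j\beta)\ne(j'\beta')$, and feeding this into $\sum_{j\beta}r_{p'}^{(j\beta)}=1$ together with self-adjointness forces $r_p^{(j\beta)}=r_{p'}^{(j\beta)}=:e_{i\alpha,j\beta}$. Running the same computation on columns produces a constant column sum $\tilde e_{i\alpha,j\beta}$, and from $u_{pq}\le e_{i\alpha,j\beta}$ and $u_{pq}\le\tilde e_{i\alpha,j\beta}$ I would deduce $e_{i\alpha,j\beta}\tilde e_{i\alpha,j\beta}=\tilde e_{i\alpha,j\beta}$ and $\tilde e_{i\alpha,j\beta}e_{i\alpha,j\beta}=e_{i\alpha,j\beta}$, whence taking adjoints yields $e_{i\alpha,j\beta}=\tilde e_{i\alpha,j\beta}$.

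For \emph{(ii)} everything is then bookkeeping. Each $e_{i\alpha,j\beta}$ is a projection, the row sums $\sum_{j\beta}e_{i\alpha,j\beta}=1$ and column sums $\sum_{i\alpha}e_{i\alpha,j\beta}=1$ hold by construction, and two entries sharing a row or a column are orthogonal, being sums over disjoint index sets of mutually orthogonal $u_{pq}$; hence $[e_{i\alpha,j\beta}]$ is a magic unitary. The commutation $[e_{i\alpha,j\beta},u_{pq}]=0$ for $p\in V(X_i^\alpha)$, $q\in V(X_j^\beta)$ is immediate, since $u_{pq}$ is one of the orthogonal summands of $e_{i\alpha,j\beta}$, giving $e_{i\alpha,j\beta}u_{pq}=u_{pq}=u_{pq}e_{i\alpha,j\beta}$.

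For \emph{(iii)} I would argue by contradiction. If $u_{pq}\ne0$ for some $p\in V(X_i^\alpha)$, $q\in V(X_j^\beta)$, then $e_{i\alpha,j\beta}\ge u_{pq}\ne0$, so the corner algebra $\mathcal A' = e_{i\alpha,j\beta}\,C(\Qut_c(X))\,e_{i\alpha,j\beta}$ is a nonzero unital $C^*$-algebra with unit $e_{i\alpha,j\beta}$. By \emph{(i)} the submatrix $u^{i\alpha,j\beta}$ has entries in $\mathcal A'$, with row and column sums equal to the unit, so it is a magic unitary over $\mathcal A'$ satisfying $A_{X_i}u^{i\alpha,j\beta}=u^{i\alpha,j\beta}A_{X_j}$ (and respecting colours, as $u_{pq}=0$ whenever $c(p)\ne c(q)$); this witnesses $X_i\cong_q X_j$, contradicting the hypothesis. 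Part \emph{(iv)} then follows instantly, since $e_{i\alpha,j\beta}=\sum_{q\in V(X_j^\beta)}u_{pq}=0$.

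The main obstacle is the constancy of the row sums in \emph{(i)}: once that and its equality with the column sums are secured, the remaining parts are routine. The delicate ingredient is precisely the vanishing $u_{pq}u_{p'q'}=0$ for $p,p'$ in one component and $q,q'$ in two distinct components. I expect to justify it via \cref{lem:wl-quantum-automorphims}, using that the two-dimensional Weisfeiler--Leman colouring determines the (possibly infinite) graph distance between a pair of vertices and hence separates intra-component pairs from inter-component pairs; if one prefers not to invoke this property of WL, the same orthogonality can be obtained directly by iterating $A_X u = u A_X$ to show that $u$ cannot send a single vertex into two distinct components at once.
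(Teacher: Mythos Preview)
Your argument is correct. For parts \emph{(iii)} and \emph{(iv)} it coincides with the paper's own proof: the paper also passes to the corner determined by $e_{i\alpha,j\beta}$ (phrased as ``restricting to the range of $e_{i\alpha,j\beta}$'') to extract a magic unitary witnessing $X_i\cong_q X_j$, and then reads off \emph{(iv)} from \emph{(iii)}.

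For parts \emph{(i)} and \emph{(ii)}, the paper does not give its own argument at all; it simply cites \cite[Lemma~5.2]{Quantum_Sabidussi}. Your approach is therefore not so much different as more explicit: you supply a self-contained proof, with the key input being the orthogonality $u_{pq}u_{p'q'}=0$ for $p,p'$ in one component and $q,q'$ in two distinct components, obtained via \cref{lem:wl-quantum-automorphims} and the fact that $2$-WL determines graph distance. The constancy-of-row-sums computation and the $e=\tilde e$ step are clean and go through as you describe (for the latter, note that once $e$ is known to be independent of $p$ you get $u_{p'q}\le e$ for every $p'$, hence $e\tilde e=\tilde e$, and symmetrically $e\tilde e=e$). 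The alternative you mention at the end---iterating $A_Xu=uA_X$ to derive the same orthogonality directly---is closer in spirit to how the cited reference proceeds; either route is fine here, and invoking WL has the advantage of reusing machinery already set up in the paper.
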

\begin{proof}
	\textit{i.} and \textit{ii.} follow from~\cite[Lemma 5.2]{Quantum_Sabidussi}.
	For $\textit{iii.}$, note that if $u_{pq} \neq 0$ for some $p \in V(X_i^\alpha)$, $q \in V(X_{j}^{\beta})$, $\alpha \in [k_i]$ and $\beta \in [k_{j}]$, then it follows from \textit{i.} that $X_i$ and $X_j$ are quantum isomorphic.
	Indeed, the submatrix of $u$ with rows indexed by $V(X_i^\alpha)$ and columns indexed by $V(X_{j}^{\beta})$, with its entries restricted to the range of $e_{i\alpha, j\beta}$ produces a magic unitary intertwining the adjacency matrices $A_{X_i}$ and $A_{X_j}$.
	\textit{iv.} is an immediate consequence of \textit{iii.}
\end{proof}

The quantum automorphism group of a disjoint union of coloured graphs was described in~\cite[Lemma 5.4 and Theorem 5.6]{Q_Aut_Trees} (see also \cite[Thm 5.3]{meunier2023quantum}).

\begin{proposition}
	\label{lem:disjoint-union}
	Let $\{X_i\}_{i \in [n]}$ be pairwise non-quantum isomorphic coloured graphs.
	The quantum automorphism group of the disjoint union $X = \bigsqcup_{i=1}^n \left(\bigsqcup_{\alpha=1}^{k_i} X_i^\alpha\right)$, where each $X_i^\alpha$ is an isomorphic copy of $X_i$, is the given by $\bigast_{i=1}^n \left(\Qut(X_i) \fwr \bbS_{k_i}^+\right)$.
\end{proposition}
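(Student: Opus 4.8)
The plan is to deduce the result by combining the two structural theorems already recorded in the preliminaries: the free product decomposition for disjoint unions whose components are pairwise non-quantum isomorphic (\cite[Lemma 6.4]{Q_Aut_Trees}), and the Banica--Bichon description of the quantum automorphism group of several copies of a single connected graph as a free wreath product (\cite[Theorem 6.1]{BanicaBichon}). The only genuine work is to regroup the components of $X$ correctly and to verify that the hypotheses of these two theorems are met.

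First I would regroup the disjoint union by quantum-isomorphism type, writing $X = \bigsqcup_{i=1}^n Y_i$ with $Y_i \coloneqq \bigsqcup_{\alpha=1}^{k_i} X_i^\alpha$. Every connected component of $Y_i$ is a copy of $X_i$, so for $i \neq j$ no component of $Y_i$ is quantum isomorphic to a component of $Y_j$; this is exactly the hypothesis $X_i \not\cong_q X_j$. At the level of the magic unitary this is precisely the vanishing $u_{pq} = 0$ for $p \in V(X_i^\alpha)$, $q \in V(X_j^\beta)$ with $i \neq j$, recorded in \autoref{lem:disjoint-union-magic-unitary}(iii), which is what forces the free product splitting across types. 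Applying the free product theorem to the decomposition $X = \bigsqcup_i Y_i$ therefore yields
\[
\Qut(X) \;\cong\; \bigast_{i=1}^n \Qut(Y_i).
\]

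Next I would analyse each factor $Y_i$ separately. Since $X_i$ is connected and $Y_i$ is the disjoint union of $k_i$ isomorphic copies of it, all carrying the same colouring, the Banica--Bichon theorem applies and gives $\Qut(Y_i) \cong \Qut(X_i) \fwr \bbS_{k_i}^+$. Substituting this into the free product decomposition above produces $\Qut(X) \cong \bigast_{i=1}^n \bigl(\Qut(X_i) \fwr \bbS_{k_i}^+\bigr)$, which is the claim. Because both cited theorems are isomorphisms of compact matrix quantum groups that match fundamental representations, and hence comultiplications, the composite identification is an isomorphism of quantum groups, so no separate verification of the coproduct is required.

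The step demanding the most care is the first one: one must confirm that "$X_i$ pairwise non-quantum isomorphic" really rules out any quantum isomorphism between components lying in distinct groups $Y_i, Y_j$, and that the connectedness of each $X_i$ is genuinely used. Connectedness is what guarantees that within a single group $Y_i$ all components are mutually (quantum) isomorphic copies, so that a \emph{free wreath product}, rather than a further free product, is the correct local description. Everything else reduces to a direct appeal to the two quoted theorems.
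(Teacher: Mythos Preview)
Your approach is correct and is essentially the argument behind the cited references: the paper does not give its own proof of this proposition but simply attributes it to \cite[Lemma~5.4 and Theorem~5.6]{Q_Aut_Trees} (and \cite[Thm~5.3]{meunier2023quantum}). Your decomposition $X=\bigsqcup_i Y_i$ followed by the two preliminary theorems already recalled in the paper (free product for pairwise non-quantum-isomorphic pieces, then Banica--Bichon's free wreath product for $k_i$ identical copies) is exactly how one unpacks that citation, so there is nothing to compare beyond noting that you have reconstructed the standard proof rather than appealed to it.

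One minor point worth making explicit: you correctly observe at the end that connectedness of each $X_i$ is genuinely used, both to ensure that the connected components of $Y_i$ are precisely the copies $X_i^\alpha$ (so that the hypothesis of the free product theorem is met) and to invoke the Banica--Bichon theorem. The proposition as stated in the paper omits this hypothesis, but it is implicit from the surrounding context (cf.\ \autoref{lem:disjoint-union-magic-unitary}, which does assume the $X_i$ connected). Your proof is therefore complete for the intended statement.
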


We now give an alternate description using the inhomogeneous free wreath product.

\begin{theorem}
	\label{thm:disjoint-union}
	Let $\{X_i\}_{i \in [n]}$ be pairwise non-quantum isomorphic coloured graphs.
	The quantum automorphism group of the disjoint union $X = \bigsqcup_{i=1}^n \left(\bigsqcup_{\alpha=1}^{k_1} X_i^j\right)$, where each $X_i^\alpha$ is an isomorphic copy of $X_i$, is the given by $(\Qut(X_1),\dots,\Qut(X_n)) \fgwr (\bigast_{i=1}^n \bbS_{k_i}^{+})$.
\end{theorem}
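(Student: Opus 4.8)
The plan is to reduce everything to the free-product description already obtained in \cref{lem:disjoint-union}, namely $\Qut(X) = \bigast_{i=1}^{n}\bigl(\Qut(X_i)\fwr\bbS_{k_i}^{+}\bigr)$, by showing that the free inhomogeneous wreath product on the right-hand side of the statement is canonically isomorphic to this free product. The essential point is that taking $\bbH = \bigast_{i=1}^{n}\bbS_{k_i}^{+}$ forces the free inhomogeneous wreath product to split as a free product indexed by $i$, after which each factor is recognised as an ordinary free wreath product.

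First I would pin down the action of $\bbH=\bigast_{i=1}^{n}\bbS_{k_i}^{+}$. Writing $\Omega = \bigsqcup_{i=1}^{n}[k_i]$, the fundamental representation of a free product is the block-diagonal magic unitary $v^{(1)}\oplus\dots\oplus v^{(n)}$, so $h_{\alpha\beta}=0$ whenever $\alpha,\beta$ lie in different blocks, while inside the $i$-th block every entry $v^{(i)}_{\alpha\beta}$ is a nonzero projection. Hence the orbits of $\bbH$ are exactly the sets $\Omega_i=[k_i]$, and these are the orbit data $\Omega_1,\dots,\Omega_n$ needed to feed $\bbG_i=\Qut(X_i)$ (acting on $\Lambda_i = V(X_i)$) into \cref{cstr:FIWP}.

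Next I would unfold $C(\bbF)$. Substituting $C(\bbH)=\bigast_{i=1}^{n}C(\bbS_{k_i}^{+})$ into the free product $\mathcal{A}$ of \cref{cstr:FIWP} and using associativity and commutativity of the free product, one regroups $\mathcal{A}$ as $\bigast_{i=1}^{n}\bigl[\bigl(\bigast_{\alpha\in\Omega_i}C(\bbG_{i,\alpha})\bigr)\ast C(\bbS_{k_i}^{+})\bigr]$. The defining relations $[g^{(i,\alpha)}_{pq},h_{\alpha\beta}]=0$ only involve generators carrying the same orbit index $i$ (they are indexed by $\alpha\in\Omega_i$, and $h_{\alpha\beta}=0$ across orbits anyway), so the quotient distributes over the free product: $C(\bbF)\cong\bigast_{i=1}^{n}C(\bbF_i)$, where $C(\bbF_i)$ is the quotient of the $i$-th bracketed factor by $[g^{(i,\alpha)}_{pq},v^{(i)}_{\alpha\beta}]=0$. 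By \cref{def:free-wreath-product} this is precisely $C(\Qut(X_i)\fwr\bbS_{k_i}^{+})$; equivalently, $\bbF_i$ is a free inhomogeneous wreath product of $k_i$ copies of the single group $\Qut(X_i)$ by the transitive group $\bbS_{k_i}^{+}$, which equals $\Qut(X_i)\fwr\bbS_{k_i}^{+}$ by \cref{prop:free_hom_are_inhom}(i).

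Finally I would verify that this $C^{*}$-isomorphism is an isomorphism of compact matrix quantum groups, i.e.\ that it intertwines the fundamental representations. Since $h_{\alpha\beta}=0$ when $\alpha,\beta$ lie in different orbits, the matrix $f$ of \cref{cstr:FIWP} is block diagonal with respect to the partition of $\Lambda$ into the $\Omega_i\times\Lambda_i$, and its $i$-th diagonal block has entries $f_{\alpha p,\beta q}=v^{(i)}_{\alpha\beta}\,g^{(i,\alpha)}_{pq}$, which (the factors commuting) is exactly the fundamental representation $w_{(\alpha,p)(\beta,q)}=g^{(i,\alpha)}_{pq}v^{(i)}_{\alpha\beta}$ of $\Qut(X_i)\fwr\bbS_{k_i}^{+}$. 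Thus $f=\bigoplus_{i=1}^{n}w^{(i)}$ is the fundamental representation of $\bigast_{i=1}^{n}\bigl(\Qut(X_i)\fwr\bbS_{k_i}^{+}\bigr)$, and since the comultiplication of a compact matrix quantum group is determined by its fundamental representation, the identification is a quantum group isomorphism; combining with \cref{lem:disjoint-union} yields the claim. The step deserving the most care, and the main obstacle, is precisely this distribution of the relation quotient over the free product together with the compatibility of $f$ with the block structure: one must check that regrouping the free product loses no cross-orbit relations and that the resulting block-diagonal $f$ genuinely reassembles the free-product fundamental representation, so that the identification is an isomorphism of quantum groups and not merely of $C^{*}$-algebras.
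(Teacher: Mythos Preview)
Your proposal is correct and takes a genuinely different route from the paper. The paper argues directly: it shows that the fundamental representation $W$ of the free inhomogeneous wreath product commutes with $A_X$, giving a surjection $C(\Qut(X))\to C(\bbF)$, and then builds the inverse by hand using \cref{lem:disjoint-union-magic-unitary} (the projections $e_{i\alpha,j\beta}$ assemble into a representation of $\bigast_i\bbS_{k_i}^{+}$, the blocks of $u$ give representations of the $\Qut(X_i)$, and one checks the commutation relations). You instead reduce to \cref{lem:disjoint-union} and prove the purely algebraic fact that $(\bbG_1,\dots,\bbG_n)\fgwr\bigl(\bigast_i\bbS_{k_i}^{+}\bigr)\cong\bigast_i(\bbG_i\fwr\bbS_{k_i}^{+})$, by regrouping the defining free product and observing that the commutation relations distribute over the factors. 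This is exactly the isomorphism the paper records \emph{afterwards} as a remark; you simply run the logic in reverse. Your argument has the virtue of isolating a general structural statement about the free inhomogeneous wreath product (it splits over a free-product base), independent of any graph theory, while the paper's proof is self-contained and does not appeal to the external result \cref{lem:disjoint-union}. Your step distributing the quotient over the free product is sound (it follows from the universal property, since each relation lives in a single factor), and your verification that $f$ is block diagonal with blocks equal to the free-wreath-product fundamental representations is the right way to upgrade the $C^*$-isomorphism to a quantum group isomorphism.
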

\begin{proof}
	
	Let $h$, $g^{i,\alpha}$, $W$, and $w$ denote the fundamental representations of $\bigast_{i=1}^n \bbS_{k_i}^{+}$, the $\alpha$-th copy of $\Qut(X_i)$, $(\Qut(X_1),\dots,\Qut(X_n))\gwr_* (\bigast_{i=1}^n \bbS_{k_i}^{+})$, and $\Qut(X)$ respectively.
	Let us also denote the orbits of $\bigast_{i=1}^n \bbS_{k_i}^{+}$ by $\Omega_i,\dots \Omega_n$, where $\card{\Omega_i} = k_i$ for all $i \in [n]$.

	We begin by showing that $A_XW = WA_X$.
	By the universal property of $\Qut(X)$, this will imply that there is a $\ast$-homomorphism
	\begin{equation*}
	C(\Qut(X)) \to C((\Qut(X_1),\dots,\Qut(X_n))\gwr_* (\bigast_{i=1}^n \bbS_{k_i}^{+}))
	\end{equation*}
	mapping $w_{\alpha p, \beta q}$ to $W_{\alpha p, \beta q}$.
	Note that because $W$ is a fundamental representation, this will necessarily be surjective.

	Therefore, consider $W_{\alpha p, \beta q} W_{\alpha' p', \beta' q'}$, where $\rel(\alpha p, \alpha ' p') \neq \rel(\beta q, \beta'q')$.
	Because $W$ is a magic unitary, it suffices consider the case where $\alpha p \sim \alpha ' p'$, but $\beta q \nsim \beta'q'$.
	If $\alpha, \beta \in \Omega_i$ and $\alpha', \beta' \in \Omega_j$, we have: $W_{\alpha p, \beta q} W_{\alpha' p', \beta' q'} = h_{\alpha \beta} g^{i,\alpha}_{p,q} h_{\alpha'\beta'} g^{j,\alpha'}_{\alpha'\beta'}$.
	Since $\alpha p \sim \alpha ' p'$, we have $\alpha = \alpha'$, and $p \sim p'$.
	Similarly, since $\beta q \nsim \beta'q'$, we have that either $\beta \neq \beta'$, or $q \nsim q'$.
	If $\beta \neq \beta'$, then 
	\begin{align*}
		W_{\alpha p, \beta q} W_{\alpha' p', \beta' q'} & = h_{\alpha \beta} g^{i,\alpha}_{pq} h_{\alpha'\beta'} g^{j,\alpha'}_{p'q'} \\
		& = h_{\alpha \beta} g^{i,\alpha}_{pq} h_{\alpha\beta'} g^{j,\alpha}_{p'q'} \\
		& =  g^{i,\alpha}_{pq} h_{\alpha \beta} h_{\alpha\beta'} g^{j,\alpha}_{p'q'} \\
		& = 0.
	\end{align*}
	On the other hand, if $\beta = \beta'$ and $q \nsim q'$, then $i = j$.
	Hence,   
	\begin{align*}
		W_{\alpha p, \beta q} W_{\alpha' p', \beta' q'} & = h_{\alpha \beta} g^{i,\alpha}_{pq} h_{\alpha'\beta'} g^{j,\alpha'}_{\alpha'\beta'} \\
		& = h_{\alpha \beta} g^{i,\alpha}_{pq} h_{\alpha\beta} g^{i,\alpha}_{\alpha\beta}\\
		& =   h_{\alpha \beta}g^{i,\alpha}_{pq}  g^{i,\alpha}_{p'q'} h_{\alpha\beta} \\
		& = 0,
	\end{align*}
	by the definition of $\Qut(X_i)$.
	
	We now build a surjective $*$-homomorphism in the opposite direction.
	For each $i, \alpha \in \Omega_i$, it follows from~\cref{lem:disjoint-union-magic-unitary} that the submatrix of $w$ indexed by vertices of $X_{i}^{\alpha}$ has rows and columns summing to the same projection $e_{i\alpha, i\alpha}$.
	By the universal property of $\Qut(X_i)$, there is a $*$-homomorphism $\varphi_{i,\alpha}:C(\Qut(X_i)) \to C(\Qut(X))$ mapping $g^{i,\alpha}_{p,q}$ to $w_{\alpha p, \alpha q}$.
	
	Next, consider the magic unitary $e$ from~\cref{lem:disjoint-union-magic-unitary}.
	Since $e_{i\alpha, j\beta} = 0$ for $i \neq j$, by the universal property of $\bigast_{i=1}^n \bbS_{k_i}^{+}$, we see that there exists a $\ast$-homomorphism $\varphi'$ from $C(\bigast_{i=1}^n \bbS_{k_i}^{+})$ to $C(\Qut(X))$ mapping $h_{i\alpha, j\beta}$ to $e_{i\alpha, j\beta}$.
	
	By the universal property of the free product, there is a $\ast$-homomorphism
	\begin{equation*}
	\varphi^{\prime\prime} : \left(\bigast_{i \in [n]}\left(\bigast_{\alpha \in \Omega_i}C(\Qut(X^{i,\alpha}))\right)\right)\ast \left(\bigast_{i=1}^n \bbS_{k_i}^+\right) \to C(\Qut(X))
	\end{equation*}
	extending $\varphi'$ and $\varphi_{i,\alpha}$ for all $i \in [n]$ and $\alpha \in \Omega_i$.
	It is also clear from \cref{lem:disjoint-union-magic-unitary} that for all $i \in [n]$ and $\alpha \in \Omega_i$, $\varphi^{\prime\prime}(f_{i\alpha, j\beta}) = e_{i\alpha, j\beta}$ and $\varphi^{\prime\prime}(g^{i,\alpha}_{pq}) = w_{\alpha p,\beta q}$ commute.
	Hence, $\varphi^{\prime\prime}$ also induces a $\ast$-homomorphism $\varphi$ from $C((\Qut(X_1),\dots,\Qut(X_n))\gwr_* (\bigast_{i=1}^n \bbS_{k_i}^{+}))$ to $C(\Qut(X))$.
	It is also clear that $\varphi$ maps $W_{i\alpha, j\beta}$ to $w_{i\alpha, j\beta}$, which implies that $\varphi$ is surjective.
	This establishes that $\Qut(X)\cong (\Qut(X_1),\dots,\Qut(X_n))\gwr_* (\bigast_{i=1}^n \bbS_{k_i}^{+})$.
\end{proof}

\begin{remark}
	The previous theorem implies that there is a compact quantum group isomorphism
	\[ (\Qut(X_1),\dots,\Qut(X_n))\gwr_* (\bigast_{i=1}^n \bbS_{k_i}^{+}) \cong \bigast_{i=1}^n \left(\Qut(X_i) \wr_* \bbS_{k_i}^+\right). \] 
\end{remark}

\begin{remark}
	\label{rmk:quantum-isomorphic-pair}
	\autoref{thm:disjoint-union} describes the quantum automorphism group of the disjoint union of graphs $X_1,\ldots,X_n$ such that, for all $i,j \in [n]$, the graphs $X_i$ and $X_j$ are either isomorphic or not quantum isomorphic.
	If $X_1$ and $X_2$ are non-isomorphic connected graphs that are quantum isomorphic, then $\Qut(X_1 \sqcup X_2)$ can not be described in this way.
	Indeed, by \cite[Theorem~4.5]{lupini2020nonlocal}, there is an orbit of $\Qut(X_1 \sqcup X_2)$ that intersects both $V(X_1)$ and $V(X_2)$ nontrivially.
	The free inhomogeneous wreath product $(\Qut(X_1),\Qut(X_2)) \gwr_* (\bbS_1^+ * \bbS_1^+) \cong \Qut(X_1) * \Qut(X_2)$ does not have this property, so it is different from $\Qut(X_1 \sqcup X_2)$.
	On the other hand, the free inhomogeneous wreath product $(\Qut(X_1),\Qut(X_2)) \gwr_* \bbS_2^+$ has no well-defined action on $X_1 \sqcup X_2$ since $X_1 \not\cong X_2$, so this also does not describe $\Qut(X_1 \sqcup X_2)$.
	
	It was proved in \cite[Theorem~5.3]{Quantum_Sabidussi} that the quantum automorphism group of an arbitrary disjoint union of graphs can be described in terms of the quantum automorphism groups $\Qut(X_i)$ and the \emph{quantum isomorphism algebras} $\QIso(X_i,X_j)$, as defined in \cite{helton_algebras_nodate}.
\end{remark}

\section{Quantum automorphism groups of connected graphs}
\label{sec:qut_connected}

In this section, we present a general procedure to decompose the quantum automorphism group of a connected graph in terms of certain connected subgraphs, provided that they are either isomorphic or non quantum isomorphic. The key idea is to show that quantum automorphisms preserve the block structure of a connected graph and that they respect ``levels," i.e., showing that quantum automorphisms map blocks on the same level of the block tree to each other. These results will allow us to define an inductive procedure to construct quantum automorphism groups of certain families of graphs in the next section.  

We begin by introducing some notation.
Let $X$ be a connected graph that is not biconnected.
Let $B \in \calB(X)$ be any block other than the central one in $T(X)$.
Then, there is a unique cut vertex $v \in V(B)$, that is a parent of $B$ in the block tree.
We define $\widetilde{B}$, to be the subgraph of $B$ induced by $V(B)\setminus \{v\}$.
If $B_j$ is the central block, we define $\widetilde{B} = B$.
Note that for each vertex $v \in V(X)$, there is a unique $B \in \calB(X)$ such that $v \in V(\widetilde{B})$.
 
As a first result, we observe that since cut vertices and biconnected components are distinguished by the Weisfeiler-Leman algorithm, they are preserved by quantum automorphism. Formally, one has the following (see also~\cite{freslon2025block} for a proof not involving $\wl_X$): 

\begin{lemma}
	\label{lem:2-WL-Blocks}
	Let $X$ be a graph that and $u$ be the fundamental representation of $\Qut(X)$.
	Then, one has the following: 
	\begin{enumerate}[label = \roman*.]
		\item If $p,q \in V(X)$ are in the same block of $X$ and $p',q'\in V(X)$ are not, then $u_{pp'}u_{qq'} = 0$.
		
		\item If $p \in V(X)$ is a cut vertex and $q\in V(X)$ is not, then $u_{pq} = 0$.
	\end{enumerate}
\end{lemma}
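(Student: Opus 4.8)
The plan is to reduce both statements to \cref{lem:wl-quantum-automorphims}, by showing that the stable Weisfeiler--Leman colouring $\wl_X$ already ``sees'' the block structure. It is convenient to rewrite \cref{lem:wl-quantum-automorphims} in the equivalent form $u_{ab}u_{cd} = 0$ whenever $\wl_X(a,c) \neq \wl_X(b,d)$ (a mere renaming of indices). I would then isolate two purely combinatorial claims: \textbf{(a)} for $p \neq q$, whether $p$ and $q$ lie in a common block is determined by the colour $\wl_X(p,q)$, i.e.\ the set of pairs lying in a common block is a union of $\wl_X$-colour classes; and \textbf{(b)} whether a vertex $v$ is a cut vertex is determined by the diagonal colour $\wl_X(v,v)$. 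Granting these, part \textit{i.} is immediate: if $p,q$ lie in a common block while $p',q'$ do not, then $\wl_X(p,q) \neq \wl_X(p',q')$ by (a), so $u_{pp'}u_{qq'} = 0$. For part \textit{ii.}, if $p$ is a cut vertex and $q$ is not, then $\wl_X(p,p) \neq \wl_X(q,q)$ by (b); taking $a = c = p$ and $b = d = q$ in the rewritten lemma gives $u_{pq}^2 = 0$, and since $u_{pq}$ is a self-adjoint projection this forces $u_{pq} = 0$.

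It remains to prove the combinatorial claims (a) and (b), which is where the real work lies. The natural starting point is the standard fact that $\wl_X$ refines the distance partition: for every $d$ (including $d = \infty$) the relation $\dist(x,y) = d$ is a union of $\wl_X$-colour classes. This follows by induction on $d$ from the refinement step, since $c_0$ already separates equal, adjacent and non-adjacent pairs, and a non-adjacent pair is at distance $d$ exactly when the path-counting quantities $\Delta_{ij}$ witness a walk of length $d$ but none shorter through the appropriate colours. Building on distance-invariance, I would characterise block membership in $\wl_X$-visible terms using the fact recalled in \cref{subsec:prelim:graphs} that, for $p \neq q$, the vertices lie in a common block precisely when $pq \in E(X)$ or they lie on a common cycle, i.e.\ (by Menger) when they are joined by two internally vertex-disjoint paths; equivalently, when no cut vertex separates them. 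The target is to show that the existence of such a pair of disjoint paths is invariant under equality of $\wl_X$-colours, and, symmetrically, that $v$ is separating exactly when removing $v$ disconnects some pair of its neighbours — a condition I would argue is forced by the stable colouring, so that both blocks and cut vertices come out as unions of colour classes.

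The main obstacle is exactly this last point: translating the \emph{global} notions ``lies on a common cycle'' and ``is a cut vertex'' into the \emph{local}, count-based data recorded by the stable $2$-dimensional colouring. The cleanest route I see is to work with the coherent configuration determined by $\wl_X$ and to show, by induction on distance and using the refinements $\Delta_{ij}$, that two equally coloured pairs admit matching systems of internally disjoint connecting paths; one then deduces that the blocks, and hence the cut vertices, are unions of $\wl_X$-colour classes, establishing (a) and (b). Once these are in hand, the deduction in the first paragraph closes the proof. I note that a Weisfeiler--Leman--free argument for the same two conclusions is available through the block-structure analysis of \cite{freslon2025block}.
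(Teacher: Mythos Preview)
Your reduction strategy is exactly the paper's: isolate the combinatorial claims (a) and (b) about $\wl_X$, then invoke \cref{lem:wl-quantum-automorphims}. The only difference is that the paper does not reprove (a) and (b) from scratch but cites them directly from the literature (Theorem~6 and Corollary~7 of \cite{kiefer2019weisfeiler}); your sketch of an independent argument via coherent configurations and induction on distance is plausible but not fully fleshed out, and since these are established results you could simply cite them as the paper does.
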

\begin{proof}
	It follows from {\cite[Theorem 6]{kiefer2019weisfeiler}} that if $p,q \in V(X)$ are in the same block and $p',q' \in V(X)$ are not then, $\wl_X(p,q) \neq \wl_X(p',q')$.
	Hence, it follows from \cref{lem:wl-quantum-automorphims} that $u_{pp'}u_{qq'} = 0$.
	Similarly, \textit{ii.} follows from \cite[Corollary 7]{kiefer2019weisfeiler} and \cref{lem:wl-quantum-automorphims}.
	See also \cite[Lemma 3.5]{freslon2025block} for an alternative proof.
\end{proof}

Now, we wish to show that blocks at the same ``level" of the block tree are mapped to each other by quantum automorphisms.
We first introduce some terminology, so that we can state this more precisely.
The main point is to decompose the graph using the position of its blocks in the corresponding block tree.
For this, the following notion of \emph{level} will be useful

\begin{definition}
	Let $X$ be a connected graph that is not biconnected.
	We set $T_{0} = T(X)$ and for an integer $i$, we denote by $T_{i+1}$ the tree obtained by removing all the leaves of $T_{i}$.
	Given a vertex $v\in V(T(X))$, there exists a unique integer $i$ such that $v$ is a leaf of $T_{i}$.
	We call it the \emph{level} of $v$ and write $\mathrm{lev}(v) = i$.
\end{definition}

Let us gather some straightforward properties of this notion.

\begin{lemma}
	\label{lem:level_definition}
	Let $X$ be a graph that is not biconnected.
	Then, the following hold true: 
	\begin{enumerate}[label=\roman*.]
		\item If $B \in \calB(X)$ is a block that is a leaf in $T(X)$, then $\lev(B) = 0$.
		\item If $B \in \calB(X)$ and $v \in \calC(X)$, then $\lev(B)$ is even and $\lev(v)$ is odd.
		\item If $v$ is the central node of $T(X)$, then $\lev(v) > \lev(u)$ for all $u \in V(T(X)) \setminus \{v\}$.
	\end{enumerate}
\end{lemma}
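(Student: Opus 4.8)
The plan is to verify the three claims directly from the definition of level via the iterated leaf-removal process $T_0 = T(X), T_1, T_2, \dots$, together with the bipartite structure of the block tree. Recall that $T(X)$ is bipartite with parts $\calB(X)$ and $\calC(X)$, and that every leaf of $T(X)$ is a block node (since every cut vertex lies in at least two blocks and hence has degree at least $2$ in $T(X)$).

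\begin{proof}
\textit{i.} If $B \in \calB(X)$ is a leaf of $T(X) = T_0$, then by definition $\lev(B) = 0$.

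\textit{ii.} We argue that the parity of the level of a node equals the parity of its type: block nodes get even levels and cut nodes get odd levels. The key observation is that removing the leaves of $T_i$ to form $T_{i+1}$ switches which part of the bipartition forms the current set of leaves. Concretely, we prove by induction on $i$ that every leaf of $T_i$ is a block node when $i$ is even and a cut node when $i$ is odd. For the base case $i = 0$, every leaf of $T_0 = T(X)$ is a block node, as noted above. For the inductive step, suppose every leaf of $T_i$ has the type dictated by the parity of $i$; these are exactly the nodes of level $i$. After deleting them to obtain $T_{i+1}$, each new leaf $w$ of $T_{i+1}$ was adjacent in $T_i$ to at least one deleted node, and since $T(X)$ is bipartite, $w$ lies in the opposite part from the deleted level-$i$ nodes. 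Hence the leaves of $T_{i+1}$ all have the opposite type, completing the induction. Since $\lev(v) = i$ precisely when $v$ is a leaf of $T_i$, this shows $\lev(B)$ is even for every block node $B$ and $\lev(v)$ is odd for every cut node $v$.

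\textit{iii.} The iterated leaf-removal process is exactly the standard procedure that computes the Jordan center of a tree, and the last node(s) to survive form the center. More precisely, the maximum level is attained exactly at the center of $T(X)$. Since $X$ is connected, the discussion preceding the definition shows that $Z(T(X))$ is a single node, namely the central node $v$. I would make this precise by noting that the final nonempty tree $T_N$ in the sequence (where $N$ is the largest index with $T_N \neq \emptyset$) consists of the center, and that $\lev$ assigns strictly smaller values to all nodes removed at earlier stages: if $u$ is deleted at stage $j < N$ then $\lev(u) = j < N = \lev(v)$. Because $Z(T(X))$ has size $1$, the maximum level is attained uniquely at $v$, giving $\lev(v) > \lev(u)$ for all $u \neq v$.
\end{proof}

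The only genuinely substantive point is part \textit{iii.}, where I must invoke the fact (established in the preamble) that the center of a block tree is a single node; everything else is a parity induction keyed to the bipartiteness of $T(X)$, and I expect part \textit{ii.} to be the part most needing care in tracking the base case (that all leaves of $T(X)$ are block nodes).
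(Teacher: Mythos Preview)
Your proof is correct. The paper does not actually prove this lemma; it introduces it with the phrase ``Let us gather some straightforward properties of this notion'' and states it without proof, so there is no paper argument to compare against. Your parity induction for part~\textit{ii.} (using that all leaves of $T(X)$ are block nodes, which the paper records in \S\ref{subsec:prelim:graphs}) and your identification of iterated leaf-removal with the Jordan-center computation for part~\textit{iii.} are exactly the standard arguments one would expect, and they go through cleanly.
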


Let us introduce two further notations:
\begin{itemize}
	\item $\calB_k(X) \coloneq \{B \in \calB(X) \mid \lev(B) = k\}$, where $k \in \bbN$ is an even number.
	\item $\calC_k(X) \coloneq \{\alpha \in \calC(X) \mid \lev(\alpha) = k \}$, where $k \in \bbN$ is an odd number.
\end{itemize}

The main technical tool in the sequel is the fact that quantum automorphisms respect the level, so that we can work inductively.
Proving this requires some facts relating quantum automorphisms and the levels of blocks and cut-vertices, that we gather in the next lemma.

\begin{lemma}
	\label{lem:levels_are_preserved}
	Let $X$ be a graph and $u$ be the fundamental representation of $\Qut(X)$.
	Then, the following hold true: 
	\begin{enumerate}[label=\roman*.]
		\item Let $L(X) = \cup_{B: \lev(B) = 0} V(\widetilde{B})$. Then, $L(X)$ is a union of orbits of $\Qut(X)$, i.e., $u_{pq}= 0$ if $p \in L(X)$ and $q \notin L(X)$.
		
		\item If $p,q \in V(X)$ are vertices such that $p \in V(\widetilde{B_i})$ and $q \in V(\widetilde{B_j})$, for some blocks $B_i, B_j \in \calB(X)$ with $\lev(B_i) \neq \lev(B_j)$, then $u_{pq} = 0$.
		
		\item Let $B_i$ be a non-central block and let $p \in \calC(X) \cap V(B_i)$ be the unique parent of $B_i$ in $T(X)$.
		Let $B_j \in \calB(X)$, be another block such that $\lev(B_i)=\lev(B_j)$.
		Then $u_{pq} = 0$ for all $q \in V(\widetilde{B_j})$.
		
		\item If $p,q \in \calC(X)$ and $\lev(p) \neq \lev(q)$, then $u_{pq} = 0$.
		
		\item In particular, the center of $T(X)$ is a union of orbits of $\Qut(X)$.
	\end{enumerate}
\end{lemma}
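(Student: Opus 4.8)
The plan is to prove all five parts simultaneously by induction on the depth of the block tree $T(X)$, taking part (i) as the bottom layer and peeling one block–level off at each step. The engine is a \emph{restriction--transfer principle}: if $W\subseteq V(X)$ is a union of orbits of $\Qut(X)$ (so $u_{pq}=0$ whenever exactly one of $p,q$ lies in $W$), then for $p,q\in W$ one has $(A_Xu)_{pq}=\sum_{r\sim p,\ r\in W}u_{rq}=(A_{X[W]}\,u|_W)_{pq}$ and similarly $(uA_X)_{pq}=(u|_W\,A_{X[W]})_{pq}$, while the row and column sums of $u|_W$ over $W$ are $1$; hence $u|_W$ is a magic unitary commuting with the adjacency matrix of the induced subgraph $X[W]$. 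By the universal property of $\Qut(X[W])$ there is a $\ast$-homomorphism $\phi\colon C(\Qut(X[W]))\to C(\Qut(X))$ sending the fundamental representation of $\Qut(X[W])$ to $u|_W$, so every vanishing relation $u^{X[W]}_{pq}=0$ transfers to $u_{pq}=0$. This is what lets me replace $X$ by the smaller graph $X^{(1)}\coloneqq X[V(X)\setminus L(X)]$ once (i) is known.

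I expect part (i) to be the main obstacle, so I would settle it first and directly. Note that $L(X)$ consists precisely of the non-cut vertices lying in a block with a single cut vertex (a leaf block), while its complement is made of cut vertices together with non-cut vertices of blocks having at least two cut vertices. If $p\in L(X)$ and $q$ is a cut vertex, then $u_{pq}=0$ by \cref{lem:2-WL-Blocks}(ii), using that $\sim_1$ is symmetric so cut/non-cut is preserved in both directions. For the remaining case ($p$ non-cut in a leaf block, $q$ non-cut in a non-leaf block) I would use the stable Weisfeiler--Leman colouring: \cref{lem:2-WL-Blocks} shows that both the property of being a cut vertex and the same-block relation are unions of colour classes, and since stable $2$-dimensional colourings are coherent, the out-valency counting the cut vertices sharing a block with $p$ is constant on each colour class. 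Thus $\wl_X(p,p)$ determines the number of cut vertices in the block of $p$, so $\wl_X(p,p)\neq\wl_X(q,q)$, and \cref{lem:wl-quantum-automorphims} (with $p=p'$, $q=q'$) gives $u_{pq}^2=0$, i.e.\ $u_{pq}=0$ since $u_{pq}$ is a projection. The delicate point is justifying that the coherent-configuration/counting structure of stable WL really recovers ``number of cut vertices in my block''; this is where I would be most careful.

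With (i) available, (ii) follows from the induction. The key combinatorial fact is that removing $L(X)=V_0$, where $V_k\coloneqq\bigcup_{\lev(B)=k}V(\widetilde B)$, yields a connected graph $X^{(1)}$ whose block tree is $T(X)$ with its bottom two levels peeled, so $\lev_{X^{(1)}}(B)=\lev_X(B)-2$ for every surviving block; iterating $X^{(j+1)}=X^{(j)}[V\setminus L(X^{(j)})]$ one gets $L(X^{(j)})=V_{2j}(X)$. Granting this, each $V(X^{(j)})$ is a union of orbits of $\Qut(X)$ (by (i) applied to $X^{(j)}$ and transferred through the composite $\phi$), which proves (ii): for $p\in V(\widetilde{B_i})$, $q\in V(\widetilde{B_j})$ with $\lev(B_i)\neq\lev(B_j)$, either one block is at level $0$ and (i) applies directly, or both survive in $X^{(1)}$ with distinct shifted levels and the inductive hypothesis for $X^{(1)}$ transfers back through $\phi$.

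Part (iv) needs a separate idea, because a cut vertex's level is not its parent-block level. Given cut vertices $p,q$ with $\lev(p)\neq\lev(q)$, by (ii) I may assume equal parent-block level; writing $\lev(p)=2a_1+1<\lev(q)=2a_2+1$, the vertex $p$ becomes non-cut once its child blocks (all of level $\le 2a_1$) are removed, i.e.\ in $X^{(a_1)}$, whereas $q$ still has a child block of level $2a_2>2a_1$ and so remains a cut vertex of $X^{(a_1)}$, and both survive there since their common parent block has strictly larger level. Hence \cref{lem:2-WL-Blocks}(ii) applied in $\Qut(X^{(a_1)})$ gives $u^{(a_1)}_{pq}=0$, which transfers to $u_{pq}=0$. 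Finally (iii) and (v) are corollaries: for (iii), the parent cut vertex $p$ of a non-central block $B_i$ lies in $V(\widetilde B)$ for the parent block $B$ of $p$, with $\lev(B)>\lev(B_i)=\lev(B_j)$, so (ii) gives $u_{pq}=0$ (the case $p$ central being covered by (v)); for (v), the centre is the unique top node, hence either a central block whose interior is the top layer $V_{\max}$ (a union of orbits by (ii)) or a central cut vertex, the unique cut vertex of its level, which by (iv) satisfies $u_{pp}=1$. Besides (i), the points to nail down are the combinatorial bookkeeping that peeling matches the level shift exactly (including the biconnected base case, the central-node boundary, and the convention for $\widetilde B$ at a central cut vertex) and the connectivity of each $X^{(j)}$, which is needed for both the induction and the transfer principle.
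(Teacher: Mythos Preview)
Your approach is essentially the paper's: establish (i) via Weisfeiler--Leman (the paper simply cites \cite[Lemma~8]{kiefer2019weisfeiler}, while you unpack the coherent-configuration counting argument yourself, which is fine), then peel off $L(X)$ and transfer orbit information from $\Qut(X^{(j)})$ back to $\Qut(X)$ (the paper invokes \cite[Lemma~3.6]{Q_Aut_Trees} for exactly the restriction--transfer principle you spell out) to obtain (ii) inductively, with (iii) and (v) as corollaries.

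The one substantive difference is (iv). The paper asserts that a non-central cut vertex $p$ with $\lev(p)=k$ lies in $V(\widetilde B)$ with $\lev(B)=k+1$ and then appeals to (ii); but in general only $\lev(B)\geq k+1$ holds, and two child cut vertices of the \emph{same} block $B$ can have different levels (e.g.\ give $B$ one child whose subtree is a single leaf block and another child whose subtree is a longer chain), so (ii) alone does not separate them. Your argument---peel until $p$ becomes a non-cut vertex while $q$ remains a cut vertex, then apply \cref{lem:2-WL-Blocks}(ii) in the peeled graph and transfer back---correctly handles this case and is therefore more robust than the paper's. One small correction: with your indexing, $X^{(j)}$ has removed $V_0,\dots,V_{2j-2}$, and since $p$ has at least one child block of level exactly $2a_1$, the vertex $p$ first becomes non-cut in $X^{(a_1+1)}$ rather than $X^{(a_1)}$; the rest of the argument goes through unchanged.
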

\begin{proof}
	\textit{i.} It follows from \cite[Lemma 8]{kiefer2019weisfeiler} and \cref{lem:wl-quantum-automorphims}.
	Note that \textit{i.} implies that if $p,q \in V(X)$ are vertices such that $p \in V(\widetilde{B_i})$ and $q \in V(\widetilde{B_j})$, for some blocks $B_i, B_j \in \calB(X)$ with $\lev(B_i) = 0$ and $\lev(B_j) > 0$, then $u_{pq} = 0$.
	
	For \textit{ii.}, consider $p \in V(\widetilde{B_i})$ and $q \in V(\widetilde{B_j})$, for some blocks $B_i, B_j \in \calB(X)$ with $\lev(B_i) = 2$ (recall that blocks have even level) and $\lev(B_j) > 2$.
	Let $\widetilde{X}$ denote the graph induced by $V(X)\setminus L(X)$.
	Since $L(X)$ is a union of orbits of $\Qut(X)$, so is $V(X)\setminus L(X)$.
	Hence, it follows from~\cite[Lemma 3.6]{Q_Aut_Trees} that orbits of $\Qut(\widetilde{X})$ are unions of orbits of $\Qut(X)$.
	
	Now, note that the blocks of $\widetilde{X}$ that are leaves in $T(\widetilde{X})$ are precisely the blocks $B$ of $X$ such that $\lev(B) = 2$.
	Hence, it follows from \textit{i.} that $L(\widetilde{X})$ is a union of orbits of $\Qut(X)$.
	In particular, since $p \in L(\widetilde{X})$ and $q \notin L(\widetilde{X})$, we have $u_{pq} = 0$.
	\textit{ii.} now follows by a repeated application of the same argument.
	
	Point \textit{iii.} is an immediate corollary of \textit{ii.} In order to see \textit{iv.}, note that if $\lev(p) = k$ for some cut vertex $p$, other than the central one (if it exists), then $p \in V(\widetilde{B})$, with $\lev(B) = k+1$.
	Hence, \textit{iv.} follows from \textit{ii.} Lastly, \textit{v.} follows directly from \textit{ii.}, \textit{iv.}, and \cref{lem:level_definition}.
\end{proof}

Let us fix yet some more notation before proceeding to the next result.
Given $p \in V(X)$, we define $\calB^p(X) \coloneq \{B_i \in \calB(X): p \in V(B_i)\}$ and $\calB_k^p(X) \coloneq \calB_k(X) \cap \calB^p(X)$.

\begin{lemma}
	\label{lem:gen-res-1}
	Let $X$ be a graph with blocks $\calB(X) = \{B_{i}\}_{i \in [k]}$, cut vertices $\calC(X)$, and fundamental representation $u$.
	Then, the following hold true: 
	\begin{enumerate}[label = \roman*.]
		\item The submatrix of $u$ indexed by the vertices in $\calC(X)$ is a magic unitary.
		
		\item For all $i,j \in [k]$ and all $p \in V(\widetilde{B_{i}})$, $q \in V(\widetilde{B_{j}})$, 
		\[ e_{ij} = \sum_{p'\in V(\widetilde{B_{i}})}u_{p'q} = \sum_{q'\in V(\widetilde{B_{j}})}u_{pq'} \]
		is a well-defined projection independent of the choice of $p$ and $q$.
		In particular, note that if $\lev(B_i)\neq \lev(B_j)$, then $e_{ij} = 0$.
		
		\item Consider the matrix $W$, indexed by $V(T(X))$, defined as follows: 
		\begin{align*}
			W_{xy} = \begin{cases}
				u_{xy} & \text{ if } x,y \in \calC(X),\\
				e_{xy} & \text{ if } x,y \in \calB(X),\\
				0 & \text{ otherwise.}
			\end{cases}
		\end{align*}
		Then, $W$ is a magic unitary that commutes with $A_{T(X)}$.
		
		\item Let $B_i, B_j$ be two non-central blocks, and let $\alpha \in V(B_i)$ and $\beta \in V(B_j)$ be the unique parents of $B_i$ and $B_j$ in $T(X)$.
		Then, one has 
		\begin{align*}
			W_{\alpha\beta} & = u_{\alpha\beta} = \sum_{B_{j'}: \beta \in V(B_{j'})\setminus V(\widetilde{B_{j'}}) } e_{ij'} = \sum_{B_{i'}: \alpha \in V(B_{i'})\setminus V(\widetilde{B_{i'}}) } e_{i'j}  
		\end{align*}
	\end{enumerate}
\end{lemma}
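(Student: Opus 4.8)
The plan is to treat the four parts in the order stated, since each feeds into the next, and to concentrate the real work into a single separation statement used in part~(ii).

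\smallskip

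For part~(i), every entry of $u$ is already a projection, so it suffices to check that the rows and columns of the submatrix indexed by $\calC(X)$ still sum to $1$. This is immediate from \cref{lem:2-WL-Blocks}~(ii): if $p\in\calC(X)$ and $q\notin\calC(X)$ then $u_{pq}=0$ (and symmetrically $u_{qp}=0$), so for a cut vertex $p$ we get $\sum_{q\in\calC(X)}u_{pq}=\sum_{q\in V(X)}u_{pq}=1$, and likewise for columns.

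\smallskip

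Part~(ii) is the heart. First I would dispose of the degenerate case: if $\lev(B_i)\neq\lev(B_j)$, then every $u_{pq}$ with $p\in V(\widetilde{B_i})$, $q\in V(\widetilde{B_j})$ vanishes by \cref{lem:levels_are_preserved}~(ii), so all sums are $0$ and $e_{ij}=0$. Assume then $\lev(B_i)=\lev(B_j)$. The key lemma to prove is the \emph{separation property}: for $p\in V(\widetilde{B_i})$, $q,q'\in V(\widetilde{B_j})$ and any $p''\notin V(\widetilde{B_i})$, one has $u_{pq}u_{p''q'}=0$ (and its transpose). I would prove this by a case analysis on where $p''$ sits in the $\widetilde{B}$-partition of $V(X)$: if $p''$ lies in a block of a different level the factor $u_{p''q'}$ already vanishes by \cref{lem:levels_are_preserved}; if $p''$ is the parent cut vertex of $B_i$ then its level forces $u_{p''q'}=0$ via \cref{lem:levels_are_preserved}~(iv) (or \cref{lem:2-WL-Blocks}~(ii) when $q'$ is not a cut vertex); and in the remaining case one checks, using that the $\widetilde{B}$'s partition $V(X)$ and that a block has a unique parent cut vertex, that $p$ and $p''$ must lie in distinct blocks while $q,q'$ lie in the common block $B_j$, so \cref{lem:2-WL-Blocks}~(i) applies. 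Once separation is in hand, the argument is the standard one behind \cref{lem:disjoint-union-magic-unitary}: writing $C_q=\sum_{p\in V(\widetilde{B_i})}u_{pq}$ and $R_p=\sum_{q\in V(\widetilde{B_j})}u_{pq}$, expanding $u_{pq}=u_{pq}\sum_{p''}u_{p''q'}$ and using separation gives $C_q=C_qC_{q'}$; symmetry and self-adjointness yield $C_q=C_{q'}$, so $C_q$ is independent of $q$, and symmetrically $R_p$ is independent of $p$. Finally $RC=C$ and $CR=R$ (each obtained by absorbing a row- or column-sum of orthogonal projections), and since $C,R$ are self-adjoint this forces $C=R$. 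Each $C_q$ is a projection as a sum of mutually orthogonal projections from column $q$.

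\smallskip

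For part~(iii), the entries of $W$ are projections by (i) and (ii). For the sums, a cut-node row sums to $\sum_{y\in\calC(X)}u_{xy}=1$ by (i), while a block-node row $B_i$ sums to $\sum_{j}e_{ij}=\sum_{q\in V(X)}u_{pq}=1$ for a fixed $p\in V(\widetilde{B_i})$, because the $V(\widetilde{B_j})$ partition $V(X)$; columns are symmetric, so $W$ is a magic unitary. For $A_{T(X)}W=WA_{T(X)}$ I would apply \cref{lem:commutation_adjacency} on the tree and check $W_{xy}W_{x'y'}=0$ whenever $\rel_{T(X)}(x,x')\neq\rel_{T(X)}(y,y')$. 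Since $W$ vanishes on mixed block/cut entries and $T(X)$ is bipartite between block and cut nodes, the cut--cut and block--block products vanish by orthogonality within a row or column (using that distinct $\widetilde{B}$'s are disjoint); the only substantive case is incidence preservation, namely $u_{xy}e_{x'y'}=0$ when exactly one of $x\in V(B_{x'})$, $y\in V(B_{y'})$ holds. I would settle this by expanding $e_{x'y'}=\sum_{q\in V(\widetilde{B_{y'}})}u_{pq}$ and comparing levels: the constraint $\lev(x)=\lev(y)$ from \cref{lem:levels_are_preserved}~(iv), together with \cref{lem:2-WL-Blocks}~(i), rules out every surviving term.

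\smallskip

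For part~(iv), the equality $W_{\alpha\beta}=u_{\alpha\beta}$ is the definition of $W$. Expanding the right-hand sums with $e_{ij'}=\sum_{q'\in V(\widetilde{B_{j'}})}u_{pq'}$ for a fixed $p\in V(\widetilde{B_i})$, and noting that the blocks $B_{j'}$ with $\beta\in V(B_{j'})\setminus V(\widetilde{B_{j'}})$ are exactly the child blocks of $\beta$, the claim reduces to the \emph{parent-correspondence identity} $u_{\alpha\beta}=\sum_{q'}u_{pq'}$, the sum running over all $q'$ whose $\widetilde{B}$-block has parent cut vertex $\beta$. This follows from two vanishing facts, $u_{\alpha\beta''}u_{pq'}=0$ for $\beta''\neq\beta$ and $u_{\alpha\beta}u_{pq''}=0$ for $q''$ not of that form, after which $u_{\alpha\beta}=u_{\alpha\beta}\sum_{q''}u_{pq''}$ collapses to the desired sum; the second expression comes from the transposed argument. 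These vanishing facts, which say that quantum automorphisms respect the parent relation in the rooted block tree, are where I expect the real difficulty to lie, since they must be extracted from \cref{lem:wl-quantum-automorphims} together with the block- and level-detecting power of Weisfeiler--Leman (\cref{lem:2-WL-Blocks}, \cref{lem:levels_are_preserved}). The recurring obstacle throughout is that the sets $V(\widetilde{B})$ are \emph{not} connected components, so the clean component-wise separation of \cref{lem:disjoint-union-magic-unitary} has to be re-derived each time from the level and block structure; once separation and parent-correspondence are in place, the rest is routine magic-unitary algebra.
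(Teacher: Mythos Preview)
Your proposal is correct and follows essentially the same route as the paper: part~(i) via \cref{lem:2-WL-Blocks}~(ii); part~(ii) by multiplying one sum by a full row/column sum and killing the extraneous terms using \cref{lem:levels_are_preserved} and \cref{lem:2-WL-Blocks}~(i); part~(iii) via \cref{lem:commutation_adjacency} with the only substantive case being block--cut incidence; and part~(iv) by the same ``multiply by $1$ and reduce'' trick. The only cosmetic difference is that you package the key vanishing in~(ii) as a standalone separation lemma, whereas the paper folds it directly into the computation; one small correction: in your case~2 of~(ii) the clean citation is \cref{lem:levels_are_preserved}~(iii), which gives $u_{p''q'}=0$ for the parent cut vertex immediately, rather than~(iv).
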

\begin{proof}
	\begin{enumerate}[label = \textit{\roman*.}]
		\item It follows from~\cref{lem:2-WL-Blocks} that if $p \in \calC(X)$, then $u_{pq'}  = 0$ as soon as $q' \notin \calC(X)$.
		This means that when extracting the submatrix of $u$ with rows and columns indexed by vertices in $\calC(X)$, all the coefficients that we remove in these rows and columns are $0$, so that we still have a magic unitary matrix.
		
		\item {Let $p \in V(\widetilde{B_{i}})$, $q \in V(\widetilde{B_{j}})$ be two vertices.
		It is clear from \cref{lem:levels_are_preserved}~\textit{ii.} that if $\lev(B_i) \neq \lev(B_j)$, then $e_{ij}$ is well-defined and vanishes.
		Hence, we assume $\lev(B_i) = \lev(B_j)\coloneq k$ and consider $\sum_{p' \in V(\widetilde{B_{i}})}u_{p'q}$.
		\begin{align*}
			\sum_{p' \in V(\widetilde{B_{i}})} u_{p'q} & = \left(\sum_{p' \in V(\widetilde{B_{i}})} u_{p'q}\right)\left(\sum_{q'\in V(X)} u_{pq'}\right)\\
			& = \left(\sum_{p' \in V(\widetilde{B_{i}})} u_{p'q}\right)\left(\sum_{B_{j'} \in \calB_k(X)}\left(\sum_{q' \in V(\widetilde{B_{j'}})} u_{pq'}\right)\right)\\
			& = \left(\sum_{p' \in V(\widetilde{B_{i}})} u_{p'q}\right)\left(\sum_{B_{j'} \in \calB^q_k(X)}\left(\sum_{q' \in V(\widetilde{B_{j'}})} u_{pq'}\right)\right)\\
			& = \left(\sum_{p' \in V(\widetilde{B_i})} u_{p'q}\right)\left(\sum_{q'\in V(\widetilde{B_{j}})} u_{pq'}\right),
		\end{align*}
		where the second equality follows from \cref{lem:levels_are_preserved}~\textit{ii.} and the fact that $q \in \widetilde{B_j}$ and $\lev(B_j) = k$.
		The third equality follows from~\cref{lem:2-WL-Blocks} and~\cref{lem:levels_are_preserved}~\textit{iii.}
		Similarly, the last equality follows since for each vertex $r \in V(X)$, there is at most one block $B_l \in \calB_k(X)$, such that $r \in V(B_l)$.
		
		Proceeding in a similar manner, we have
		\begin{align*}
		\sum_{q'\in V(\widetilde{B_{j}})} u_{pq'}& = \left(\sum_{p' \in V(X)} u_{p'q}\right)\left(\sum_{q'\in V(\widetilde{B_{j}})} u_{pq'}\right) \\
		& =  \left(\sum_{B_{i'} \in \calB_k(X)}\left(\sum_{p' \in V(\widetilde{B_{i'}})} u_{p'q}\right)\right)\left(\sum_{q'\in V(\widetilde{B_{j}})} u_{pq'}\right) \\
		& =  \left(\sum_{B_{i'} \in \calB^p_k(X)}\left(\sum_{p' \in V(\widetilde{B_{i'}})} u_{p'q}\right)\right)\left(\sum_{q'\in V(\widetilde{B_{j}})} u_{pq'}\right) \\
		& = \left(\sum_{p' \in V(\widetilde{B_i})} u_{p'q}\right)\left(\sum_{q'\in V(\widetilde{B_{j}})} u_{pq'}\right) \\
		\end{align*}
		Hence, $e_{ij}$ is indeed well-defined and independent of the choice of $p$ and $q$.
		
		\item It follows from \textit{i.} and \textit{ii.} that $W$ is a magic unitary.
		It is also clear from the definition of $W$ that if $\alpha \in \calC(X)$, and $i \in \calB(X)$, then $W_{\alpha i}= W_{i\alpha} = 0$.
		Therefore, by Lemma \ref{lem:commutation_adjacency}, we only have to check that for all $i,j \in \calB(X)$ and $\alpha,\beta \in \calC(X)$, such that $\alpha \in V(B_i)$ and $\beta \notin V(B_j)$, one has $W_{ij} W_{\alpha\beta} = 0$.
		We may assume that $\lev(\alpha) = \lev(\beta)$ and $\lev(B_i) = \lev(B_j)$.
		Choose some $p \in V(\widetilde{B_i})$ and write
		\begin{align*}
			W_{ij} W_{\alpha\beta} & = \left(\sum_{q' \in V(\widetilde{B_j})}u_{pq'}\right) u_{\alpha\beta} = \left(\sum_{q' \in V(\widetilde{B_j})}u_{pq'}u_{\alpha\beta}\right).
		\end{align*}
		Since $p$ and $\alpha$ are in the same block, we can conclude by~\cref{lem:2-WL-Blocks} if we show that $q'$ and $\beta$ cannot be in the same block.
		It is of course enough to consider the case where $q'$ is a cut-vertex, and because its parent block is $B_{j}$, $\beta$ must belong to one of its children blocks.
		But then, $\lev(\beta) = \lev(q') + 2 = \lev(B_{j}) + 3 \neq \lev(\alpha)$, a contradiction since we assumed $\lev(\beta) = \lev(\alpha)$.}
		
		\item Consider $u_{\alpha\beta}$, where $\alpha\in V(B_i)$ and $\beta \in V(B_j)$ are defined as in \textit{iv.}
		Note that $\lev(\alpha) = \lev(B_{i'})+1$ and $\lev(\beta) = \lev(B_{j'})+1$, for any blocks $B_{i'}, B_{j'}$ such that $\alpha \in V(B_{i'})\setminus V(\widetilde{B_{i'}})$ and $\beta \in V(B_{j'})\setminus V(\widetilde{B_{j'}})$.
		Hence, it follows from \textit{ii.} and~\cref{lem:levels_are_preserved} that if $\lev(\alpha)\neq \lev(\beta)$, then 
		\begin{align*}
			W_{\alpha\beta} & = u_{\alpha\beta} = \sum_{B_{j'}: \beta \in V(B_{j'})\setminus V(\widetilde{B_{j'}}) } e_{ij'} = \sum_{B_{i'}: \alpha \in V(B_{i'})\setminus V(\widetilde{B_{i'}}) } e_{i'j} = 0.
		\end{align*}
		
		Hence, we may assume that $\lev(\alpha) = \lev(\beta) \coloneq k$ and $\lev(B_i) = \lev(B_j) = k-1$.
		Choose some $p'\in V(\widetilde{B_i})$.
		Then, one has 
		\begin{align*}
			u_{\alpha\beta} & = u_{\alpha\beta}\left(\sum_{q'\in V(X)} u_{p'q'}\right)\\
			& = u_{\alpha\beta} \left(\sum_{B_{j'} \in\calB_{k-1}(X)} \left(\sum_{q'\in V(\widetilde{B_{j'}})} u_{p'q'}\right)\right)\\
			& = u_{\alpha\beta} \left(\sum_{B_{j'} \in \calB_{k-1}^{\beta}(X)} \left(\sum_{V(\widetilde{B_{j'}})} u_{p'q'}\right)\right),
		\end{align*}
		where the second equality follows from \cref{lem:levels_are_preserved} and the third one follows from \cref{lem:2-WL-Blocks}.
		In the above expression, note that 
		\[ \left(\sum_{B_{j'} \in \calB_{k-1}^{\beta}(X)} \left(\sum_{V(\widetilde{B_{j'}})} u_{p'q'}\right)\right) = \sum_{B_{j'}: \beta \in V(B_{j'})\setminus V(\widetilde{B_{j'}}) } e_{ij'}. \]
		Next, we have 
		\begin{align*}
			\left(\sum_{B_{j'} \in \calB_{k-1}^{\beta}(X)} \left(\sum_{V(\widetilde{B_{j'}})} u_{p'q'}\right)\right) & = \left(\sum_{\alpha'\in V(X)}u_{\alpha'\beta}\right)\left(\sum_{B_{j'} \in \calB^\beta_{k-1}(X)} \left(\sum_{V(\widetilde{B_{j'}})} u_{p'q'}\right)\right)\\
			& = \left(\sum_{\alpha'\in \calC_k(X)}u_{\alpha'\beta}\right)\left(\sum_{B_{j'} \in \calB_{k-1}^{\beta}(X)} \left(\sum_{V(\widetilde{B_{j'}})} u_{p'q'}\right)\right),\\
		\end{align*}
		by \cref{lem:2-WL-Blocks} and \cref{lem:levels_are_preserved}.
		To conclude, note that it follows from \cref{lem:2-WL-Blocks} that in the above sum, $u_{\alpha'\beta}u_{p'q'} = 0$, unless $\alpha' \in V(B_i)$.
		However, $\alpha$ is the unique vertex with $\lev(\alpha)=k$ such that $\alpha\in V(B_i)$.
		Hence, one has 
		\[ \left(\sum_{B_{j'} \in \calB_{k-1}^{\beta}(X)} \left(\sum_{V(\widetilde{B_{j'}})} u_{p'q'}\right)\right) = u_{\alpha\beta} \left(\sum_{B_{j'} \in \calB_{k-1}^{\beta}(X)} \left(\sum_{V(\widetilde{B_{j'}})} u_{p'q'}\right)\right). \]
		This proves the first equality, and the second one follows from similar arguments.
	\end{enumerate}
\end{proof}

\begin{remark}
The statement of~\Cref{lem:levels_are_preserved} implies that the fundamental representation $u$ of $X$ is an isomorphism of partitioned graphs in the sense of~\cite{freslon2025block}, where the partition is given by the levels. This enables to recover the first two points of~\Cref{lem:gen-res-1} from~\cite[Theorem 3.3]{freslon2025block}.
\end{remark}

We shall also need the following lemma for the proof the next theorem.
Recall that we defined $X^{\leq \alpha}$ (in \cref{subsec:prelim:graphs}) to be the subgraph of $X$ induced by all blocks and cut vertices below $\alpha$ in the block tree, including $\alpha$ itself.

\begin{lemma}
	\label{lem:colouring-block-by-iso-type}
	Let $B$ be a block of a connected graph $X$, and $u$ be the fundamental representation of $X$.
	If $\alpha, \beta \in \calC(X)$ are two cut vertices in $B$, then
	\begin{enumerate}[label = \roman*.]
		\item For any $p \in V(X^{\leq \alpha})$ and $q \in V(X^{\leq \beta})$, the following holds true: 
		\[ u_{\alpha\beta} = \sum_{q' \in V(X^{\leq \beta})} u_{pq'} = \sum_{p'\in V(X^{\leq \alpha})} u_{p'q}. \]
		
		\item In particular, $u_{\alpha\beta} = 0$ if and only if $u_{pq} = 0$ for all $p \in V(X^{\leq \alpha})$ and $q \in V(X^{\leq \beta})$.
		
		\item Let $u^{\alpha\beta}$ denote the submatrix of $u$ with rows indexed by $V(X^{\leq \alpha})$ and columns indexed by $V(X^{\leq \beta})$.
		Then, the rows and columns of $u^{\alpha\beta}$ sum to the same projection and $A_{X^{\leq\alpha}}u^{\alpha\beta} = u^{\alpha\beta}A_{X^{\leq\beta}}$.
		Hence, $X^{\leq \alpha} \not\cong_{q} X^{\leq \beta}$ implies $u_{\alpha\beta} = 0$.
	\end{enumerate}
\end{lemma}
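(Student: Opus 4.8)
Set $S_\alpha := V(X^{\leq\alpha})$ and $S_\beta := V(X^{\leq\beta})$. We may assume $\lev(\alpha)=\lev(\beta)=:k$: this is the case in which the lemma is applied, with $\alpha,\beta$ both child cut vertices of $B$, so that $S_\alpha$ and $S_\beta$ are disjoint rooted subtrees and $\beta$ is the unique cut vertex of level $k$ contained in $S_\beta$ (all other nodes of $S_\beta$ lie strictly below $\beta$). The plan is to prove the single coherence identity $\sum_{q'\in S_\beta}u_{pq'}=u_{\alpha\beta}=\sum_{p'\in S_\alpha}u_{p'q}$ for all $p\in S_\alpha$, $q\in S_\beta$; parts \textit{i.}--\textit{iii.} then follow quickly. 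The engine is the \emph{separation property}: $u_{p'q}u_{pq'}=0$ whenever $p\in S_\alpha$, $q,q'\in S_\beta$ and $p'\notin S_\alpha$. Granting this, for fixed $q$ I insert $1=\sum_{p'\in V(X)}u_{p'q}$ on the left of $\sum_{q'\in S_\beta}u_{pq'}$ and use separation to discard every $p'\notin S_\alpha$, obtaining $\sum_{q'\in S_\beta}u_{pq'}=\big(\sum_{p'\in S_\alpha}u_{p'q}\big)\big(\sum_{q'\in S_\beta}u_{pq'}\big)$; the symmetric computation gives the same product for $\sum_{p'\in S_\alpha}u_{p'q}$, so the two one-sided sums coincide in a common projection $e_{\alpha\beta}$ that does not depend on $p$ or $q$.

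The crux is the separation property, which says that quantum automorphisms respect the ancestor structure of the block tree. I would deduce it from \cref{lem:wl-quantum-automorphims}: building on the analysis of \cite{kiefer2019weisfeiler}, the Weisfeiler--Leman colour $\wl_X(x,y)$ of a pair determines the level of the block-tree node that is the least common ancestor of $x$ and $y$ (the highest node on the tree-path joining them). For $q,q'\in S_\beta$ this level is at most $k$, whereas for $p\in S_\alpha$ and $p'\notin S_\alpha$ the tree-path from $p$ to $p'$ must climb above $\alpha$, so its least-common-ancestor level is at least $k+1$; hence $\wl_X(p',p)\neq\wl_X(q,q')$ and $u_{p'q}u_{pq'}=0$. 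Establishing this WL input---that the least-common-ancestor level is a colour invariant of a pair, with the bookkeeping for cut vertices (which occupy several tree nodes)---is where the real work lies. A more self-contained alternative is to note that the set of vertices lying in blocks of level at most $k+1$ is a union of quantum orbits (by \cref{lem:levels_are_preserved}) and decomposes as the disjoint union $\bigsqcup_{\lev(B')=k+1}X^{\leq B'}$; applying \cref{lem:disjoint-union-magic-unitary} to the restricted magic unitary yields coherence at the granularity of these blocks, after which one descends into the component containing $B$. The obstacle there is exactly the edges among the level-$k$ cut vertices inside $B$, which prevent the subtrees from being literal connected components and must be removed by compressing to the corner that fixes $B$.

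To identify the projection, specialize $p=\alpha$: then $e_{\alpha\beta}=\sum_{q'\in S_\beta}u_{\alpha q'}$, and since $\alpha$ is a cut vertex its image is carried by cut vertices (\cref{lem:2-WL-Blocks}\,\textit{ii.}) of level $k$ (\cref{lem:levels_are_preserved}\,\textit{iv.}), the only such vertex of $S_\beta$ being $\beta$; hence $e_{\alpha\beta}=u_{\alpha\beta}$, which is \textit{i.} Part \textit{ii.} is then immediate: $u_{\alpha\beta}=\sum_{q'\in S_\beta}u_{pq'}$ is a sum of projections, so it is $0$ exactly when each $u_{pq'}=0$, and the choice $p=\alpha$, $q'=\beta$ gives the converse.

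For \textit{iii.}, the equal row and column sums are precisely \textit{i.} The intertwining follows from the fact that $X^{\leq\alpha}$ and $X^{\leq\beta}$ are \emph{induced} subgraphs: the relations $\rel_{X^{\leq\alpha}}$ and $\rel_{X^{\leq\beta}}$ agree with $\rel_X$ on $S_\alpha$ and $S_\beta$ respectively, so the global commutation $A_Xu=uA_X$---which by \cref{lem:commutation_adjacency} forces $u_{pq}u_{p'q'}=0$ whenever $\rel_X(p,p')\neq\rel_X(q,q')$---restricts to the statement that $u^{\alpha\beta}$ intertwines $A_{X^{\leq\alpha}}$ and $A_{X^{\leq\beta}}$. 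Finally, if $u_{\alpha\beta}\neq0$, compressing $u^{\alpha\beta}$ by the nonzero projection $e_{\alpha\beta}=u_{\alpha\beta}$ produces a magic unitary with entries in $e_{\alpha\beta}\,C(\Qut(X))\,e_{\alpha\beta}$ that intertwines $A_{X^{\leq\alpha}}$ and $A_{X^{\leq\beta}}$, witnessing $X^{\leq\alpha}\cong_q X^{\leq\beta}$; the contrapositive is the asserted implication, exactly as in \cref{lem:disjoint-union-magic-unitary}\,\textit{iii.}
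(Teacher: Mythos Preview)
Your handling of parts \textit{ii.} and \textit{iii.} is correct and matches the paper's. The gap is in part \textit{i.}: you reduce everything to the ``separation property'' $u_{p'q}u_{pq'}=0$ for $p\in S_\alpha$, $q,q'\in S_\beta$, $p'\notin S_\alpha$, but you do not prove it. The WL route you sketch---that $\wl_X$ determines the level of the least common ancestor in the block tree---is plausible but is not supplied by any result stated in the paper or directly by the cited results of Kiefer et al.\ (which give that 2-WL detects cut vertices, blocks, and 2-separators, not the full ancestor structure); you yourself flag this as ``where the real work lies.'' Your alternative via \cref{lem:disjoint-union-magic-unitary} hits the obstacle you name, and ``compressing to the corner that fixes $B$'' would require knowing in advance that $V(B)$ is a union of quantum orbits of the relevant restriction, which is close to what you are trying to prove.

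The paper avoids the separation property entirely by inducting on the level of the block $B_i$ with $p\in V(\widetilde{B_i})$. The base case is exactly your specialization $p=\alpha$, giving $\sum_{q'\in S_\beta}u_{\alpha q'}=u_{\alpha\beta}$ directly from \cref{lem:2-WL-Blocks} and \cref{lem:levels_are_preserved}. For the inductive step the engine is \cref{lem:gen-res-1}\,\textit{iv.}: if $p_i$ is the parent cut vertex of $B_i$, then for any cut vertex $q_k$ of the same level one has
\[
u_{p_i q_k}=\sum_{B_{j'}:\,q_k\text{ parent of }B_{j'}}\ \sum_{q'\in V(\widetilde{B_{j'}})}u_{pq'}.
\]
Summing this over all such $q_k$ lying in $S_\beta$ collapses $\sum_{q'\in S_\beta}u_{pq'}$ to $\sum_{q_k\in S_\beta}u_{p_i q_k}$, and since $p_i$ sits at a strictly higher level than $p$ the induction hypothesis applies. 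Thus the paper climbs the block tree level by level using only the already-established \cref{lem:gen-res-1}, rather than a global WL statement about ancestors.
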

\begin{proof}
	Let $\lev(\alpha) = \lev(\beta) \coloneq k$.
	Choose some $p \in V(X^{\leq \alpha})$ such that $p \in \widetilde{B_i}$, for some $B_i \in \calB(X)$.
	We proceed by induction on $\lev(B_i)$.
	
	First consider the case where $\lev(B_i) = k+1$.
	In this case, we have $p = \alpha$.
	Moreover, 
	\begin{align*}
		\sum_{q'\in V(X^{\leq \beta})}u_{\alpha q'} = \left(\sum_{\substack{B_{j'} \in\calB_{k+1}(X) \\ \& \ B_{j'}\in T(X^{\leq \beta})}} \left(\sum_{q'\in V(\widetilde{B_{j'}})} u_{\alpha q'}\right)\right) = u_{\alpha\beta},
	\end{align*}
	since $\beta$ is clearly the only vertex $q'$ in $V(X^{\leq \beta})$ such that $q'\in V(\widetilde{B_{j'}})$ for some $B_{j'} \in\calB_{k+1}(X)$.
	
	Assume now that we have shown that the result holds for all $p \in V(X^{\leq \alpha})$ such that $p \in V(\widetilde{B_{i'}})$ and $B_{i'} \in \calB_{k- 2l+1}(X)$.
	Choose some $p \in V(X^{\leq \alpha})$ such that $p \in V(\widetilde{B_i})$ for some $ B_i \in \calB_{k- 2l+3}(X)$.
	Now, note that 
	\[ \sum_{q'\in V(X^{\leq \beta})}u_{pq'} = \left(\sum_{\substack{B_{j'} \in\calB_{k-2l+3}(X) \\ \& \ B_{j'}\in T(X^{\leq \beta})}} \left(\sum_{q'\in V(\widetilde{B_{j'}})} u_{p q'}\right)\right). \]
	
	Let $p_i \in V(B_i) \cap \calC(X)$, be the unique parent of $B_i$ in $T(X)$ and choose a fixed $q_k \in \calC_{k-2l+2}(X)$.
	Then, it follows from \cref{lem:gen-res-1}~\textit{iv.} that 
	\[ u_{p_{i}q_{k}} = \left(\sum_{B_{j'} \in \calB^{q_k}_{k-2l+3}(X)}\left(\sum_{q'\in V(\widetilde{B_{j'}})} u_{p q'}\right)\right). \]
	Since each block $B_{j'}\in \calB_{k-2l+1}(X^{\leq \beta})$ is the child of a unique $q_{k'} \in \calC_{k-2l+2}(X^{\leq\beta})$ in $T(X)$, we may write
	\begin{align*}
		\sum_{q'\in V(X^{\leq \beta})}u_{pq'} & = \left(\sum_{\substack{B_{j'} \in\calB_{k-2l+3}(X) \\ \& \ B_{j'}\in T(X^{\leq \beta})}} \left(\sum_{q'\in V(\widetilde{B_{j'}})} u_{p q'}\right)\right)\\
		& = \left(\sum_{q_{k'} \in \calC_{k-2l+2}(X)}\left(\sum_{\substack{B_{j'} \in \calB^{q_k'}_{k-2l+3}(X)\\ \& B_{j'}\in T(X^{\leq \beta})}}\left(\sum_{q'\in V(\widetilde{B_{j'}})} u_{p q'}\right)\right)\right)\\
		& = \sum_{q_{k'} \in \calC_{k-2l+2}(X^{\leq \beta})} u_{pq_{k'}} = u_{\alpha\beta}, 
	\end{align*}
	where the final equality follows from the induction hypothesis.
	
	It is now clear from \textit{i.} that $u_{\alpha\beta} = 0$ if and only if $u_{pq} = 0$ for all $p \in V(X^{\leq \alpha})$ and $q \in V(X^{\leq \beta})$.
	Lastly, note that if $u_{\alpha\beta}\neq 0$, then the submatrix of $u$ with rows indexed by $V(X^{\leq \alpha})$ and columns indexed by $V(X^{\leq \beta})$, is a quantum permutation matrix when restricted to the range of $u_{\alpha\beta}$.
	Let us denote this matrix by $u^{\alpha\beta}$.
	It is also not too difficult to see that $A_{X^{\leq \alpha}} u^{\alpha\beta } = u^{\alpha\beta} A_{X^{\leq \beta}}$.
	Hence, if $u_{\alpha\beta} = 0$, we have $X^{\leq \alpha} \cong_q X^{\leq \beta}$, which establishes the contrapositive of the second part of \textit{iii.}
\end{proof}

We now prove the main theorem of this section. 

\begin{theorem}
	\label{thm:block_case}
	Let $B$ be a block of a connected graph $X$.
	Suppose that two children cut vertices $\alpha, \beta$ of $B$ have the same colour if and only if $X^{\leq \alpha}\cong X^{\leq \beta}$.
	Further, assume that $X^{\leq \alpha}_\alpha \cong X^{\leq \beta}_\beta$ if and only if $X^{\leq \alpha}_\alpha \cong_q X^{\leq \beta}_\beta$, for all children cut vertices $\alpha,\beta$ of $B$.
	\begin{enumerate}[label = \roman*.,itemsep=0cm]
		\item
		If $B$ is the central block and $\alpha_1,\dots, \alpha_n$ are representatives of orbits of $\Qut_c(B)$ acting on $V(B)\cap\calC(X)$, then
		\[ \Qut(X) \cong (\Qut(X^{\leq \alpha_1})_{\alpha_1},\dots,\Qut(X^{\leq \alpha_n})_{\alpha_n},1,\dots,1)\fgwr\Qut_c(B), \]
		where the orbits of $\Qut_c(B)$ are $\Omega_1,\dots,\Omega_{n}, \Omega_{n+1},\dots,\Omega_m$, where $\bigcup_{i=1}^n \Omega_i = \calC(X)\cap V(B)$.
		
		\item
		If $B$ is not a central block with a parent cut vertex $\alpha$ and $\alpha_1,\dots, \alpha_n$ are representatives of orbits of $\Qut_c(B)_\alpha$ acting on $V(B)\cap\calC(X)$, then
		\[ \Qut(X^{\leq B})_\alpha \cong (\Qut(X^{\leq \alpha_1})_{\alpha_1},\dots,\Qut(X^{\leq \alpha_n})_{\alpha_n},1, \dots, 1)\fgwr\Qut_c(B)_\alpha, \]
		where the orbits of $\Qut_c(B)_v$ are $\Omega_1,\dots,\Omega_{n}, \Omega_{n+1},\dots,\Omega_m$, where $\bigcup_{i=1}^n \Omega_i = \calC(X)\cap V(B)$.
	\end{enumerate}
\end{theorem}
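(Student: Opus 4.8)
The plan is to establish the isomorphism in (i) by producing two mutually inverse surjective $*$-homomorphisms between $C(\Qut(X))$ and $C(\bbF)$, where $\bbF$ denotes the free inhomogeneous wreath product on the right-hand side, exactly as in the proof of \cref{thm:disjoint-union}. Write $W = [W_{\alpha p,\beta q}]$ for the fundamental representation of $\bbF$, so that $W_{\alpha p,\beta q} = h_{\alpha\beta}g^{(i,\alpha)}_{pq}$ when $\alpha,\beta\in\Omega_i$ and $0$ otherwise, where $h$ is the magic unitary of $\Qut_c(B)$ and $g^{(i,\alpha)}$ is that of the copy of $\Qut(X^{\leq\alpha_i})_{\alpha_i}$ indexed by $\alpha$. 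First I would fix, for each cut-vertex orbit $\Omega_i$ and each $\alpha\in\Omega_i$, a rooted isomorphism $X^{\leq\alpha}_\alpha\cong X^{\leq\alpha_i}_{\alpha_i}$; this exists because $\alpha$ and $\alpha_i$ share an orbit of $\Qut_c(B)$, hence a colour, so that by the hypothesis on colours their rooted hanging subgraphs are isomorphic. These isomorphisms identify the index set $\Lambda = \bigsqcup_i \Omega_i\times\Lambda_i$ of $\bbF$ with $V(X)$, each root of a copy going to the corresponding vertex of $B$, and under this identification $A_X$ is the adjacency $A_B$ among the roots together with the adjacencies $A_{X^{\leq\alpha_i}}$ inside the individual copies, since distinct hanging subgraphs meet only inside $B$.

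For the forward direction I would verify $A_X W = W A_X$ by means of \cref{lem:commutation_adjacency}: one must show $W_{\alpha p,\beta q}W_{\alpha' p',\beta' q'} = 0$ whenever $\rel$ of the two row indices differs from $\rel$ of the two column indices, and by magic unitarity it suffices to treat the case where the rows are $X$-adjacent while the columns are not. Since $\Qut(X^{\leq\alpha_i})_{\alpha_i}$ fixes its root, one has $g^{(i,\alpha)}_{\mathrm{root},q} = \delta_{q,\mathrm{root}}$, so that the roots are permuted exactly by the $h_{\alpha\beta}$. The verification then splits into two cases: if the two adjacent rows lie in the same copy $\alpha = \alpha'$, the product vanishes by \cref{lem:commutation_adjacency} applied to the graph $X^{\leq\alpha_i}$; if they lie in different copies they must both be roots that are adjacent in $B$, and the product reduces to $h_{\alpha\beta}h_{\alpha'\beta'}$, which vanishes by the relation $A_B h = h A_B$ of $\Qut_c(B)$. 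The universal property of $\Qut(X)$ then yields a surjective $*$-homomorphism $C(\Qut(X))\to C(\bbF)$, $u_{\alpha p,\beta q}\mapsto W_{\alpha p,\beta q}$.

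For the reverse direction I would assemble a $*$-homomorphism $C(\bbF)\to C(\Qut(X))$ from the pieces provided by \cref{lem:gen-res-1} and \cref{lem:colouring-block-by-iso-type}. Writing $u$ for the fundamental representation of $\Qut(X)$, centrality of $B$ together with \cref{lem:levels_are_preserved} forces the submatrix $v = [u_{\alpha\beta}]_{\alpha,\beta\in V(B)}$ to be a magic unitary intertwining $A_B$, while the third part of \cref{lem:colouring-block-by-iso-type} together with the hypothesis that rooted isomorphism coincides with rooted quantum isomorphism shows that $v$ respects the colouring; hence $v$ induces $C(\Qut_c(B))\to C(\Qut(X))$, $h_{\alpha\beta}\mapsto u_{\alpha\beta}$. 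For each $\alpha\in\Omega_i$ the compressed submatrix $u^{\alpha\alpha}$ of \cref{lem:colouring-block-by-iso-type} is a magic unitary on $V(X^{\leq\alpha})$ intertwining $A_{X^{\leq\alpha}}$ and fixing $\alpha$, and transporting it along the chosen rooted isomorphism gives $C(\Qut(X^{\leq\alpha_i})_{\alpha_i})\to C(\Qut(X))$, $g^{(i,\alpha)}_{pq}\mapsto u_{pq}$. I would then check, using \cref{lem:gen-res-1} and the block structure, that the images satisfy the commutation relations $[g^{(i,\alpha)}_{pq},h_{\alpha\beta}] = 0$, so that the universal properties of the free product and of $\bbF$ combine these into a surjective $*$-homomorphism that is inverse to the forward map on generators.

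Case (ii) would follow from the same argument applied to $X^{\leq B}$ after colouring the parent cut vertex $\alpha$ with a fresh colour: fixing $\alpha$ forces the unique block of $X^{\leq B}$ containing $\alpha$, namely $B$, to be preserved, which plays the role of centrality in case (i), and the factor attached to the singleton orbit of $\alpha$ is trivial. The main obstacle, and the step where both hypotheses are indispensable, is the consistent rooted-isomorphic identification of the hanging subgraphs within each orbit. The third part of \cref{lem:colouring-block-by-iso-type} only delivers a rooted \emph{quantum} isomorphism from the non-vanishing of $u_{\alpha\beta}$, and it is exactly the assumption that rooted quantum isomorphism coincides with rooted isomorphism that upgrades this to a genuine isomorphism — needed both to set up the identification $\Lambda\cong V(X)$ in the forward map and to guarantee that $v$ respects the colouring in the reverse map.
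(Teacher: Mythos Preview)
Your proposal is correct and follows essentially the same route as the paper's proof: both construct mutually inverse surjective $*$-homomorphisms, verify $A_XW = WA_X$ via \cref{lem:commutation_adjacency} by a case analysis on whether adjacent indices lie in the same hanging subgraph or are both in $V(B)$, and build the reverse map from \cref{lem:levels_are_preserved} and \cref{lem:colouring-block-by-iso-type} combined with the hypothesis equating rooted isomorphism with rooted quantum isomorphism. Your two-case split in the forward direction is a clean compression of the paper's four cases, made possible by your observation that root-fixing forces $W_{pq}=0$ whenever exactly one of $p,q$ lies in $V(B)$; the paper separates these out explicitly as its Cases~II and~III.
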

\begin{proof}
	We shall only prove \textit{i.}, as the proof of \textit{ii.} can be done similarly.
	Let us denote the quantum group $(\Qut(X^{\leq \alpha_1})_{\alpha_1},\dots,\Qut(X^{\leq \alpha_n})_{\alpha_n},1,\dots,1)\fgwr\Qut_c(B)$ by $F$, and let us denote the fundamental representations of $\Qut(X)$ and $F$ by $u$ and $w$ respectively.
	Let us also denote the fundamental representations of $\Qut_c(B)$ by $h$ and  the fundamental representation of the $k$-th copy of $\Qut(X^{\leq\alpha_i})_{\alpha_i}$ by $g^{(i,k)}$ for $i= 1,\dots, n$ and $k \in \Omega_i$.
	
	Before we proceed, we show that $F$ is indeed a quantum permutation group acting on $V(X)$.
	In order to see this, we first note that the set that $F$ acts is given by
	\[ \left(\bigcup_{i \in [n]} \Omega_i \times V(X^{\leq \alpha_i})\right) \cup \left(\bigcup_{i = n+1}^m \Omega_i\right). \]
	Next, for a fixed $i$ and any $\alpha_i^k, \alpha_i^l \in \Omega_i$, one has $X^{\leq \alpha_i^k} \cong X^{\leq \alpha_i^l} \cong X^{\leq \alpha_i}$.
	Let $\phi_i^k: V(X^{\leq\alpha_i^k}) \to V(X^{\leq \alpha_i})$, be any isomorphism between $X^{\leq \alpha_i^k}$ and $X^{\leq \alpha_i}$ mapping $\alpha_i^k$ to $\alpha_i$.
	We have a bijection
	\[ \phi : V(X) \to \left(\bigcup_{i \in [n]} \Omega_i \times V(X^{\leq \alpha_i})\right) \cup \left(\bigcup_{i = n+1}^m \Omega_i\right)	\]
	defined as follows: 
	\begin{align*}
		\phi(v) & = \begin{cases}
			v & \text{ if } v \in V(B) \setminus \calC(X) \\
			(\alpha_i^k, \phi_i^k(v)) & \text{ if } v \in V(X^{\leq \alpha_i^k}) \text{ for some } i \in [n] \text{ and } k \in \Omega_i.
		\end{cases}
	\end{align*}
	Hence, we will assume from now on that $w$ is indexed by $V(X)$ without any ambiguity.
	
	We begin by checking that $w_{pq}w_{p'q'} = 0$ if $p \sim p'$ and $q \nsim q'$.
	By Lemma \ref{lem:commutation_adjacency}, this will establish the existence of a $\ast$-homomorphism $\psi$ from $C(\Qut(X))$ to $C(F)$ mapping $u_{pq}$ to $w_{pq}$, for all $p,q \in V(X)$.
	We consider the following cases:
	\begin{enumerate}[label=Case \Roman* -]
		\item $p,q, p',q' \in V(B)$: We claim that in that case, $w_{pq} = h_{pq}$.
		Indeed, if $p\in V(B)\setminus\calC(X)$ we may assume $q\in V(B)\setminus\calC(X)$, since otherwise we have $w_{pq} = h_{pq} = 0$.
		Then, $w_{pq} = h_{pq}$ by definition.
		If on the contrary $p, q\in \calC(X)$, then either they are in different orbits, in which case $w_{pq} = 0 = h_{pq}$, or there exists $i\in [n]$ such that $p, q\in \Omega_{i}$.
		This means that $p = \alpha_{i}^{k}$ and $q = \alpha_{i}^{l}$ and therefore $w_{pq} = h_{\alpha_{i}^{k}\alpha_{i}^{l}}g^{i, k}_{\alpha_{i}\alpha_{i}} = h_{pq}$ since $g$ fixes $\alpha_{i}$.
		The result now follows since $w_{pq}w_{p'q'} = h_{pq}h_{p'q'}$ vanishes if $p\sim p'$ and $q\nsim q'$ or if $p\nsim q$ and $p'\sim q'$.
		
		\item $p,p' \in V(B)$ and $q\in V(X^{\leq \alpha_i^k})$ and $q'\in V(X^{\leq \alpha_j^l})$, for some $i, j \in [n]$, $k \in \Omega_i$, and $l\in \Omega_j$.
		Thanks to the previous case, we may assume $q,q'\notin V(B)$.
		We claim that in that case $w_{pq} = 0$ and there is nothing to prove.
		Indeed, by case I,
		\begin{equation*}
			\sum_{r \in V(B)} w_{pr} = \sum_{r \in V(B)}h_{pr} = 1
		\end{equation*}
		hence $w_{pq} = 0$ for all $q \notin V(B)$.
		
		\item $p,p' \in V(X^{\leq \alpha_i^k})$, for some $i \in [n]$ and $k \in \Omega_i$, and $q,q'\in V(B)$: This is the same as the previous case and it can be dealt with similarly.
		
		\item $p,p' \in V(X^{\leq \alpha_i^k})$, $q\in V(X^{\leq \alpha_j^{l}})$ and $q'\in V(X^{\leq \alpha_{j'}^{l'}})$, for some $i, j, j' \in [n]$, $k \in \Omega_i$, $l \in \Omega_j$ and $l'\in \Omega_{j'}$: We may assume that $i = j = j'$ since otherwise at least one of $w_{pq}$ or $w_{p'q'}$ vanishes.
		Hence, we have $w_{pq} = h_{\alpha^i_k\alpha^i_l}g^{(i,k)}_{p q}$ and $w_{p'q'} = h_{\alpha^i_k\alpha^i_{l'}}g^{(i,k)}_{p'q'}$.
		Thus,
		\begin{align*}
			w_{pq}w_{p'q'} & = h_{\alpha^i_k\alpha^i_l}g^{(i,k)}_{p, q}h_{\alpha^i_k\alpha^i_{l'}}g^{(i,k)}_{p'q'} \\
			& = h_{\alpha^i_k\alpha^i_l}g^{(i,k)}_{p, q}g^{(i,k)}_{p'q'}h_{\alpha^i_k\alpha^i_{l'}} = 0,
		\end{align*}
		since $p\sim p'$ and $q\nsim q'$.
	\end{enumerate}
	
	We shall now construct a surjective $\ast$-homomorphism in the opposite direction, i.e., from $C(F)$ to $C(\Qut(X))$, mapping $u_{pq}$ to $w_{pq}$.
	It follows from \cref{lem:levels_are_preserved}~\textit{v.} that the central block $V(B)$ is a union of orbits, so that the submatrix of $u$ indexed by $V(B)$ is a magic unitary satisfying $u_{\alpha\beta}u_{\alpha'\beta'} = 0$ whenever $\rel(\alpha,\alpha') \neq \rel(\beta, \beta')$.
	Moreover, it follows from \cref{lem:colouring-block-by-iso-type} that $u_{\alpha\beta} = 0$ if $X^{\leq \alpha}\not \cong_{qc} X^{\leq\beta}$.
	By our assumption that $X^{\leq \alpha} \cong_{qc} X^{\leq \beta}$ if and only if $X^{\leq \alpha} \cong X^{\leq \beta}$, we can conclude that $u_{\alpha\beta} = 0$ if $X^{\leq \alpha}\not \cong X^{\leq\beta}$.
	Therefore, the submatrix of $u$ also respects the colouring $c$.
	Hence, we have a $\ast$-homomorphism  $\varphi'$ from $\Qut_c(B)$ to $\Qut(X)$ mapping $h_{\alpha\beta}$ to $u_{\alpha\beta}$.
	
	Similarly, it follows from \cref{lem:colouring-block-by-iso-type} that for each $i \in [n]$ and $k \in \Omega_i$, the submatrix of $u$ indexed by $V(X^{\leq \alpha_i^k})$ commutes with $A_{X^{\leq \alpha_i^k}}$ and has rows and columns summing to the same projection $u_{\alpha_i^k, \alpha_i^k}$.
	Hence, for each $i \in [n]$ and $k \in \Omega_i$, we have a $\ast$-homomorphism $\varphi^{(i,k)}: C(X^{\leq\alpha_i^k}) \to C(\Qut(X))$ mapping $g^{(i,k)}_{pq}$ to $u_{pq}$.
	By the universal property of the free product, we may conclude that there is a $\ast$-homomorphism
	\[ \varphi^{\prime\prime}: \left(\bigast_{i=1}^n \left( \bigast_{k \in \Omega_i} C(\Qut(X^{\leq \alpha_i^k}))_v \right)\right) \ast \Qut_c(B)\to C(\Qut(X)) \]
	extending $\varphi'$ and $\varphi^{(i,k)}$ for all $i \in [n]$ and $k \in \Omega_i$.
	
	Lastly, we check that $\varphi^{\prime\prime}(h_{\alpha_i^k\beta}) = u_{\alpha_i^k\beta}$ commutes with $\varphi^{\prime\prime}(g^{(i,k)}_{pq}) = u_{pq}$ if $\alpha_i^k \in \Omega_i$.
	We may assume that $u_{\alpha_i^k\beta} \neq 0$ and $u_{pq} \neq 0$, so that $\beta = \alpha_{i}^{l}\in \Omega_i$.
	It follows from \cref{lem:colouring-block-by-iso-type} that for $q\in X^{\leq \alpha_{i}^{l}}$,
	\[ u_{\alpha_i^k\alpha_i^l} = \sum_{p'\in V(X^{\leq \alpha_i^k })}u_{p'q}. \]
	It is now easy to see that $u_{\alpha\beta}$ and $u_{pq}$ commute if $q \in V(X^{\leq \beta})$.
	Hence, $\varphi^{\prime\prime}$ induces a $\ast$-homomorphism $\varphi: C(F) \to C(\Qut(X))$ mapping the corresponding entries of $w$ to those of $u$, which shows that $\varphi$ is surjective.
	One easily checks on the generators that $\varphi$ and $\psi$ are mutual inverses, concluding the proof.
\end{proof}

We shall also need the following lemma to prove the next result: 
\begin{lemma}
	\label{lem:neighbours-of-fixed-vertex}
	Let $v$ be a vertex in a coloured graph $X$ such that $v$ is fixed by $\Qut(X)$.
	Then, the neighbours of $v$ form a union of orbits of $\Qut(X)$.
\end{lemma}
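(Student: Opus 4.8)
The plan is to translate the hypothesis into the single algebraic fact that $u_{vv} = 1$ (where $u$ is the fundamental representation of $\Qut(X)$), and then to show directly that $u_{pq} = 0$ whenever $p$ is a neighbour of $v$ and $q$ is not. Writing $N(v)$ for the set of neighbours of $v$ and recalling that the orbits of $\Qut(X)$ are the classes of the relation $p \sim_1 q \iff u_{pq} \neq 0$, this vanishing is exactly the assertion that $N(v)$ is a union of orbits: no nonzero entry of $u$ can link a neighbour of $v$ to a non-neighbour.

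First I would record that ``$v$ is fixed by $\Qut(X)$'' means precisely $u_{vv} = 1$. Since $u$ is a magic unitary, the row and column through $v$ sum to $1$, so this forces $u_{vj} = 0$ and $u_{iv} = 0$ for all $i, j \neq v$. In particular, for any $p \in N(v)$ (which has $p \neq v$) we already get $u_{pv} = 0$; together with the no-self-loop convention $v \notin N(v)$, this dispatches the case $q = v$ among the non-neighbours.

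The core step is a one-line application of \cref{lem:commutation_adjacency}. Take $p \in N(v)$ and $q \notin N(v)$ with $q \neq v$, and apply the lemma to the index quadruple $(p, q, v, v)$, comparing $\mathrm{rel}(p, v)$ with $\mathrm{rel}(q, v)$. Since $p \sim v$, the pair $(p,v)$ realises the ``adjacent'' relation, whereas $q \nsim v$ (and $q \neq v$) makes $(q,v)$ realise the ``non-adjacent'' relation; hence $\mathrm{rel}(p,v) \neq \mathrm{rel}(q,v)$. The lemma then gives $u_{pq}\,u_{vv} = 0$, and multiplying through by $u_{vv} = 1$ yields $u_{pq} = 0$. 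Combined with the previous paragraph, this shows $u_{pq} = 0$ for every $p \in N(v)$ and every $q \notin N(v)$, which is the desired conclusion.

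I do not anticipate a genuine obstacle here: the argument reduces entirely to the commutation characterisation of \cref{lem:commutation_adjacency} together with the magic-unitary normalisation coming from $u_{vv} = 1$. The only point demanding a little care is the bookkeeping of the $\mathrm{rel}$ relation and the separate treatment of $q = v$ forced by the no-self-loop convention, but no idea beyond the already-established commutation lemma is needed.
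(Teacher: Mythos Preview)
Your proposal is correct and follows essentially the same approach as the paper: both reduce the claim to the identity $u_{vv}=1$ and then apply \cref{lem:commutation_adjacency} to the quadruple involving $v$ twice to force $u_{pq}=0$ when $p\in N(v)$ and $q\notin N(v)$. Your treatment is in fact slightly more careful, since you explicitly handle the case $q=v$ via the magic-unitary column condition, whereas the paper only addresses vertices at distance at least $2$ from $v$.
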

\begin{proof}
	Let $w \in V(X)$ be a neighbour of $v$, and $w'$ be a vertex at distance at least $2$ from $v$.
	We have $u_{ww'} = u_{vv}u_{ww'} = 0$, since $u_{vv} = 1$.
	However, since $v\sim w$ and $v\nsim w'$, we have that $u_{vv}u_{ww'} = 0$.
	Hence, the neighbours of $v$ form a union of orbits of $\Qut(X)$.
\end{proof}

The next two results deal with the case of a central/ non-central cut vertex in the block tree.
\begin{theorem}
	\label{thm:non-central-cut}
	Let $X$ be a connected graph and $\alpha$ be a cut vertex $\alpha \in \calC(X)$.
	Suppose that $B_1,\dots, B_n$ are children blocks of $\alpha$ such that for each child block $B$ of $\alpha$ we have $X^{\leq B}\cong X^{B_i}$, for some $i$, and the isomorphism class $X^{\leq B_i}$ is of size $k_i$.
	Further assume that $X^{\leq B_i} \cong_{q} X^{\leq B_j}$ if and only if $X^{\leq B_i} \cong X^{\leq B_j}$.
	Then, we have $$\Qut(X^{\leq \alpha})_\alpha \cong \bigast_{i=1}^n\Qut(X^{\leq B_i})_\alpha \fwr \bbS_{k_i}^+.$$
\end{theorem}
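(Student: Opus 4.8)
The plan is to reduce the statement to the disjoint-union description already established in \cref{lem:disjoint-union}. Write $Y = X^{\leq\alpha}$, let $Y' = Y\setminus\{\alpha\}$, and let $u$ be the fundamental representation of $\Qut(Y)_\alpha$, so that $u_{\alpha\alpha}=1$ and hence $u_{\alpha v}=u_{v\alpha}=0$ for every $v\neq\alpha$. The first step is to show that fixing $\alpha$ is the same as deleting it while remembering adjacency to it: colour each vertex of $Y'$ by its original colour together with a bit recording whether it is adjacent to $\alpha$, and call this colouring $c$. Since $u_{\alpha\alpha}=1$, the submatrix $u'$ of $u$ indexed by $V(Y')$ is again a magic unitary, and \cref{lem:neighbours-of-fixed-vertex} shows that it respects $c$ and intertwines $A_{Y'}$; conversely, any magic unitary for $\Qut_c(Y')$ extends to one for $\Qut(Y)_\alpha$ by setting $u_{\alpha\alpha}=1$ and $u_{\alpha v}=u_{v\alpha}=0$ for $v\neq\alpha$, the commutation $A_Y u = uA_Y$ then being automatic on the $\alpha$-row and $\alpha$-column thanks to the colour constraint. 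These two maps are mutually inverse on generators, giving $\Qut(Y)_\alpha \cong \Qut_c(Y')$.

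Next, since $\alpha$ is a cut vertex, $Y' = \bigsqcup_{B} Y_B$ is the disjoint union over the child blocks $B$ of $\alpha$, where $Y_B := X^{\leq B}\setminus\{\alpha\}$; each $Y_B$ is connected, as $\widetilde{B}=B\setminus\{\alpha\}$ is connected (being a block) and every descendant subgraph of $X^{\leq B}$ attaches to a cut vertex of $\widetilde B$. Under the colouring $c$, a colour-preserving (quantum) isomorphism $Y_B\to Y_{B'}$ is exactly a root-preserving (quantum) isomorphism $X^{\leq B}\to X^{\leq B'}$ fixing $\alpha$, so the colour-isomorphism classes of the $Y_B$ are precisely the classes represented by $B_1,\dots,B_n$, of sizes $k_1,\dots,k_n$. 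Transporting the hypothesis through this correspondence makes the representatives $Y_{B_1},\dots,Y_{B_n}$ pairwise non-quantum-isomorphic coloured graphs, so \cref{lem:disjoint-union} applies and yields
\[ \Qut_c(Y') \cong \bigast_{i=1}^n \Qut_c\!\left(Y_{B_i}\right) \fwr \bbS_{k_i}^+. \]
Applying the identification of the first paragraph in reverse to each $X^{\leq B_i}$ gives $\Qut_c(Y_{B_i}) \cong \Qut(X^{\leq B_i})_\alpha$, and substituting this into the display produces the claimed isomorphism.

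The two delicate points are the $C^*$-level verification that fixing a vertex coincides with deleting it and recolouring (first paragraph, in both directions) and the transport of the (quantum-)isomorphism hypothesis from the rooted graphs $X^{\leq B_i}$ to the coloured graphs $Y_{B_i}$. The latter is where I expect the main obstacle: one must be careful that $\cong$ and $\cong_q$ are read relative to the fixed root $\alpha$ (equivalently, relative to the adjacency-to-$\alpha$ colouring), since an abstract isomorphism $X^{\leq B_i}\to X^{\leq B_j}$ need not send $\alpha$ to $\alpha$, and it is precisely the hypothesis in this rooted sense that delivers the non-quantum-isomorphism required by \cref{lem:disjoint-union}. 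A more self-contained alternative would bypass the reduction and instead mirror the proofs of \cref{thm:disjoint-union} and \cref{thm:block_case} directly, constructing mutually inverse $\ast$-homomorphisms between $C(\Qut(X^{\leq\alpha})_\alpha)$ and $C\big(\bigast_{i} \Qut(X^{\leq B_i})_\alpha \fwr \bbS_{k_i}^+\big)$ by a case analysis establishing $W_{pq}W_{p'q'}=0$ whenever $\rel(p,p')\neq\rel(q,q')$, using \cref{lem:colouring-block-by-iso-type} to build the component maps in the reverse direction.
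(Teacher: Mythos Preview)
Your proposal is correct and follows essentially the same approach as the paper: both arguments reduce $\Qut(X^{\leq\alpha})_\alpha$ to the quantum automorphism group of the graph obtained by deleting $\alpha$ and recording adjacency to $\alpha$ via a colouring, recognise this as a disjoint union of the coloured pieces $X^{\leq B}\setminus\{\alpha\}$, and then invoke \cref{lem:disjoint-union}. The paper carries out the deletion/recolouring step by chaining several black-box lemmas from \cite{Q_Aut_Trees} (colouring orbits, deleting edges, removing an isolated coloured vertex), whereas you do it in one shot by exhibiting the mutually inverse $\ast$-homomorphisms directly; your observation that the hypothesis must be read in the rooted sense is well taken and is precisely what both arguments implicitly use when applying \cref{lem:disjoint-union}.
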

\begin{proof}
	By~\cref{lem:neighbours-of-fixed-vertex}, we see that the neighbours of $\{\alpha\}$ form a union of orbits of $\Qut(X^{\leq \alpha})_{\alpha}$.
	Hence, it follows from~~\cite[Lemma 4.1~(III)]{Q_Aut_Trees} that we may colour $\alpha$ and its neighbours with distinct colours $c_1$ and $c_2$ not appearing elsewhere in $X^{\leq\alpha}$ without changing $\Qut(X^{\leq\alpha})_\alpha$.
	Moreover, by~\cite[Lemma 4.1~(II)]{Q_Aut_Trees}, we may delete the edges between $\alpha$ and its neighbours still without changing $\Qut(X^{\leq\alpha})_\alpha$.
	Let the graph formed by deleting the edges between $\alpha$ and its neighbours be $\widetilde{X}$.
	Since $\alpha$ was a cut vertex, $\widetilde{X}$ is disconnected and its connected components are precisely the subgraphs of $X^{\leq B_i}$ induced by $V(X^{\leq B_i})\setminus\{\alpha\}$, alongside $\{ \alpha\}$.
	
	For a fixed $i$, let us denote this graph by $X_1^{\leq B_i}$.
	It follows from~\cite[Lemma 4.1~(IV)]{Q_Aut_Trees} that $\Qut(\widetilde{X}^{\leq B_i}) = \Qut(X_1^{\leq B_i}\sqcup\{\alpha\})$, where $\alpha$ is coloured with $c_1$.
	Since the vertices which were neighbours of $\alpha$ in $X$ are coloured with $c_2$ in $\widetilde{X}$, it follows from~\cite[lemma 4.1~(II)]{Q_Aut_Trees} that we may join $\alpha$ to each of its former neighbours which are in $X^{\leq B_i}$ with an edge.
	This graph, which we denote by $X_2^{\leq B_i}$, is $X^{\leq B_i}$ with $\alpha$ coloured in $c_1$ and its neighbours coloured in $c_2$.
	
	Because $\alpha$ is fixed, it follows from~\cref{lem:neighbours-of-fixed-vertex} that its neighbours form a union of orbits.
	Therefore, by another application of~\cite[Lemma 4.1~(III)]{Q_Aut_Trees} that we may de-colour the neighbours of $\alpha$ in $X_2^{\leq B_i}$ without changing its quantum automorphism group.
	Let us denote this graph by $X_3^{\leq B_i}$.
	Since $\alpha$ is the only vertex coloured $c_1$, it is not difficult to see that $\Qut(X_3^{\leq B_i}) \cong \Qut(X^{\leq B_i})_\alpha$.
	Hence, we have $\Qut(X_1^{\leq B_i})\cong \Qut(X^{\leq B_i})_{\alpha}$.
	Since this is true for an arbitrary $i$, the result now follows from a direct application of~\cref{lem:disjoint-union}.
\end{proof}

The following is a direct corollary of~\cref{thm:non-central-cut}. Note that it was also independently obtained by other means in~\cite{FMPbis2025} (see the end of Section 5).

\begin{corollary}
	\label{thm:central-cut}
	Let $X$ be a connected graph such that the central vertex of the block tree is a cut vertex $\alpha \in \calC(X)$.
	Suppose that $B_1,\dots, B_n$ are children blocks of $\alpha$ such that for each child block $B$ of $\alpha$ we have $X^{\leq B}\cong X^{B_i}$, for some $i$, and the isomorphism class $X^{\leq B_i}$ is of size $k_i$.
	Further assume that $X^{\leq B_i} \cong_{q} X^{\leq B_j}$ if and only if $X^{\leq B_i} \cong X^{\leq B_j}$.
	Then, we have $$\Qut(X) \cong \bigast_{i=1}^n\Qut(X^{\leq B_i})_\alpha \fwr \bbS_{k_i}^+.$$
\end{corollary}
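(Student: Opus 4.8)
The plan is to deduce the corollary directly from \cref{thm:non-central-cut} by checking that, when $\alpha$ is the \emph{central} cut vertex, the left-hand side of that theorem is already $\Qut(X)$. Two observations are needed. First, since the root of the rooted block tree $T(X)$ is its center $\alpha$, every node of $T(X)$ is visited from the root without leaving $\alpha$, so by the definition of $X^{\leq N}$ in \cref{subsec:prelim:graphs} we have $X^{\leq \alpha} = X$. Hence the conclusion of \cref{thm:non-central-cut} reads $\Qut(X)_\alpha \cong \bigast_{i=1}^n \Qut(X^{\leq B_i})_\alpha \fwr \bbS_{k_i}^+$, and it remains only to remove the stabilizer.

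Second, I would show that $\alpha$ is fixed by $\Qut(X)$. As recalled in \cref{subsec:prelim:graphs}, the center of a block tree always has size one, so here $Z(T(X)) = \{\alpha\}$ as a single node. By \cref{lem:levels_are_preserved}~\textit{v.} this center is a union of orbits of $\Qut(X)$, whence the orbit of $\alpha$ is contained in---and therefore equal to---$\{\alpha\}$. Writing $u$ for the fundamental representation of $\Qut(X)$, this means $u_{\alpha p} = 0$ for all $p \neq \alpha$; combined with $\sum_{p} u_{\alpha p} = 1$ it gives $u_{\alpha\alpha} = 1$. As noted after the definition of the quantum stabilizer, $u_{\alpha\alpha} = 1$ implies $\Qut(X)_\alpha \cong \Qut(X)$.

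Putting these together yields
\[ \Qut(X) \cong \Qut(X)_\alpha = \Qut(X^{\leq\alpha})_\alpha \cong \bigast_{i=1}^n \Qut(X^{\leq B_i})_\alpha \fwr \bbS_{k_i}^+, \]
the last step being \cref{thm:non-central-cut} applied under its (identical) hypotheses on $B_1,\dots,B_n$. I expect no serious obstacle here: the statement is genuinely a corollary, and the only point needing attention is the inference that the size-one center $\{\alpha\}$ is a fixed point rather than merely an invariant set---which is immediate once one knows that the center of a block tree is a single node.
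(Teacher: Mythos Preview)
Your proposal is correct and follows essentially the same argument as the paper: observe that $X^{\leq\alpha}=X$ because $\alpha$ is the root of $T(X)$, use \cref{lem:levels_are_preserved}~\textit{v.} to conclude that $\{\alpha\}$ is a singleton orbit so that $\Qut(X)\cong\Qut(X)_\alpha$, and then invoke \cref{thm:non-central-cut}. The only difference is that you spell out the step $u_{\alpha\alpha}=1$ explicitly, which the paper leaves implicit.
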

\begin{proof}
	Since $\alpha$ is the center of the block tree of $X$, we have that $X^{\leq \alpha} = X$ and it follows from~\cref{lem:levels_are_preserved} that $\{\alpha\}$ is a orbit of $\Qut(X)$.
	Hence, $\Qut(X) \cong \Qut(X)_\alpha$, and the result follows from~\cref{thm:non-central-cut}.
\end{proof}

\section{Calculations of Quantum Automorphism Groups}

In this section, we apply the machinery built in~\cref{sec:qut_connected}, to calculate quantum automorphism groups of several graph classes. We say that a class of graphs $\calG$ is \emph{hereditary} if it is closed under taking induced subgraphs, i.e., if $X \in \calG$ and $Y$ is an induced subgraph of $X$, then $Y \in \calG$.
If $\calG$ is a hereditary class of graphs such that for any $X,Y \in \calG$, $X \cong_q Y$ if and only if $X \cong Y$, we provide a recipe for computing the quantum automorphism group of any graph $X \in \calG$. 

In summary, the method proceeds by a bottom-up induction on the block tree of a connected graph and repeatedly applies~\cref{thm:block_case},~\cref{thm:non-central-cut}, and~\cref{thm:central-cut}.
In order to apply this method to calculate quantum automorphism groups of some concrete graph class, we need to be able to determine quantum automorphism groups and vertex stabilizers of quantum automorphism groups for biconnected graphs in that class. 

We will give some examples of such graph classes. The first one is forests, for which this recovers results from~\cite{Q_Aut_Trees, meunier2023quantum}. The second one is outerplanar graphs, which is a new family of graphs for which we can describe a procedure computing the quantum automorphism groups. The third one is block graphs, which were treated in~\cite{FMPbis2025} at the same time as the present work was conducted, so that we will simply sketch how our method can recover these results.


We now state and prove our main result. We fix a hereditary graph class $\calG$ such that for $X, Y \in \calG$, $X\cong_q Y$ if an only if $X \cong Y$. Since graphs in $\calG$ are always either isomorphic or quantum isomorphic, the quantum automorphism group of an arbitrary graph in $\calG$ can be expressed in terms of free products and free wreath products of quantum automorphism groups of connected graphs in $\calG$ by an application of~\cref{lem:disjoint-union}. Our main result gives a procedure to compute the quantum automorphism group of a connected graph $X \in \calG$ in terms of vertex stabilizers of quantum automorphism groups of its biconnected components:  

\begin{theorem}
	\label{thm:algorithm}
	The quantum automorphism group of a connected graph $X \in \calG$ can be be determined, from the vertex stabilizers of quantum automorphism groups of its non-central blocks and from the quantum automorphism group of the central block, using the free inhomogeneous wreath product of quantum groups.
\end{theorem}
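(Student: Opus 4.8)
The plan is to compute $\Qut(X)$ by a bottom-up induction along the block tree $T(X)$, propagating \emph{rooted} quantum automorphism groups and alternately applying \cref{thm:non-central-cut} at cut nodes and \cref{thm:block_case} at block nodes. To each non-central node $N$ of $T(X)$ I attach the rooted subgraph $X^{\leq N}$ together with a quantum group: if $N = B$ is a block node with parent cut vertex $\alpha$, this is the vertex stabiliser $\Qut(X^{\leq B})_\alpha$; if $N = \alpha$ is a cut node, it is $\Qut(X^{\leq\alpha})_\alpha$. The structural feature making the induction work is that these two types of data dovetail: the output of a cut-node step is exactly the input required by the block-node step immediately above it, and conversely.

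For the base case, the level-$0$ nodes of $T(X)$ are precisely the leaf block nodes, i.e.\ the biconnected induced subgraphs $B$ that are leaves of $T(X)$. For such a leaf $B$ with parent cut vertex $\alpha$ we have $X^{\leq B} = B$, so $\Qut(X^{\leq B})_\alpha = \Qut(B)_\alpha$ is among the permitted inputs (the vertex stabilisers of the quantum automorphism groups of the non-central blocks). In the inductive step, assume the attached quantum group has been computed for every node strictly below $N$. If $N = \alpha$ is a non-central cut node, its children are blocks grouped by isomorphism type of the $X^{\leq B_i}$ into isomorphism classes of representatives $B_1,\dots,B_n$ and sizes $k_i$, and \cref{thm:non-central-cut} gives $\Qut(X^{\leq\alpha})_\alpha \cong \bigast_{i=1}^{n} \Qut(X^{\leq B_i})_\alpha \fwr \bbS_{k_i}^+$, a free wreath product of previously computed data, which by \cref{prop:free_hom_are_inhom} is a free inhomogeneous wreath product. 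If $N = B$ is a non-central block node with parent cut vertex $\alpha$, its children cut vertices $\alpha_i$ carry the already computed groups $\Qut(X^{\leq\alpha_i})_{\alpha_i}$, and \cref{thm:block_case}(ii) expresses $\Qut(X^{\leq B})_\alpha$ as a free inhomogeneous wreath product of these with $\Qut_c(B)_\alpha$.

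The induction terminates at the center $Z(T(X))$. If the center is a block node $B$, then \cref{thm:block_case}(i) produces $\Qut(X)$ directly as a free inhomogeneous wreath product of the groups $\Qut(X^{\leq\alpha_i})_{\alpha_i}$ attached to its children with $\Qut_c(B)$, where $B$ is the central block supplied as input. If the center is a cut node $\alpha$, then $X^{\leq\alpha} = X$ and, since $\{\alpha\}$ is a quantum orbit by \cref{lem:levels_are_preserved}(v), \cref{thm:central-cut} yields $\Qut(X) \cong \bigast_{i=1}^{n}\Qut(X^{\leq B_i})_\alpha \fwr \bbS_{k_i}^+$. As $T(X)$ has finitely many levels, finitely many iterations of the step express $\Qut(X)$ as an iterated free inhomogeneous wreath product whose base data are the (stabilised, coloured) quantum automorphism groups of the blocks, as claimed.

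The main obstacle is to verify, at every node, the hypotheses of \cref{thm:block_case}, \cref{thm:non-central-cut}, and \cref{thm:central-cut}, each of which demands that the rooted subgraphs involved be isomorphic exactly when they are quantum isomorphic. For the unrooted conditions $X^{\leq B_i} \cong_q X^{\leq B_j} \Leftrightarrow X^{\leq B_i} \cong X^{\leq B_j}$ this is immediate: each $X^{\leq N}$ is an induced subgraph of $X$, hence lies in the hereditary class $\calG$, so the defining property of $\calG$ applies. The rooted statements $X^{\leq\alpha}_\alpha \cong_q X^{\leq\beta}_\beta \Leftrightarrow X^{\leq\alpha}_\alpha \cong X^{\leq\beta}_\beta$ are more delicate and form the crux of the argument: I would reduce them to the unrooted case by colouring the distinguished root vertex, so that rooted (quantum) isomorphism becomes coloured (quantum) isomorphism of induced subgraphs, and then invoke the class property in this coloured setting. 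A secondary, bookkeeping obstacle is to set up the colouring $c$ on each block $B$, colouring two children cut vertices alike precisely when their rooted subgraphs are isomorphic, which is a purely combinatorial check, and to observe that $\Qut_c(B)$ and $\Qut_c(B)_\alpha$ are obtained from $\Qut(B)$ (respectively its vertex stabiliser) by imposing these colour relations, so that they remain legitimate inputs to the procedure.
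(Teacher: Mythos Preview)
Your proof is correct and follows essentially the same bottom-up induction on $T(X)$ as the paper: leaf blocks as base case, alternating applications of \cref{thm:non-central-cut} at cut nodes and \cref{thm:block_case} at block nodes, terminating at the centre via \cref{thm:central-cut} or \cref{thm:block_case}(i). You are in fact more careful than the paper in flagging the need to verify the hypotheses of those theorems at each step.

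One comment on your final paragraph: your concern about the \emph{rooted} condition $X^{\leq\alpha}_\alpha \cong_q X^{\leq\beta}_\beta \Leftrightarrow X^{\leq\alpha}_\alpha \cong X^{\leq\beta}_\beta$ is legitimate, but your proposed reduction (colour the root and invoke the class property ``in this coloured setting'') does not quite work as stated, since $\calG$ is a class of uncoloured graphs and the hypothesis only guarantees $X \cong_q Y \Leftrightarrow X \cong Y$ for uncoloured $X,Y \in \calG$. The paper's own proof simply applies \cref{thm:block_case} without commenting on this, so it is not a defect of your argument relative to the paper; but if you want a clean justification you would need either to strengthen the standing assumption on $\calG$ to cover the rooted (equivalently, one-vertex-coloured) case, or to argue separately for each concrete class (forests, outerplanar graphs, block graphs) that the rooted implication holds --- which is what effectively happens in the applications.
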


The following proof is constructive, and can be considered an algorithm that, given a graph $X$, constructs the magic unitary of $\Qut(X)$. One can find a example of how to apply this algorithm in~\cref{fig:outerplanar_example}. 

For explicit constructions of magic unitaries, we refer the reader to~\cref{sec:qut_connected}.
Recall also that by~\cref{prop:free_hom_are_inhom}, free product and free wreath product are special cases of the free inhomogeneous wreath product.
We use this repeatedly in the following proof.

\begin{proof}
	We proceed by induction on the block tree $T(X)$ of $X$.
	In the base step of induction, one has to deal with leaves of $T(X)$, which are biconnected graphs in $\calG$. Let $\alpha \in \calC(X)$. Then, $X^{\leq \alpha}$ is a biconnected subgraph of $\calG$, so that $\Qut(X^{\leq \alpha})_\alpha$ is a vertex stabiliser of the quantum automorphism group of a biconnected graph in $\calG$. 
	This establishes the base of the induction for the leaves of the block tree, and we use this also several times in the induction step.

	For the induction step, we distinguish several cases:
	\begin{itemize}
		\item
		Let $\alpha$ be a non-central cut vertex and let $B_1,\dots,B_n$ be as in~\cref{thm:non-central-cut}.
		By theorem~\cref{thm:non-central-cut}, we have
		\[ \Qut(X^{\leq\alpha})_\alpha \cong \bigast_{i=1}^n\Qut(X^{\leq B_i})_\alpha \fwr \bbS_{k_i}^+. \]
		
		\item
		Let $\alpha$ be the central cut vertex and let $B_1,\dots,B_n$ be as in~\cref{thm:non-central-cut}.
		By theorem~\cref{thm:non-central-cut}, we have
		\[ \Qut(X) \cong \bigast_{i=1}^n\Qut(X^{\leq B_i})_\alpha \fwr \bbS_{k_i}^+. \]
		
		\item
		Let $B$ be a non-central block with a parent cut vertex $\alpha$ and $\alpha_1,\dots, \alpha_n$ as in~\cref{thm:block_case}, part (ii). It follows from theorem~\cref{thm:block_case}, part (ii), that 
		\[ \Qut(X^{\leq B})_\alpha \cong (\Qut(X^{\leq\alpha_1}),\dots,\Qut(X^{\leq\alpha_n}))\fgwr \Qut_c(B)_\alpha. \]
		
		\item
		Let $B$ be the central block and $\alpha_1,\dots, \alpha_n$ as in~\cref{thm:block_case}, part (i). It follows from theorem~\cref{thm:block_case}, part (i), that 
		\[ \Qut(X) \cong (\Qut(X^{\leq\alpha_1}),\dots,\Qut(X^{\leq\alpha_n}))\fgwr \Qut_c(B). \]
	\end{itemize}
	This concludes the induction step and the proof.
\end{proof}

\subsection{Forests}
A \emph{forest} is a union of trees, i.e., it is a graph with no cycles. In \cite{Q_Aut_Trees, meunier2023quantum}, it was shown that the quantum automorphism group of a forest can be obtained through an iteration of free products and free wreath products by quantum symmetric groups, starting with the trivial group.

If $\calT$ denotes the set of all forests, it is easy to see that $\calT$ is hereditary. It also follows from~\cite[Lemma 2.11]{Q_Aut_Trees} that two forests are isomorphic if and only if they are quantum isomorphic. Also note that biconnected components of forests are edges and that the vertex stabilizer of quantum automorphism group of an edge is the trivial group. 

Hence, it follows from~\cref{thm:algorithm} that the quantum automorphism group of a tree (a connected forest) can be obtained iteratively via free products, free wreath products with the quantum symmetric groups, and free inhomogeneous wreath products. 

One can show that the use of the free inhomogeneous wreath product can be avoided.
Indeed, the only cases where one has the free inhomogeneous wreath product is when the quantum automorphism group of some tree $T$ is expressed as $(\Qut(T^{\leq \alpha})_\alpha, \Qut(T^{\leq \beta})_\beta) \wr_* \bbS_2^+$, where $\alpha\beta \in E(T)$ is the center of $T$.
Evidently, from~\cref{prop:one-prod-to-rule-them-all} if $T^{\leq \alpha}_{\alpha} \cong T^{\leq\beta}_\beta$, we have $(\Qut(T^{\leq \alpha})_\alpha, \Qut(T^{\leq \beta})_\beta) \gwr_* \bbS_2^+ \cong \Qut(X^{\leq \alpha})_{\alpha}\gwr_* \bbS_2^+$.
On the other hand, if $T^{\leq \alpha}_{\alpha} \not\cong T^{\leq\beta}_\beta$, then $(\Qut(T^{\leq \alpha})_\alpha, \Qut(T^{\leq \beta})_\beta) \gwr_* \bbS_2^+ \cong \Qut(X^{\leq \alpha})_{\alpha}\ast \Qut(X^{\leq \beta})_\beta$.

\subsection{Outerplanar graphs.}

Recall that a graph is \emph{planar} if it can be drawn in the plane without intersections.
It is known from~\cite[Corollary A.15]{meunier2023quantum} that two planar graphs are isomorphic if and only if they quantum isomorphic.
Moreover, they constitute a hereditary graph class, so \cref{thm:algorithm} applies to the class of all planar graphs.
However, we do not have a complete understanding of quantum automorphism groups of biconnected planar graphs at the moment. 

In this section, we focus on a subclass of planar graphs, namely the outerplanar graphs.
A graph is \emph{outerplanar} if it can be drawn without intersections in the plane such that all the vertices lie on the outer face.
Outerplanar graphs also form a hereditary class of graphs, so that \cref{thm:algorithm} applies to them.

Every biconnected outerplanar graph $X$ has a unique Hamiltonian cycle \cite[Corollary to Theorem 6]{Syslo-outerplanar}, formed by the boundary of the outer face in any outerplanar drawing of the graph.
If $X$ is a biconnected outerplanar graph, we say that an edge $e \in V(X)$ is \emph{outer} if it belongs to the unique Hamiltonian cycle and \emph{inner} if it does not.
The inner edges are also called \emph{chords}.
We denote the sets of outer and inner edges in $X$ as $E_o(X)$ and $E_i(X)$, respectively.
The following result is folklore, but we include a short proof for completeness.

\begin{proposition}
	\label{prop:WL_outerplanar}
	Let $X$ be a biconnected outerplanar graph.
	Then $2$-WL distinguishes the outer edges of $X$ from the inner edges of $X$.
\end{proposition}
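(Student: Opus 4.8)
The plan is to reduce the statement to a clean connectivity characterization of chords and then to argue that this characterization is visible to $2$-WL. Write $C$ for the unique Hamiltonian cycle of the biconnected outerplanar graph $X$ (which exists by the result quoted just above the proposition), so that $E_o(X) = E(C)$ and the chords are the remaining edges. I would first prove the structural dichotomy: \emph{an edge $uv$ is a chord if and only if $\{u,v\}$ is a separating pair of $X$}. For the forward direction, if $uv$ is a chord then $u$ and $v$ are non-consecutive on $C$, so deleting them splits $C$ into two nonempty arcs $A_1, A_2$; by outerplanarity no edge can join $A_1$ to $A_2$, since in the chord diagram of $X$ (all vertices on a circle in Hamiltonian order $u, A_1, v, A_2$) such an edge would alternate with, hence cross, the chord $uv$. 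Every surviving edge therefore stays inside a single arc, so $X \setminus \{u,v\}$ is disconnected. Conversely, if $uv$ is an outer edge then $u,v$ are consecutive on $C$ and $X \setminus \{u,v\}$ still contains the Hamiltonian path $C \setminus \{u,v\}$, hence is connected. (The case $\card{V(X)} = 3$ is vacuous, since $K_3$ has no chords.)

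The second and harder half is to show that $2$-WL sees this separating-pair property. As a first layer of information, $2$-WL records for the ordered pair $(u,v)$ the number of common neighbours of $u$ and $v$ together with the refined colours of the two endpoints; in particular every chord endpoint has degree at least $3$ (its two cycle edges plus the chord), while a forbidden-$K_{2,3}$/crossing argument bounds the number of common neighbours of an outer edge by $1$ and of a chord by $2$. These first-round invariants already separate many pairs, but not all: a chord lying on at most one triangle and joining two vertices of degree $\geq 3$ can have exactly the same one-round statistics as an outer edge. To resolve the remaining pairs I would peel the graph along the leaves of its weak dual tree: a leaf bounded face is an \emph{ear}, consisting of a single chord together with a path of outer edges whose interior vertices have degree $2$ (any chord at such a vertex would either subdivide the ear or cross the ear-defining chord). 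The degree-$2$ interior vertices and the outer edges incident to them are immediately flagged by refinement, and I would show by induction on the number of chords (equivalently on the depth of the weak dual tree) that successive $2$-WL refinement rounds propagate inward exactly as the peeling does, eventually assigning the two endpoints of each ear's chord a colour that certifies the separation and thereby distinguishes that chord from all outer edges.

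The main obstacle is precisely this inductive step: $2$-WL colours pairs rather than deleting vertices, so one must verify that refinement keeps pace with the ear-peeling and that no chord is confused with an outer edge at any intermediate stage. This is where the genuinely global connectivity content sits, and it is the analogue for separating pairs of the cut-vertex and block detection used earlier via \cite{kiefer2019weisfeiler}. Indeed, once the separating-pair characterization is established, the cleanest finish is to invoke that the $2$-WL stable colouring refines the separating-pair structure of a graph directly, which yields the claim at once, since by the dichotomy above the chords are exactly the edges whose endpoints separate $X$.
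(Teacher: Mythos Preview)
Your proof is correct and ultimately coincides with the paper's: the paper also characterizes the inner edges as precisely those edges whose endpoints form a $2$-separator, and then cites a known result (Kiefer--Neuen) that $2$-WL distinguishes $2$-separators from non-$2$-separators, which is exactly your final sentence. Your structural dichotomy is argued in more detail than the paper's one-line assertion, and is fine.

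The long middle portion---the ear-peeling induction along the weak dual tree---is a detour you yourself do not complete (you flag the inductive step as ``the main obstacle'' and never carry it out), and it is entirely superseded by the clean finish you state at the end. Drop it: once you have the separating-pair characterization, invoking the $2$-WL/separating-pair result is the whole proof, and the first-round degree/common-neighbour observations and the peeling sketch add nothing.
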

\begin{proof}
	Recall that a vertex set $S \subseteq V(X)$ is called a \emph{$k$-separator} if $|S| = k$ and $\conn(X \setminus S) > \conn(X)$.
	By \cite[Theorem 5.2]{Kiefer-Neuen-decompose}, we have $\wl_X(v,w) \neq \wl_X(x,y)$ if $\{v,w\}$ is a $2$-separator and $\{x,y\}$ is not.
	In a biconnected outerplanar graph $X$, the set $E_i(X)$ of inner edges is exactly the set of edges whose endpoints form a $2$-separator, so the result follows immediately.
\end{proof}

This has a number of direct consequences for quantum automorphism groups of biconnected outerplanar graphs.
First, the quantum automorphism group of $X$ is contained in that of its Hamiltonian cycle.

\begin{corollary}
	\label{cor:outerplanar}
	Let $X$ be a biconnected outerplanar graph, and let $H \subseteq X$ be the unique Hamiltonian cycle.
	Then $\Qut(X) \leq \Qut(H)$.
\end{corollary}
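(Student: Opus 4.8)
The plan is to show that the fundamental representation $u$ of $\Qut(X)$, which is a magic unitary satisfying $uA_X = A_Xu$, in fact also commutes with $A_H$. Since $H$ is a Hamiltonian (hence spanning) cycle, we have $V(H) = V(X)$, so $u$ is already indexed by the correct vertex set for $\Qut(H)$. Once $uA_H = A_Hu$ is established, the universal property of $\Qut(H)$ yields a $*$-homomorphism $C(\Qut(H)) \to C(\Qut(X))$ sending the fundamental representation of $\Qut(H)$ to $u$; this is surjective because $u$ generates $C(\Qut(X))$, and it intertwines the comultiplications since both are given by the same formula on the respective fundamental representations. This exhibits $\Qut(X)$ as a quantum subgroup of $\Qut(H)$, which is exactly the claim.

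To prove $uA_H = A_Hu$, I would apply \cref{lem:commutation_adjacency} to the graph $H$: it reduces the commutation relation to showing $u_{ij}u_{kl} = 0$ whenever $\mathrm{rel}_H(i,k) \neq \mathrm{rel}_H(j,l)$, where $\sim$ now refers to adjacency in $H$, i.e.\ to the outer edges of $X$. I would feed this into \cref{lem:wl-quantum-automorphims}, which gives $u_{ij}u_{kl} = 0$ as soon as $\wl_X(i,k) \neq \wl_X(j,l)$. Thus the whole problem reduces to the purely combinatorial claim that $\wl_X$ refines $\mathrm{rel}_H$: namely, $\mathrm{rel}_H(i,k) \neq \mathrm{rel}_H(j,l)$ implies $\wl_X(i,k) \neq \wl_X(j,l)$.

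To verify this refinement, recall that $\mathrm{rel}_H$ has three classes: the diagonal pairs $i=k$, the $H$-edges (which are exactly the outer edges of $X$), and the $H$-non-edges. First, $\wl_X$ always separates diagonal from off-diagonal pairs, since the initial colouring $c_0$ assigns the distinguished value $0$ precisely to the diagonal and refinement preserves this. Second, an $H$-non-edge is either an inner edge (a chord) or a non-edge of $X$. The initial colouring already distinguishes edges of $X$ (colour $1$) from non-edges (colour $2$), so $\wl_X$ separates outer edges from non-edges; and \cref{prop:WL_outerplanar} states exactly that $\wl_X$ separates outer edges from inner edges. Hence an outer-edge pair receives a WL-colour distinct from that of every $H$-non-edge pair, which establishes the refinement.

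Combining the pieces, for every quadruple $i,j,k,l$ with $\mathrm{rel}_H(i,k) \neq \mathrm{rel}_H(j,l)$ we obtain $\wl_X(i,k) \neq \wl_X(j,l)$, hence $u_{ij}u_{kl} = 0$ by \cref{lem:wl-quantum-automorphims}, and then $uA_H = A_Hu$ by \cref{lem:commutation_adjacency}. The only substantive input is the refinement claim, whose entire content lies in \cref{prop:WL_outerplanar}; the remaining steps are bookkeeping with the initial colouring and the diagonal/magic-unitary structure. Consequently the main obstacle has already been handled by the preceding proposition, and no further outerplanarity-specific argument is required.
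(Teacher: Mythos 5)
Your proposal is correct and follows essentially the same route as the paper: both reduce the claim to showing that $u$ commutes with the adjacency matrix of the outer cycle, using \cref{prop:WL_outerplanar} together with \cref{lem:wl-quantum-automorphims} to kill the mixed outer/inner products and \cref{lem:commutation_adjacency} to convert the vanishing conditions into the commutation $uA_H = A_Hu$. The only cosmetic difference is that you handle the outer-edge versus $X$-non-edge case via the initial WL colouring, whereas the paper notes it follows directly from $u$ commuting with $A_X$; both are fine.
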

\begin{proof}
	Let $u$ be the fundamental representation of $\Qut(X)$.
	By Proposition \ref{prop:WL_outerplanar} and Lemma \ref{lem:wl-quantum-automorphims}, we have $u_{xy}u_{x'y'} = 0$ if $x$ and $x'$ are connected by an outer edge while $y$ and $y'$ are connected by an inner edge.
	The same equality of course holds if $y$ and $y'$ are not connected at all in $X$ and similarly, $u_{xy}u_{x'y'} = 0$ if $x$ is not connected to $x'$ by an outer edge while $y$ and $y'$ are.
	Therefore, if we denote by $X^{o}$ the graph obtained from $X$ by removing all inner edges, it follows from Lemma \ref{lem:commutation_adjacency} that $u A_{X^{o}} = A_{X^{o}} u$.
	As a consequence, $\Qut(X)\leq \Qut(X^{o}) = \Qut(H)$.
\end{proof}

Second, this yields a characterization of quantum symmetry.

\begin{corollary}
	\label{cor:qut_2con_outerplanar}
	Let $X$ be a biconnected outerplanar graph with $|V(X)| \neq 4$.
	Then $X$ has no quantum symmetry.
\end{corollary}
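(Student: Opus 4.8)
The plan is to reduce the statement to the analogous (classical) fact about cycles, exploiting the containment proved in \cref{cor:outerplanar}. Write $n = |V(X)|$ and, assuming $n \geq 3$, let $H \subseteq X$ be the unique Hamiltonian cycle, so that $H \cong C_n$. Then \cref{cor:outerplanar} gives $\Qut(X) \leq \Qut(H) = \Qut(C_n)$. The entire argument now rests on the well-known fact that the cycle $C_n$ carries no quantum symmetry whenever $n \neq 4$: for $n \geq 5$ one has $\Qut(C_n) = D_n$, while $\Qut(C_3) = \Qut(K_3) = S_3$. This is a classical computation (originally due to Banica; see \cite{banica2005quantum}), and I would simply invoke it rather than reprove it.

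Granting this input, the conclusion is essentially formal, and I would carry it out in three short steps. First, apply \cref{cor:outerplanar} to obtain the inclusion $\Qut(X) \leq \Qut(C_n)$. Second, recall that $C_n$ having no quantum symmetry means precisely that the $C^*$-algebra $C(\Qut(C_n))$ is commutative. Third, observe that because $\Qut(X)$ is a quantum subgroup of $\Qut(C_n)$, there is a surjective $*$-homomorphism $C(\Qut(C_n)) \to C(\Qut(X))$; since any quotient of a commutative $C^*$-algebra is again commutative, $C(\Qut(X))$ is commutative, which is exactly the assertion that $X$ has no quantum symmetry. In other words, a quantum subgroup of a classical group is classical.

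Two minor points remain to be dispatched. The reduction through the Hamiltonian cycle requires $n \geq 3$; the remaining degenerate cases ($X = K_2$, and possibly $K_1$) have $\Qut(X)$ equal to $S_2$ or the trivial group, which are classical and satisfy $n \neq 4$, so they are handled trivially. Moreover, the hypothesis $n \neq 4$ is exactly what excludes the single cycle $C_4 = K_{2,2}$, which does possess genuine quantum symmetry; this is precisely the source of the numerical restriction in the statement. I expect no real obstacle here: the only substantive ingredient is the external cycle computation, and the passage from $C_n$ to $X$ is an immediate consequence of \cref{cor:outerplanar} combined with the elementary principle that commutativity of $C(\Qut(\cdot))$ is inherited by quantum subgroups.
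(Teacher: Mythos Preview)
Your proposal is correct and follows essentially the same route as the paper: invoke \cref{cor:outerplanar} to embed $\Qut(X)$ into $\Qut(C_n)$, then use that cycles with $n\neq 4$ have no quantum symmetry (citing \cite{banica2005quantum}), and conclude via the principle that quantum subgroups of classical groups are classical. The only cosmetic difference is in the handling of small cases: the paper disposes of $n\leq 3$ in one stroke via $\Qut(X)\leq \bbS_3^+$ (which is classical), whereas you treat $n=3$ through $C_3$ and the degenerate cases $n\leq 2$ separately; both are fine.
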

\begin{proof}
	When $|V(X)| \leqslant 3$, the result follows because $\Qut(X) \leq \bbS_3^+$ and $\bbS_3^+$ is classical.
	Now assume that $|V(X)| \geqslant 5$, and let $H$ be the unique Hamiltonian cycle in $X$.
	It was proved in \cite[Lemma 3.5]{banica2005quantum} that cycles of length $\geqslant 5$ have no quantum symmetry, so it follows from Corollary \ref{cor:outerplanar} that $X$ has no quantum symmetry.
\end{proof}

\begin{remark}
	When $|V(X)| = 4$, there are only two biconnected outerplanar graphs: the square $C_{4}$ and the diamond graph $D$, which is $C_{4}$ plus one diagonal.
	To compute the quantum automorphism groups, we use that $\Qut(X) = \Qut(\overline{X})$.
	We have $\overline{C_4} = K_2 \sqcup K_2$, hence $\Qut(C_{4}) = H_{2}^{2+} = \bbS_{2}^+ \fwr \bbS_{2}^+$.
	Likewise, we have $\overline{D} = K_2 \sqcup K_1 \sqcup K_1$, and therefore $\Qut(D) = \bbS_{2}^+ \ast \bbS_{2}^+$.
\end{remark}

It follows that the vertex stabilizers are classical and subgroups of $S_2 = \bbS_{2}^{+}$.

\begin{corollary}\label{cor:stab_biconnected_outerplanar}
	Let $X$ be a biconnected outerplanar graph.
	Then $\Qut(X)_x \leq \bbS_2^+$ for all $x \in V(X)$.
\end{corollary}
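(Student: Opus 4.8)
The plan is to reduce the whole statement to a single computation about cycles, and then dispatch a short list of cases. By \cref{cor:outerplanar} we have $\Qut(X) \leq \Qut(H)$, where $H \cong C_n$ is the unique Hamiltonian cycle of $X$ and $n = |V(X)|$; this already presupposes $|V(X)| \geq 3$, and the degenerate cases $|V(X)| \leq 2$ (where $X$ is a single edge or vertex) are immediate, since there the stabilizer of any vertex is trivial and hence a quantum subgroup of $\bbS_2^+$. The first genuine step is to note that passing to vertex stabilizers is monotone with respect to the quantum subgroup relation: the surjection $C(\Qut(H)) \twoheadrightarrow C(\Qut(X))$ carrying fundamental representation to fundamental representation sends the generator $v_{xx}$ to $u_{xx}$, so it descends to a surjection of the quotients by $\langle v_{xx}=1\rangle$ and $\langle u_{xx}=1\rangle$. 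This gives $\Qut(X)_x \leq \Qut(H)_x = \Qut(C_n)_x$, so it suffices to prove $\Qut(C_n)_x \leq \bbS_2^+$ for every $n \geq 3$.

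For $n \geq 3$ with $n \neq 4$, I would exploit that $C_n$ is itself a biconnected outerplanar graph on $n \neq 4$ vertices, so by \cref{cor:qut_2con_outerplanar} it has no quantum symmetry and $\Qut(C_n) = \Aut(C_n) = D_n$ is classical. The stabilizer of a vertex in the dihedral group $D_n$ acting on the $n$-cycle consists only of the identity and the unique reflection fixing that vertex, hence has order two and is isomorphic to $\bbZ_2 \cong \bbS_2^+$. (For $n=3$ this specializes correctly, $D_3 = S_3$ having vertex stabilizer $S_2$.) In particular $\Qut(C_n)_x \leq \bbS_2^+$ in this range.

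The remaining, and genuinely quantum, case is $n = 4$, where $\Qut(C_4) = \bbS_2^+ \fwr \bbS_2^+$ really does have quantum symmetry, so classicality cannot be invoked. Here I would pass to the complement, using $\Qut(C_4) = \Qut(\overline{C_4})$ with $\overline{C_4} = K_2 \sqcup K_2$, whose components are $\{1,3\}$ and $\{2,4\}$ when $C_4$ is drawn on $1,2,3,4$. Fixing $x = 1$, \cref{lem:neighbours-of-fixed-vertex} forces the unique neighbour $3$ of $1$ in $\overline{C_4}$ to be fixed as well, so the entire component $\{1,3\}$ is fixed pointwise. Colouring these two fixed vertices with fresh colours (which does not change the stabilizer) presents $\overline{C_4}$ as a disjoint union of a rigidly coloured $K_2$ and an uncoloured $K_2$; these are not quantum isomorphic, so by the free-product description of disjoint unions \cite[Lemma 6.4]{Q_Aut_Trees} the stabilizer is $1 \ast \bbS_2^+ = \bbS_2^+$. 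Assembling the three cases yields $\Qut(C_n)_x \leq \bbS_2^+$ for all $n$, and the corollary follows from the reduction in the first paragraph.

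I expect the main obstacle to be exactly this $n = 4$ case: it is the only size at which the Hamiltonian cycle carries quantum symmetry, so one cannot finish by citing classicality and must instead argue directly that fixing one vertex collapses the wreath-product symmetry of $\bbS_2^+ \fwr \bbS_2^+$ down to a single copy of $\bbS_2^+$. The complement trick together with \cref{lem:neighbours-of-fixed-vertex} is what makes this tractable, so the care lies in justifying the recolouring and non-quantum-isomorphism step cleanly.
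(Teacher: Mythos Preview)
Your proof is correct and follows essentially the same strategy as the paper: reduce via \cref{cor:outerplanar} to the Hamiltonian cycle, invoke classicality for $n\neq 4$ to land in a dihedral vertex stabilizer of order two, and handle $n=4$ by a direct computation. The organization differs slightly: you pass to stabilizers first and then work entirely inside $\Qut(C_n)_x$, whereas the paper applies \cref{cor:qut_2con_outerplanar} to $X$ itself before stabilizing. Your ordering has the small payoff that the $n=4$ case reduces to $C_4$ alone (the diamond $D$ is absorbed by $\Qut(D)\leq\Qut(C_4)$), while the paper treats $C_4$ and $D$ as separate cases dismissed with ``a simple case analysis''; your complement argument via \cref{lem:neighbours-of-fixed-vertex} is a clean way to make that analysis explicit.
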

\begin{proof}
	Assume first that $|V(X)| \neq 4$.
	Then $\Qut(X)$ is classical, so by \autoref{cor:outerplanar} it is a subgroup of $\Aut(H) = D_{|H|}$, the dihedral group of order $2|H|$.
	Since every vertex stabilizer of $D_{|H|}$ has order $2$, the result follows.
	
	When $|V(X)| = 4$, we have $X = C_4$ or $X = D$, and a simple case analysis shows that $\Qut(X)_x = \bbS_2^+$ in all cases.
\end{proof}

It follows from the preceding results that the quantum automorphism group of every outerplanar graph can be written as an iterated free inhomogeneous wreath product of quantum symmetric groups and at most one dihedral or cyclic group (for the central block).
However, we do not claim that every quantum group of this form is the quantum automorphism group of an outerplanar graph.

\autoref{fig:outerplanar_example} shows an example of outerplanar graph and its quantum automorphism group.
It shows the quantum automorphism groups of the central block (isomorphic to a $C_4$) and of the subgraphs $X^{\leq \alpha}$ (highlighted in grey circles), where $\alpha$ is one of the four cut vertices in the central block.
The algorithm from \autoref{thm:algorithm} is used to compute the quantum automorphism groups of the four subgraphs and then of the whole graph.

\begin{figure}
	\centering
	\includegraphics{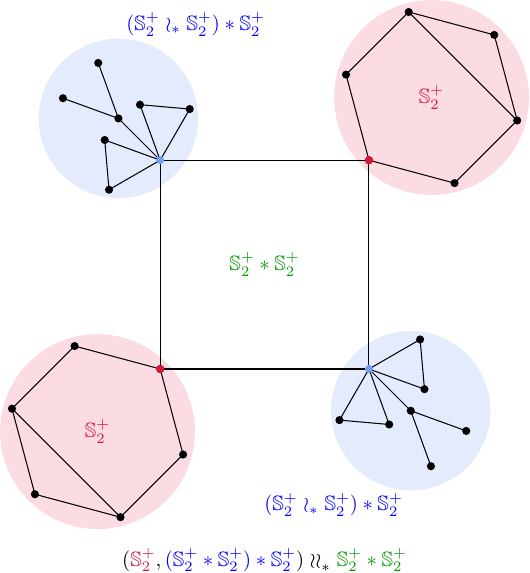}
	\caption{An outerplanar graph and its quantum automorphism group.}
	\label{fig:outerplanar_example}
\end{figure}

\begin{theorem}\label{thm:qut_stab_outerplanar}
    The class $\calS$ of all quantum stabilizers of vertices of outerplanar graphs can be constructed inductively as follows:
    \begin{itemize}
        \itemsep0cm
        \item[(i)] $\one \in \calS$,
        \item[(ii)] if $\mathbb{G}, \mathbb{H} \in \calS$, then $\mathbb{G}\ast\mathbb{H} \in \calS$,
        \item[(iii)] if $\mathbb{G} \in \calS$, then $\mathbb{G}\fwr\mathbb{S}_n^+ \in \calS$, for $n\in\mathbb{N}$.
    \end{itemize}
    Moreover, the class $\calS$ is exactly the class of quantum automorphism groups of (rooted) trees.
\end{theorem}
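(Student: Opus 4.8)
The plan is to prove the chain of inclusions $\calS \subseteq \calI \subseteq \calR \subseteq \calS$, where $\calI$ denotes the smallest class of compact quantum groups containing $\one$ and closed under the operations (ii) and (iii), and $\calR$ denotes the class of quantum automorphism groups $\Qut(T)_r$ of rooted trees $(T,r)$. Establishing this chain simultaneously yields $\calS = \calI$ (the inductive description) and $\calS = \calR$ (the identification with rooted trees).

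The two outer inclusions are the easy ones. For $\calR \subseteq \calS$, every tree is outerplanar and $\Qut(T)_r$ is by definition the quantum stabilizer of the vertex $r$. For $\calI \subseteq \calR$, I check that $\calR$ contains $\one$ (the one-vertex tree) and is closed under (ii) and (iii). Closure under (iii) is immediate: given $\bbG = \Qut(S)_{r_S}$, I attach $n$ disjoint copies of $S$ to a new common root, so that this root is a cut vertex with $n$ isomorphic branches, and \cref{thm:non-central-cut} gives $\bbG \fwr \bbS_n^+$. Closure under (ii) needs a small trick: to realize $\bbG \ast \bbH$ with $\bbG = \Qut(S)_{r_S}$ and $\bbH = \Qut(S')_{r_{S'}}$, I attach $S$ and $S'$ to a common new root; if $S \cong S'$ I first graft a long pendant path onto the root of $S'$, which leaves $\Qut(S')_{r_{S'}}$ unchanged (the path is a rigid branch of multiplicity one, contributing a trivial free factor) while making the two branches non-isomorphic, so that \cref{thm:non-central-cut} yields a free product rather than a wreath product.

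The substance is the inclusion $\calS \subseteq \calI$, which I would prove by induction on $|V(X)|$: for every outerplanar $X$ and $v \in V(X)$ I show $\Qut(X)_v \in \calI$. Rooting the block tree at $v$, there are two cases. If $v$ is a cut vertex, its incident branches group by isomorphism type and \cref{thm:non-central-cut} (applied with $v$ as root) gives $\Qut(X)_v \cong \bigast_i \Qut(X^{\leq B_i})_v \fwr \bbS_{k_i}^+$; each factor is a vertex stabilizer of a strictly smaller outerplanar graph, hence lies in $\calI$ by induction, and $\calI$ is closed under $\ast$ and $\fwr\bbS_{k_i}^+$. If $v$ lies in a unique block $B$, then the stabilizer version of \cref{thm:block_case} gives $\Qut(X)_v \cong (\Qut(X^{\leq\alpha_1})_{\alpha_1},\dots)\fgwr \Qut_c(B)_v$, where the hypotheses of \cref{thm:block_case} hold automatically because $\cong$ and $\cong_q$ coincide on $\calG$. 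By \cref{cor:stab_biconnected_outerplanar} we have $\Qut_c(B)_v \in \{\one,\bbS_2^+\}$, and the children stabilizers $\Qut(X^{\leq\alpha_j})_{\alpha_j}$ lie in $\calI$ by induction. The base case $X = K_1$ gives $\one$, and biconnected $X$ is the sub-case $B = X$ with no child branches, giving $\Qut_c(X)_v \in \{\one,\bbS_2^+\} \subseteq \calI$ (here $\bbS_2^+ = \one \fwr \bbS_2^+$).

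The main obstacle is precisely the case $\Qut_c(B)_v \cong \bbS_2^+$. The single reflection underlying $\bbS_2^+ = \bbZ_2$ acts on the children cut vertices of $B$ with orbits of size $1$ (at most the antipode of $v$) and size $2$, and the \emph{same} generating projection of $C(\bbS_2^+)$ couples all the swapped pairs at once; naively this is a ``diagonal'' free wreath product, which is not visibly of the form (ii)/(iii). The key lemma that unlocks the induction is the identity
\[ (\bbG_0,\bbG_1,\dots,\bbG_s)\fgwr\bbS_2^+ \;\cong\; \bbG_0 \ast \big((\bbG_1 \ast \cdots \ast \bbG_s)\fwr\bbS_2^+\big), \]
valid when $\bbS_2^+$ fixes the orbit carrying $\bbG_0$ and transposes the pairs carrying $\bbG_1,\dots,\bbG_s$ (with any trivial orbits simply dropping out). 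I would prove it directly from the universal properties in \cref{cstr:FIWP}: on the fixed orbit one has $h_{cc}=1$, so $\bbG_0$ commutes with everything and splits off as a free factor, while on the transposed pairs the relations $[g^{(i,\alpha)},h_{\alpha\beta}]=0$ are exactly the defining relations of the free wreath product of $\bbG_1 \ast \cdots \ast \bbG_s$ by $\bbS_2^+$, the two free copies of each $\bbG_i$ playing the role of the two diagonal blocks. Granting this lemma, the $\bbS_2^+$ case reduces to a free product and a single $\fwr\bbS_2^+$ of elements already in $\calI$, which closes the induction and completes the chain.
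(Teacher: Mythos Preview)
Your proposal is correct and follows the same approach as the paper: both run the block-tree induction of \cref{thm:algorithm} for the inclusion $\calS \subseteq \calI$ (using \cref{cor:stab_biconnected_outerplanar} to reduce to $\Qut_c(B)_v \in \{\one,\bbS_2^+\}$) and invoke the rooted-tree realization for the reverse direction (the paper by citing \cite{Q_Aut_Trees}, you by a direct construction with the pendant-path trick). You are in fact more explicit than the paper at the crux: the paper simply asserts that the free inhomogeneous wreath product with $\bbS_2^+$ ``is just the free wreath product,'' whereas you formulate and justify the precise identity $(\bbG_0,\bbG_1,\dots,\bbG_s)\fgwr\bbS_2^+ \cong \bbG_0 \ast \big((\bbG_1 \ast \cdots \ast \bbG_s)\fwr\bbS_2^+\big)$ that makes this reduction work.
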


\begin{proof}
First, we argue that every quantum group in $\calS$ can be realized as a quantum stabilizer of some outerplanar graph. It was proved in~\cite{Q_Aut_Trees} that every quantum group in $\calS$ can be realized as the quantum automorphism group of a rooted tree. A rooted tree is just a tree with one vertex coloured by a unique colour. Since every tree is outerplanar, this gives us what we need.

Let $X$ be an outerplanar graph and $\alpha$ any vertex of $X$.
The vertex $\alpha$ belongs to some block $B$ of $X$. We consider the underlying block-tree to be rooted at $B$.
To calculate $\Qut(X)_\alpha$, we now proceed as in the proof of \cref{thm:algorithm}.
By \cref{cor:stab_biconnected_outerplanar}, we only encouter the free inhomogeneous wreath product with $\mathbb{S}_2^+$. However, in this case this is just the free wreath product.
\end{proof}

\subsection{Block graphs.}

A \emph{block graph} is a graph in which every block is a complete graph. The quantum properties of block graphs were comprehensively investigated in \cite{FMPbis2025}, a study carried out in parallel with the present work. In particular, their quantum automorphism groups were computed in that paper. Nevertheless, the approach developed here can also be used to provide an alternative proof, which we now briefly outline.

First, we note that the class of block graphs is not hereditary. However, the quantum automorphism group of a connected block graph can still be determined using the same inductive procedure described above, provided that we can do the following:
\begin{itemize}
	\item Show that two block graphs are isomorphic if and only if they are quantum isomorphic.
	\item Determine vertex stabilisers of quantum automorphism groups of biconnected block graphs. 
\end{itemize}
The latter is easy to see. Indeed, it is clear that if $B$ is a block of a block graph $X$, then $\Qut(B)$ and $\Qut(B)_\alpha$, for some $\alpha \in V(B)$, are quantum symmetric groups. For the former, we have the following result from \cite[Thm 6.8]{FMPbis2025}

\begin{proposition}
	\label{prop:bolock_graphs_quant_iso}
	Two connected block graphs are quantum isomorphic if and only if they are isomorphic. 
\end{proposition}

\begin{remark}
The result above is a special case of a more general theorem proved in \cite{FMPbis2025}. However, the specific case we require can also be derived more directly using \cite[Theorem 5.2]{freslon2025block} (or a suitable modification of the argument in \cref{lem:gen-res-1}), along with the fact that two vertex-coloured complete graphs are isomorphic if and only if their underlying colourings are preserved under an isomorphism. With these ingredients, the result follows immediately.
\end{remark}

From this it is then easy to derive the following (see also \cite[Thm 6.10]{FMPbis2025}): 

\begin{theorem}
	\label{thm:q_aut_block_graphs}
	The quantum automorphism group of any block graph can be constructed from the trivial group using free products and free wreath products with quantum symmetric groups.
	In particular, the set of quantum automorphism groups of block graphs is the same as the set of quantum automorphism groups of forests.
\end{theorem}

\begin{proof}
	It follows from~\cref{prop:bolock_graphs_quant_iso} that the quantum automorphism group of block graphs can be constructed from quantum automorphism groups of vertex coloured complete graphs via free products, free wreath products with quantum symmetric groups, and free inhomogeneous wreath products. 
	
	However, quantum automorphism groups of vertex coloured complete graphs are free products of quantum symmetric groups. Hence, following along the lines of~\cref{thm:disjoint-union}, we can show that free inhomogeneous wreath products can be replaced by free products and free wreath products with $\bbS_n^+$.  
\end{proof}

\begin{remark}
	Since forests are exactly those graphs where each connected component is $K_2$, the above theorem is a generalisation of~\cite[Theorem 1.1]{Q_Aut_Trees}. 
\end{remark}

\section*{Acknowledgements}

The authors would like to thank Pascal Schweitzer and Ilia Ponomarenko for helpful discussions on $2$-WL for outerplanar graphs.

\bibliographystyle{plain}
\bibliography{inhom}

\end{document}